\documentclass[preprint,10pt]{imsart}

\usepackage{ amsmath, amsfonts, amssymb, amsthm}

\usepackage[T1]{fontenc}
\usepackage[english]{babel}
\usepackage{color}
\usepackage{geometry}
\geometry{hmargin=2.3cm,vmargin=2.8cm}

\usepackage[latin1]{inputenc}



\setcounter{secnumdepth}{2}

\fontsize{11pt}{14.5pt}\selectfont

\def\Dy#1{\Frac{\partial #1}{\partial y}}

\def\Dy_1y_1#1{\Frac{\partial^2 #1}{\partial y_1^2}}

\def\reff#1{{\rm(\ref{#1})}}

\setcounter{secnumdepth}{3}
\setcounter{tocdepth}{3}

\newtheorem{Theorem}{Theorem}[part]
\newtheorem{Definition}{Definition}[part]
\newtheorem{Proposition}{Proposition}[part]
\newtheorem{Assumption}{Assumption}[part]
\newtheorem{Lemma}{Lemma}[part]

\newtheorem{Remark}{Remark}[part]

\makeatletter \@addtoreset{equation}{section}

\@addtoreset{Definition}{section}

\@addtoreset{Theorem}{section}

\@addtoreset{Proposition}{section}

\@addtoreset{Assumption}{section}

\@addtoreset{Corollary}{section}

\@addtoreset{Lemma}{section}

\@addtoreset{Remark}{section}

\@addtoreset{Example}{section}

\makeatother \makeatletter

\def \Int{\displaystyle\int}
\def \Frac{\displaystyle\frac}
\def \Inf{\displaystyle\inf}
\def \Sup{\displaystyle\sup}
\def \Lim{\displaystyle\lim}

\def \D{\mathbb{D}}
\def \E{\mathbb{E}}
\def \F{\mathbb{F}}
\def \G{\mathbb{G}}
\def \H{\mathbb{H}}
\def \I{\mathbb{I}}
\def \L{\mathbb{L}}
\def \N{\mathbb{N}}
\def \P{\mathbb{P}}
\def \R{\mathbb{R}}
\def \Q{\mathbb{Q}}

\def \S{\mathbb{S}}
\def \W{\overleftarrow{W}}

\def \[{[\,\!\![}
\def \]{]\,\!\!]}

\def \1{{\bf 1}}

\def \proof{{\noindent \bf Proof. }}
\def \ep{\hbox{ }\hfill$\Box$}

\newcommand{\No}[1]{\left\|#1\right\|}     
\newcommand{\abs}[1]{\left|#1\right|}     
\def\reff#1{{\rm(\ref{#1})}}


\def\Bc{{\cal B}}

\def\Ec{{\cal E}}
\def\Fc{{\cal F}}
\def\Gc{{\cal G}}

\def\Lc{{\cal L}}
\def\Mc{{\cal M}}

\def\Lc{{\cal L}}
\def\Pc{{\cal P}}

\def\Yc{{\cal Y}}
\def\Zc{{\cal Z}}

\def\indi{{\bf 1}}
\def\Qc{{\cal Q}}
\def\eps{\varepsilon}

\begin{document}
\begin{frontmatter}
\title{Probabilistic interpretation for solutions of fully nonlinear stochastic PDEs}


\date{\today}

\author{\fnms{Anis}
 \snm{MATOUSSI}\corref{}\ead[label=e1]{anis.matoussi@univ-lemans.fr}\thanksref{t3}}
 \thankstext{t3}{Research partly supported by the Chair {\it Financial Risks} of the {\it Risk Foundation} sponsored by Soci\'et\'e G\'en\'erale, the Chair {\it Derivatives of the Future} sponsored by the {F\'ed\'eration Bancaire Fran\c{c}aise}, and the Chair {\it Finance and Sustainable Development} sponsored by EDF and Calyon }
\address{
 Universit\'e du Maine \\
 Institut du Risque et de l'Assurance\\
Laboratoire Manceau de Math\'ematiques\\\printead{e1}
 }
 
\author{\fnms{Dylan}
 \snm{POSSAMA\"I}\corref{}\ead[label=e2]{possamai@ceremade.dauphine.fr}}
\address{CEREMADE Universit\'e Paris--Dauphine\\PSL Research University, CNRS\\ 
75016 Paris, France\\
\printead{e2}
}

\author{\fnms{Wissal}
 \snm{SABBAGH}\corref{}\ead[label=e3]{wissal.sabbagh@univ-lemans.fr}}
\address{Universit\'e du Maine\\
 Institut du Risque et de l'Assurance\\
 Laboratoire Manceau de Math\'ematiques\\
\printead{e3}
}

\runauthor{A. Matoussi, D. Possama\"i, W. Sabbagh}

\vspace{3mm}

\begin{abstract}
In this article, we  propose a wellposedness  theory for a  class of second order backward doubly stochastic differential equation ($2$BDSDE). We prove existence and uniqueness of the solution under a Lipschitz type assumption on the generator, and we investigate the links between the  2BDSDEs and a class of  parabolic fully nonlinear Stochastic PDEs. Precisely, we  show that  the Markovian solution of  2BDSDEs provide a probabilistic  interpretation of the classical and stochastic viscosity  solution of  Fully nonlinear SPDEs.  
 
 \end{abstract}
\end{frontmatter}

\section{Introduction}
The starting point of this work is the following parabolic fully-nonlinear stochastic partial differential equation (SPDE for short)
\begin{equation}
\begin{split}
\label{SPDE1}
 du_t(x) + H(t,x,u_t(x),Du_t(,x),D^2u_t(x)) \, dt  + g(t,x,u_t(x), Du_t(x) )\circ d\W_t = 0, t\in[0,T],\ u_T=\Phi,
\end{split}
\end{equation}
where $g$ and
$H$ are given nonlinear functions. The differential term integrated with respect to $d\W_t$
refers to the backward stochastic integral against a finite-dimensional Brownian motion on some probability space $\big(\Omega, \mathcal{F},\mathbb{P}, (W_t)_{t\geq 0} \big)$. We use the backward notation because our approach is fundamentally based on the doubly stochastic framework introduced in the seminal paper by Pardoux and Peng \cite{pp1994}.

\vspace{0.3em}
\noindent The class of stochastic PDEs as in \eqref{SPDE1} and their extensions is an important one, since it arises in a number of applications, ranging from asymptotic limits of partial differential equations (PDEs for short) with rapid (mixing) oscillations in time, phase transitions and front propagation in random media with random normal velocities, filtering and stochastic control with partial observations, path--wise stochastic control theory, mathematical finance...  The main difficulties with equations like \eqref{SPDE1} are threefold.
\begin{itemize}
\item[$(i)$] Even in the deterministic case, there are no global smooth solutions in general.

\item[$(ii)$] Their fully nonlinear character seems to make them inaccessible to the classical martingale theory employed for the linear case

\item[$(iii)$] Even if smooth solutions were to exist, the equations cannot be described in a point--wise sense, because of the everywhere lack of differentiability of the Brownian paths.
\end{itemize}
\vspace{0.3em}
\noindent The starting point of the theory of SPDEs was with classical solutions in a linear context, wellposedness results having been obtained notably by Pardoux \cite{pardouxt1980stochastic}, Dawson \cite{dawson1972stochastic}, Ichikawa \cite{ichikawa1978linear} or Krylov and Rozovski{\u\i} \cite{krylov1977cauchy}. Extensions have been obtained later, notably by Pardoux and Peng \cite{pp1994} (see also Krylov and Rozovski{\u\i} \cite{krylov1981stochastic} or Bally and Matoussi \cite{BM01}) by introducing backward doubly stochastic differential equation (BDSDE for short), which allowed them to give a nonlinear Feynman--Kac's formula for the following class of semi--linear SPDEs 
\begin{equation}
\begin{split}
\label{semilinearSPDE}
 du_t(x) + \left[ \Lc u_t (x) + f (t,x,u_t(x), Du_t(x))\right] dt  + g(t,x,u_t(x), Du_t(x) )\circ d\W_t = 0 ,
\end{split}
\end{equation}
 where  $ \Lc$ is a linear second order diffusion operator and $f$ is a given nonlinear function.  The theory of BDSDE has then been extended in several directions, notably by Matoussi and Scheutzow \cite{MS02} who considered a class of BDSDE where the nonlinear noise term is given by the more general It\^o-Kunita's stochastic integral, thus allowing them to give a probabilistic interpretation of classical and Sobolev's solutions of semi--linear parabolic SPDEs driven by space-time white noise. However, the notion of viscosity solution to SPDE has remained a quite difficult and evasive subject throughout the years. Our aim in this paper, as will be detailed below, is to give a definition of a generalization of BDSDEs allowing for a probabilistic representation of solutions to fully nonlinear SPDEs, the appropriate notion of equation being that of second--order BDSDEs. Before going into details, let us review the associated literature.
 
\paragraph{Literature review for viscosity solution of SPDEs.}  Stochastic viscosity solutions for SPDEs were first introduced by Lions and Souganidis in their seminal papers \cite{lion:soug:98, lion:soug:00, lion:soug:01}. They used the so-called "stochastic characteristics" to remove the stochastic integrals from the SPDEs and thus transform them into PDEs with random coefficients. A few years later, Buckdahn and Ma \cite{buck:ma:10a,buck:ma:10b} considered a related but different class of semi--linear SPDEs, and studied them in light of the earlier results of Lions and Souganidis, giving in addition a probabilistic interpretation of such equation via BDSDE, but only in the case where the intensity of the noise $g$ \eqref{semilinearSPDE} did not depend on the gradient of the solution. They used the so--called Doss-Sussmann transformation and stochastic diffeomorphism flow technics to once more convert the semi--linear SPDEs into PDEs with random coefficients. This transformation was used again for non-standard optimal control problems in their paper \cite{buckdahn2007pathwise}, and also by Diehl and Friz \cite{diehl:friz:12} to solve semi--linear SPDEs and the associated BDSDEs driven by rough drivers. The case of fully nonlinear SPDEs was also considered by Buckdahn and Ma \cite{buckdahn2002pathwise}, still in the context of so-called stochastic viscosity solutions, and using a new kind of Taylor expansion for It\^o-type random fields. One had then to wait for ten years to see new progresses been made. Hence, Gubinelli, Tindel and Torrecilla \cite{Gubi:Tind:Torr:14} proposed a new definition of viscosity solutions to fully nonlinear PDEs driven by a rough path via appropriate notions of test functions and rough jets. These objects were defined as a controlled processes with respect to the driving rough path, and the authors showed that their notion of solution was compatible with the seminal results of Lions and Souganidis  \cite{lion:soug:98, lion:soug:00, lion:soug:01} and with the recent results of Caruana, Friz and Oberhauser \cite{Caru:Friz:Ober:11} on fully non--linear SPDEs driven with rough drivers. Independently, a series of papers involving Buckdahn, Bulla, Ma and Zhang \cite{buck2011pathwise,buckdahn2015pathwise,Buck:Ma:Zhan:15} proposed yet another alternative definition, based once more on pathwise Taylor expansions for random fields (like in \cite{buckdahn2002pathwise}), but now in the context of the functional It\^o calculus of Dupire, and by identifying the solution of the SPDE to the solution of a path-dependent PDE. Finally, the recent contribution of Friz, Gassiat, Lions and Souganidis \cite{friz2016eikonal} extends the notion of path--wise viscosity solutions for Eikonal equations having quadratic Hamiltonians.

\vspace{0.5em}
\paragraph{Literature review for $2$BSDES.}  Motivated by numerical methods for fully nonlinear PDEs (the case when $g$ is identically null in the equation \eqref{SPDE1}), second order BSDEs (2BSDEs for short) were introduced by Cheridito, Soner, Touzi and Victoir in \cite{CSTV07}. Then Soner, Touzi and Zhang \cite{STZ10} proposed a new formulation and obtained a complete theory of existence and uniqueness for such BSDEs. The main novelty in their approach is that they require that the solution verifies the equation $\P -a.s.$ for every probability measure $\P$ in a non-dominated class of mutually singular measures. This new point of view is inspired from the quasi-sure analysis of Denis and Martini \cite{deni:mart:06} who established the connection between the so-called hedging problem in uncertain volatility models and the Black--Scholes--Barrenblatt PDE (see also Avellaneda, Levy and Paras \cite{ALP95} and Lyons \cite{L95}). The latter equation is fully nonlinear and has a simple piecewise linear dependance on the second order term. Intuitively speaking (we refer the reader to \cite{STZ10} for more details), the solution to a 2BSDE with generator $F$ and terminal condition $\xi$ can be understood as a supremum in some sense of the classical BSDEs with the same generator and terminal condition, but written under the different probability measures considered. Following this intuition, a non-decreasing process $K$ is added to the solution and it somehow pushes (in a minimal way) the solution so that it stays above the solutions of the classical BSDEs. The theory being very recent, the literature remains rather limited. However, we refer the interested reader to Possama\"i \cite{poss13} and  Possama\"i and Zhou \cite{PZ13} who respectively extended these wellposedness results to generators with linear and quadratic growth, as well as the recent contribution of Possama\"i, Tan and Zhou \cite{PTZ14}, which lifts all the regularity assumptions assumed in the aforementioned works.

\vspace{0.5em}
\paragraph{Main contributions.} Our aim in this paper is to provide a complete theory of existence and uniqueness of second order BDSDEs (2BDSDEs for short) under Lipschitz-type hypotheses on the driver. In addition to the structural difficulties inherent with dealing with 2BSDEs through the quasi-sure analysis, the presence of two sources of randomness in the 2BDSDEs, which are mixed through the nonlinear coefficients of the equation, makes our study even more complex. In particular, we have to be extremely prudent when defining the probabilistic structure allowing for what is now commonly known as volatility uncertainty, in order to consider SPDEs with a nonlinearity with respect to the second-order space derivative. Note that, one of main difficulties with 2BDSDEs and BDSDEs is the extra backward integral term, which prevents us from obtaining path--wise estimates for the solutions, unlike what happens with BSDEs or 2BSDEs. This introduces additional non trivial difficulties. The same type of problems were already pointed out in \cite{BGM15}, where the authors analyze regression schemes for approximating BDSDEs as well as their convergence, and obtain non-asymptotic error estimates, conditionally to the external noise (that is W in our context). Similarly to the classical 2BSDEs, the solution of a 2BDSDE has to be represented as a supremum of solutions to standard BDSDEs. We therefore follow the original approach of Soner, Touzi and Zhang \cite{STZ10} by constructing the solution path--wise, using the so-called regular conditional probability distribution. We point out that since in our context the value process is a random field depending on two source of randomness, we get a dynamic programming principle without regularity on the terminal condition and the generator following the approach of Possama\"i, Tan and Zhou \cite{PTZ14}. This is different from the classical $2$BSDEs where regularity result for the value process, precisely the uniform continuity with respect to the trajectory of the fundamental noise, was crucial to prove their dynamic programming principle (Proposition 4.7 in \cite{sone:touz:zhan:13}).
Moreover, under regularity conditions on the coefficients, we show that classical solution of the fully nonlinear SPDE (\ref{SPDE1}) can be obtained via the associated Markovian $2$BDSDEs, thus extending the Feynman--Kac's formula to this context. Finally, we introduce the notion of stochastic viscosity solution for the fully non--linear SPDEs (\ref{SPDE1}) in the case where the intensity of the noise $g$ in the SPDE \eqref{SPDE1} does not depend on the gradient of the solution. This restriction is due to our approach based on the Doss--Sussmann transformation to convert fully nonlinear SPDEs to fully nonlinear PDEs with random coefficients. Let us conclude by insisting on one of the main implications of our results. It is our conviction that they open a new path for possible numerical simulations of solution to fully--nonlinear SPDEs. Indeed, numerical schemes for classical BDSDEs are by now well--known (see for instance \cite{BGM15}), and numerical procedures for solving second--order BSDEs have also been successfully implemented in the recent years, see for instance Possama\"i and Tan \cite{possamai2015weak} or Ren and Tan \cite{ren2015convergence}. Combining these two approaches should in principle allow to obtain efficient numerical schemes for computing solutions to 2BDSDEs, and therefore for fully non--linear SPDEs. As far as we know,  there are no literature on the subject, except the cases of  semilinear and quasilinear SPDEs (see  \cite{GGK15}, \cite{GK10}, \cite{GK11}, \cite{matouetal13}, \cite{BGM15}), and so our results could prove to be a non--negligible progress.

\vspace{0.5em}
\paragraph{Structure of the paper.} The paper is organized as follows. In Section 2, we recall briefly some notations, introduce the probabilistic structure on the considered product space allowing to choose the adequate set of measures, provide the precise definition of 2BDSDEs and show how they are connected to classical BDSDEs. Then, the aim of Section 3 is to prove the uniqueness of solution for 2BDSDEs, as a direct consequence of a representation theorem, which intuitively originates from the stochastic control interpretation of our problem. The proof of this representation is based on Lemma \ref{eq:mincond}. In this section, we prove also a priori estimates for 2BDSDEs. Section 4 is devoted to the existence result for solution of 2BDSDEs  by a path--wise construction on the shifted Wiener space. Since in our context, the value process is a random field depending on two source of randomness, we prove its regularity result in Lemma \ref{mesurabiliteV}. Once again, we cannot obtain the same regularity in the context of doubly stochastic 2BSDEs because we cannot have path--wise estimates for their solutions. In Section 5, we specialize our discussion to the Markovian context, and we give in Theorem \ref{representationsolutionclassique:theorem} the Feynman--Kac's formula for classical solution of such SPDEs. Then, we introduce the notion of stochastic viscosity solution and give in Theorem \ref{representationsolutionviscosité:theorem} the probabilistic representation for such solution, which is in our knowledge the first result of the kind for such class of fully nonlinear SPDEs. Finally, the Appendix collects several technical results needed for the existence of the solution of the 2BDDSEs and SPDEs. 

\vspace{0.5em}
\paragraph{Notations:} For any $n\in\mathbb N\backslash\{0\}$, we will denote by $x\cdot y$ the usual inner product of two elements $(x,y)$ of $\mathbb R^n$, and by $\|.\|$ the associated Euclidean norm when $n\geq 2$ and by $|\cdot|$ when $n=1$. Furthermore, for any $n\times n$ matrix with real entries $M$, $M^\top$ will denote its usual transpose. We abuse notations and also denote by $\No{\cdot}$ a norm on the space of square matrices with real entries. Moreover, $\S_n^{>0}$ will denote the space of all $n\times n$ positive definite matrices with real entries. For any topological space $E$, $\mathcal B(E)$ will denote the associated Borel $\sigma-$field.

\section{Preliminaries and assumptions}
\label{Preliminaries and Hypothesis }
Let us fix a positive real number $T>0$, which will be our finite time horizon, as well as some integer $d\geq 1$. We shall work on the product space $\Omega := \Omega^B\times\Omega^{W}$ where

\vspace{0.5em}
\noindent $\bullet$ $\Omega^B$ is the canonical space of continuous functions on $[0,T]$ vanishing at $0$, equipped with the uniform norm $\|\cdot\|_{\infty}$. $B$ will be the canonical process on $\Omega^B$, and $\P_B^{0}$ the Wiener measure on $(\Omega^B,\mathcal F^B)$, where $\mathcal F^B$ is the Borel $\sigma$-algebra. Generically, we will denote by $\omega^B$ an element of $\Omega^B$.

\vspace{0.5em}
\noindent
$\bullet$ $(\Omega^W, \Fc^{W}, \P^0_{W})$ is a copy of $(\Omega^B,\Fc^B)$ whose canonical process is denoted by $W$, and whose Wiener measure is denoted bu $\P_0^W$. Generically, we will denote by $\omega^W$ an element of $\Omega^W$, and the notation $\omega$ will be solely reserved for elements $\omega:=(\omega^B,\omega^W)$ of $\Omega$.

\vspace{0.5em}
\noindent
We equip the product space $\Omega$ with the product $\sigma$-algebra $\Fc= \Fc^{B}\otimes\Fc^{W}$, and define $\P_0:= \P_B^0\otimes\P^0_{W}$. Let then $\F^{o,W}:=\{\Fc_{t,T}^{o,W}\}_{t\geq 0}$ and $\F^{W}:=\{\Fc_{t,T}^{W}\}_{t\geq 0}$ be respectively the natural and the augmented (under $\P^0_W$) retrograde filtration generated by $W$, defined by 
$$\Fc_{s,t}^{o,W}:=\sigma\{W_{r}-W_{s} , s\leq r\leq t\},\ \Fc_{s,t}^W:=\sigma\{W_{r}-W_{s} , s\leq r\leq t\}\vee \mathcal N^{\P^0_W}(\mathcal F^W),$$
where 
$$\mathcal N^{\P^0_W}(\mathcal F^W):=\left\{A\in\Omega^W,\ \exists\ \tilde A\in\Fc^W,\ A\subset \tilde A\ \text{and}\ \P^0_W(\tilde A)=0\right\}.$$
We also denote for simplicity $\Fc_{T}^{W}:=\Fc_{0,T}^{W}$. Similarly, we let $\F^{B}:=\{\Fc_{t}^{B}\}_{t\geq 0}$ be the forward (raw) filtration generated by $B$, that is $\Fc_t^{B}:=\sigma\{B_r, 0\leq r\leq t\}$ (we remind the reader that it is a classical result that in this case $\mathcal F^B=\mathcal F^B_T$). We also consider its right limit $\F^{B}_{+}:=\{\Fc_{t^+}^{B}\}_{t\geq 0}$. Finally, for each $t\in[0,T]$, we define
 $$\Fc_t:= \Fc_t^{B}\vee \Fc_{t,T}^{W}\quad \textrm{and}\quad \Gc_t:= \Fc_t^{B}\vee\Fc_{T}^{W}.$$
The collection $\F=(\Fc_t)_{ 0\leq t\leq T}$ is neither increasing nor decreasing and therefore does not constitute a filtration. However, $\G=(\Gc_t)_{ 0\leq t\leq T}$
is a filtration.

\vspace{0.5em}
\noindent For technical reasons related to the main result of Nutz \cite{N12}, we will work under the set theoretic model of ZFC (Zermelo--Fraenkel plus the axiom of choice) as well as any additional axiom ensuring the existence of medial limits in the sense of Mokobodzki (see \cite{F84}, statement $22$O(l) page $55$ for models ensuring this).

\subsection{A special family of measures on $(\Omega, \mathcal F)$}
In order to be able to consider SPDEs with a nonlinearity with respect to the second-order space derivative, we will need to consider a probabilistic structure allowing for what is now commonly known as volatility uncertainty. This basically means that we will allow the probability measure we consider on $(\Omega^B,\mathcal F^B)$ to change. Such an approach has been initiated with the name of quasi-sure stochastic analysis by Denis and Martini \cite{deni:mart:06} and has since proved very successful (see among others \cite{sone:touz:zhan:13, STZ10, Nutz12}). Following this approach, we say that a probability measure $\P_B$ on $(\Omega^B,\Fc^B)$ is a local martingale measure if the canonical process $B$ is a  local martingale under $\P_B$. We emphasize that by using integration by parts as well as the path--wise stochastic integration of Bichteler (see Theorem 7.14 in \cite{B81} or the more recent article of Karandikar \cite{K95}), we can give a path--wise definition of the quadratic
variation $\langle B\rangle_t$ and its density with respect to the Lebesgue measure $\widehat{a}_t$, by
$$\widehat{a}_t:=\underset{\epsilon \downarrow 0}{\overline{\Lim}}\
\Frac{1}{\epsilon} (\langle B\rangle_t-\langle B\rangle_{t-\epsilon}).$$
where the $\overline{\Lim}$ has to be understood in a component--wise sense.

\vspace{0.3em}
\noindent For practical purposes, we will restrict our attention to the set $\overline{\Pc}_S$ consisting of all probability measures 
$$\P(A):=\int_{\Omega^W}\int_{\Omega^B}{\bf 1}_{(\omega^B,\omega^W)\in A}d\P^{\alpha,\omega^W}(\omega^B)d\P^0_{W}(\omega^W),\ A\in\Fc,$$
such that for any $\omega^W\in\Omega^W$
\begin{align*} 
\P^{\alpha,\omega^W} :=
\P_{B}^0\circ(X^{\alpha,\omega^W})^{-1},\ \text{where} ~ X^{\alpha,\omega^W}_t :=
\Int_0^t \alpha_s^{1/2}(\cdot,\omega^W)dB_s\,, \,t\in[0,T]\,,\,
 \P_B^0-a.s.,
\end{align*} 
for some $\F$-adapted process $\alpha$ taking values in $\S_d^{>0}$ and satisfying for any $\omega^W\in\Omega^W$, $$\Int_0^T \|\alpha_t(\cdot,\omega^W)\|dt < \infty,\  \P_B^0 -a.s.$$
We emphasize here that by the classical results of Stricker and Yor \cite{SY78} on stochastic integration with a parameter, that we can always assume without loss of generality that the map
$$(t,\omega^B,\omega^W)\longmapsto \left( \int_0^t\alpha_s(\cdot,\omega^W)^{1/2}dB_s\right)(\omega^B),$$
is $\mathcal B([0,t])\otimes\mathcal F_t-$measurable. This implies in particular that the family $(\P^{\alpha,\omega^W},\ \omega^W\in\Omega^W)$ is a stochastic kernel (see for instance Definition 7.12 in \cite{BS78}).
The set $\overline{\Pc}_S$ has several nice properties, which can be deduced from similar results in \cite{sone:touz:zhan:11a}.

\begin{Lemma}
Every $\P\in\overline{\Pc}_S$ satisfies the martingale representation property, in the sense that for any square integrable $(\P, \G)-$martingale $M$, there exists a unique $\G-$predictable process $Z$ such that
$$M_t=M_0+\int_0^tZ_s\cdot dB_s,\ \P-a.s.$$
Moreover, they satisfy the Blumenthal $0-1$ law, and in particular, any $\mathcal G_{0^+}$ or $\mathcal F_{0^+}-$measurable random variable is deterministic with respect to $\Omega^B$, that is its randomness only comes from $\Omega^W$.
\end{Lemma}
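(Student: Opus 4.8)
The plan is to reduce both assertions to their fiber-wise counterparts over $\Omega^W$ and then to invoke the corresponding results of Soner, Touzi and Zhang \cite{sone:touz:zhan:11a}. The crucial structural observation is that $\Gc_0=\Fc_0^B\vee\Fc_T^W=\Fc_T^W$ up to $\P_0$-null sets (since $B_0=0$), so that the entire randomness carried by $W$ is already $\Gc_0$-measurable and acts as a frozen initial parameter. Disintegrating any $\P\in\overline{\Pc}_S$ along the stochastic kernel $(\P^{\alpha,\omega^W})_{\omega^W\in\Omega^W}$ yields, for $\P^0_W$-almost every $\omega^W$, a measure $\P^{\alpha,\omega^W}$ on $(\Omega^B,\Fc^B)$ under which $B$ is a square-integrable $\F^B$-martingale with absolutely continuous quadratic variation, i.e. a measure to which the results of \cite{sone:touz:zhan:11a} apply. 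A first consequence is that $B$ is a $(\P,\G)$-martingale: for $s<t$, conditioning on $\Fc_T^W$ freezes $\omega^W$ and reduces $\E^\P[B_t-B_s\mid\Gc_s]$ to $\E^{\P^{\alpha,\omega^W}}[B_t-B_s\mid\Fc_s^B]=0$, whence the computation over $\Gc_s=\Fc_s^B\vee\Fc_T^W$ follows by the tower property.

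For the martingale representation property, let $M$ be a square-integrable $(\P,\G)$-martingale. Fixing $\omega^W$, the map $\omega^B\mapsto M_t(\omega^B,\omega^W)$ is $\Fc_t^B$-measurable, and by the same disintegration it is an $\F^B$-martingale under $\P^{\alpha,\omega^W}$, square-integrable for $\P^0_W$-a.e. $\omega^W$ by Fubini. Applying the fiber-wise representation theorem of \cite{sone:touz:zhan:11a} produces an $\F^B$-predictable process $Z^{\omega^W}$ with
$$M_t(\cdot,\omega^W)=M_0(\cdot,\omega^W)+\Int_0^t Z_s^{\omega^W}\cdot dB_s,\quad \P^{\alpha,\omega^W}-a.s.$$
It then remains to aggregate the family $(Z^{\omega^W})_{\omega^W}$ into a single $\G$-predictable process $Z$, and this measurable selection is the main obstacle: one must argue that $\omega^W\mapsto Z^{\omega^W}$ admits a jointly measurable version. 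I would obtain this from the explicit $L^2$-limit formula defining the stochastic integral together with the Stricker--Yor results on stochastic integration with a parameter already used in the construction of $\overline{\Pc}_S$, producing a $\mathcal B([0,t])\otimes\Gc_t$-measurable, hence (after the usual modification) $\G$-predictable, aggregator $Z$. Uniqueness follows fiber-wise from the uniqueness of the integrand under $\P^{\alpha,\omega^W}$ and the non-degeneracy $\alpha\in\S_d^{>0}$.

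For the Blumenthal $0$--$1$ law, let $A\in\Gc_{0^+}=\bigcap_{t>0}(\Fc_t^B\vee\Fc_T^W)$. For each $t>0$ and each $\omega^W$, the section $A_{\omega^W}:=\{\omega^B:(\omega^B,\omega^W)\in A\}$ belongs to $\Fc_t^B$; since $A$ lies in $\Fc_t^B\vee\Fc_T^W$ for every $t$, it follows that $A_{\omega^W}\in\bigcap_{t>0}\Fc_t^B=\Fc_{0^+}^B$, which is $\P^{\alpha,\omega^W}$-trivial by the fiber-wise Blumenthal $0$--$1$ law of \cite{sone:touz:zhan:11a}. Hence $\P^{\alpha,\omega^W}(A_{\omega^W})\in\{0,1\}$ for $\P^0_W$-a.e. $\omega^W$, and setting $A':=\{\omega^W:\P^{\alpha,\omega^W}(A_{\omega^W})=1\}\in\Fc_T^W$, which is measurable by the stochastic kernel property of $(\P^{\alpha,\omega^W})_{\omega^W}$, one checks via Fubini that $\P\big(A\,\triangle\,(\Omega^B\times A')\big)=0$. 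Thus $\indi_A$ agrees $\P$-a.s. with the $\Fc_T^W$-measurable indicator $\indi_{A'}$, i.e. its randomness only comes from $\Omega^W$. The identical argument handles $A\in\Fc_{0^+}=\bigcap_{t>0}(\Fc_t^B\vee\Fc_{t,T}^W)$, since $\Fc_{t,T}^W\subset\Fc_T^W$ forces $A\in\Gc_{0^+}$ and the sections are again trivialized fiber-wise.
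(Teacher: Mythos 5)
Your proof follows essentially the same route as the paper's: disintegrate $\P$ along the stochastic kernel $(\P^{\alpha,\omega^W})_{\omega^W}$ using the Stricker--Yor results on integration with a parameter, reduce both claims to their fiber-wise counterparts under $\P^{\alpha,\omega^W}$, and invoke the martingale representation theorem and the Blumenthal $0$--$1$ law of \cite{sone:touz:zhan:11a}. If anything, you supply more detail than the paper on the two points it glosses over --- the aggregation of the fiber-wise integrands $Z^{\omega^W}$ into a single $\G$-predictable process, and the section/Fubini argument producing the $\Fc_T^W$-measurable modification --- so the proposal is correct.
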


\proof 
Let $M$ be a square integrable $(\P, \G)-$martingale. We have
$$M_s(\omega^B,\omega^W)=\E^\P\left[\left.M_t\right|\mathcal G_s\right](\omega^B,\omega^W),\ 0\leq s\leq t\leq T,\ \P-a.e.\ (\omega^B,\omega^W)\in\Omega.$$
However, using again the result of Stricker and Yor \cite{SY78}, this can be rewritten, for $\P_0^W-a.e.$ $\omega^W\in\Omega^W$, as
$$M_s(\cdot,\omega^W)=\E^{\P^{\alpha,\omega^W}}\left[\left.M(\cdot,\omega^W)\right|\mathcal F_t^B\right](\cdot),\ \P^{\alpha,\omega^W}-a.s.,$$
which therefore implies that for $\P_0^W-a.e.$ $\omega^W\in\Omega^W$, $M(\cdot,\omega^W)$ is a $(\P^{\alpha,\omega^W},\F^B)-$martingale, to which we can then apply the result of \cite{sone:touz:zhan:11a} to obtain the required martingale representation.

\vspace{0.5em}
\noindent The exact same reasoning gives us the second desired result, using again the fact that by \cite{sone:touz:zhan:11a}, the probability measures $\P^{\alpha,\omega^W}$ satisfy the Blumenthal $0-1$ law.
\ep
 
\vspace{0.5em}

\begin{Remark}
We recall from {\rm \cite{sone:touz:zhan:13}} that for a fixed $\P\in\overline{\Pc}_S$, we have from the Blumenthal zero-one law that $\E^\P[\xi|\Gc_t]=\E^\P[\xi|\Gc_{t^+}],\, \P- a.s.$ for any $t\in[0,T]$ and $\P-$integrable $\xi$. In particular, this implies immediately that any $\Gc_{t^+}-$measurable random variable has a $\Gc_t-$measurable $\P-$modification. Furthermore, if $\F^{W,o}$ denotes the raw backward filtration of the Brownian motion $W$, then any $\Gc_{t^+}-$measurable random variable also admits a $\F^B_t\vee\Fc^{W,o}_T-$measurable $\P-$modification. We will often implicitly work with such modifications $($which of course depend on the considered measure $\P)$.
\end{Remark}

\noindent We finish this section with the following definition.
\begin{Definition} For any subset $\mathcal Q$ of $\overline{\Pc}_S$, we say that a property holds $\mathcal Q-$quasi-surely $(\Qc-q.s.$ for short$)$ if it holds $\P-a.s.$ for all
$\P\in\Qc$.
\end{Definition}

\subsection{The non-linearity}
To introduce the non-linearity in the SPDEs we consider, we have to take a small detour, and start by introducing a map $H_t(w,y,z,\gamma):[0,T]\times\Omega\times\R\times\R^d\times D_H\longrightarrow \R$, where
$D_H\subset \R^{d \times d}$ is a given subset containing $0$. As is usual in any stochastic control problem, the following Fenchel conjugate of $H$ with respect to $\gamma$ will play an important role
$$F_t(w,y,z,a) := \underset{\gamma\in D_H}{\Sup} \left\{\Frac{1}{2} Tr(a\gamma) - H_t(w,y,z,\gamma)\right\}~\text{for}~ a\in \S_d^{>0}.$$
For ease of notations, we also define
$$\widehat{F}_t(y,z) := F_t(y,z,\widehat{a}_t)~\text{and}~ \widehat{F}_t^0 := \widehat{F}_t(0,0),$$
where as usual we abuse notations and suppress the dependence on $\omega\in\Omega$ when it is not important (which will not be the case in all the article). Since $F$ may not be always finite, we denote by $D_{F_t(y,z)} := \{a,~ F_t(w,y,z,a) < +\infty\}$ the domain of $F$ in $a$ for a fixed $(t,w,y,z)$.

\vspace{0.5em}
\noindent We will also consider a function $g_t(\omega,y,z):[0,T]\times\Omega\times\R\times\R^d\longrightarrow \R^d$ and denote $g_t^0:= g_t(0,0)$, with the same convention as above on the $\omega-$dependence. The maps $\widehat F$ and $g$ are intended to play the role of the generators of the doubly stochastic BSDEs we will consider later on. Since the theory of BDSDEs is an $L^2$-type theory, we need to restrict again the class $\overline{\Pc}_S$ to account for these integrability issues. 
\begin{Definition}\label{defP}
$\Pc$ is the collection of all $\P\in\overline{\Pc}_S$ such that
$$\underline{a}_{\P}\leq \widehat{a}\leq\overline{a}_{\P} ,~dt\times d\P-a.e.~ \text{for some}~ \underline{a}_{\P},\overline{a}_{\P}\in\S_d^{>0},$$
$$ \E^{\P}\left[\left(\Int_0^T |\widehat{F}_t^0|^{2} dt\right)\right] <+\infty~, ~ \E^{\P}\left[\left(\Int_0^T \|g_t^0\|^{2}
dt\right)\right] <+\infty.$$
\label{def}
\end{Definition}
\noindent The first condition in Definition \ref{defP} ensures that the process $B$ is actually a square-integrable martingale under any of the measures in $\Pc$, and not only a local-martingale, while the second condition is here to ensure wellposedness of the BDSDEs which will be defined below. Of course, these integrability assumptions will not be enough and need to be complemented with further assumptions on the functions $F$ and $g$ that we now list
\begin{Assumption} \label{ass}
\begin{itemize}
\item[\rm{(i)}] $\Pc$ is not empty, and the domain $D_{F_t(y,z)}=:D_{F_t}$ is actually independent of $(\omega,y,z)$.
\item[\rm{(ii)}] For fixed $(t,y,z,a)\in[0,T]\times\mathbb R\times\mathbb R^d\times D_{F_t}$, $F_t(\cdot,y,z,a)$ is $\Fc_t-$measurable, and $g$ is $\Fc_t-$measu-rable as well.
\item[\rm{(iii)}] There is $C > 0$ and $0\leq \alpha <1$ s.t. $ \forall (y,y',z,z',t,a,\omega ) \in \mathbb R \times \mathbb R \times \mathbb R^d  \times \mathbb R^d \times [0,T] \times D_{F_t}\times\Omega  $, 
$$ ~ |F_t(\omega ,y,z,a)-F_t(\omega,y',z',a)|\leq C\left(|y-y'|+\|a^{1/2}(z-z')\|\right),$$
$$~ \|g_t(\omega ,y,z)-g_t(\omega,y', z')\|^2\leq C |y-y' |^2+\alpha\|z-z'\|^2$$
\item[\rm{(iv)}] There exists a constant $\lambda\in[0,1[$ such that
$$(1-\lambda)\widehat{a}_t \geq \alpha I_d,\ dt\times \Pc-q.e. $$
\item[\rm{(v)}] $F$ and $g$ are uniformly continuous in $\omega$ for the $\|. \|_\infty$ norm on $\Omega$.
\end{itemize}
\end{Assumption}
\begin{Remark}
The assumptions $(i)$ and $(ii)$ are classic in the second order framework, see {\rm \cite{STZ10}}. The Lipschitz assumption $(iii)$ is standard in the BSDE theory since the paper {\rm\cite{PP90}}. The contraction condition satisfied by $g$ with respect to the variable $z$ $($i.e  $ 0 \leq  \alpha < 1)$ and assumption $(iv)$ are necessary for the wellposedness of  
our second order BDSDEs.  These type of conditions are well known $($see e.g. {\rm\cite{pardouxt1980stochastic}}$)$ for the semilinear stochastic PDE \eqref{semilinearSPDE} to be a well-posed stochastic parabolic equation. The last hypothesis $(v)$ is proper to the second order framework, it is linked to our intensive use of regular conditional probability distributions $($r.c.p.d.$)$ in our existence proof, and to the fact that we construct our solutions pathwise, thus avoiding complex issues related to negligible sets.
\end{Remark}

\subsection{Important spaces and norms}
For the formulation of the second order BDSDEs, we will use the same spaces and norms (albeit with some modifications, in particular concerning the measurability assumptions) as the one introduced for second order BSDEs in \cite{STZ10}.

\vspace{0.3em}
\noindent $\bullet$ For $p\geq 1$, $L^{p}$ denotes the space of all $\Fc_T-$measurable scalar r.v. $\xi$ with
$$\|\xi\|^p_{L^{p}}:= \underset{\P\in\Pc}{\Sup}\,
\E^{\P}[|\xi|^p] < +\infty .$$
$\bullet$ $\H^{p}$ denotes the space of all $\R^d-$valued processes $Z$ which are $\G-$predictable and s.t. $Z_t$ is $\underset{\P\in\Pc}{\bigcap}\overline{\Fc}^\P_t-$measurable for a.e. $t\in[0,T]$, with
$$\|Z\|^p_{\H^{p}}:= \underset{\P\in\Pc}{\Sup}\, \E^{\P}\left[\left(\Int_0^T \|\widehat{a}^{1/2}_tZ_t\|^2 dt\right)^{\frac{p}{2}}\right]< +\infty .$$
$\bullet$ $\D^{p}$ denotes the space of $\R-$valued processes $Y$, which are $\G-$progressively measurable, s.t. $Y_t$ is $\Fc_{t^+}^B\vee\Fc_{t,T}^W-$ measurable for every $t\in[0,T]$, with
$$\Pc-q.s.\text{ c\`adl\`ag paths, and } \ \|Y\|^p_{\D^{p}}:= \underset{\P\in\Pc}{\Sup}\,\E^{\P}\left[\underset{0\leq t\leq T}{\Sup}\,|Y_t|^p \right]< +\infty .$$
$\bullet$ $\I^{p}$ denotes the space of all $\R-$valued and $\G-$progressively measurable processes $K$ null at $0$, s.t. $K_t$ is $\underset{\P\in\Pc}{\bigcap}\overline{\Fc}^\P_t-$measurable for every $t\in[0,T]$, with
$$\Pc-q.s.\text{ c\`adl\`ag and nondecreasing paths, and  } ~ \|K\|^p_{\I^{p}}:= \underset{\P\in\Pc}{\Sup}\,\E^{\P}\Big[K_T^p \Big]< +\infty .$$
$\bullet$ For each $\xi\in L^{1},\P\in\Pc$ and $t\in[0,T]$, we denote by $\E_t^{\Pc,\P}[\xi]
:=\underset{\P^{'}\in\Pc(t^+,\P)}{\rm ess \, sup^{\P}}\,
\E_t^{\P^{'}}[\xi],$  
with 
$$\Pc(t^+,\P):=\{\P^{\prime}\in\Pc\,,\,\P^{\prime} = \P~\text{on}~ \Gc_{t^+} \},$$
where $ \E_t^{\P}[\xi]:= \E^{\P}[\xi|\Gc_t]=\E^{\P}[\xi|\Fc_t^{B}\vee \Fc_{T}^{W}] ,~ \P-a.s.$
Then we define for each $p\geq 2$,
$$\L^{p} := \{\xi\in L^{p} : \|\xi\|_{\L^{p}} < +\infty\},$$
where $$\|\xi\|^p_{\L^{p}}:=  \underset{\P\in\Pc}{\Sup}\,
\E^{\P}\Big[\underset{0\leq t\leq T}{\rm ess \, sup^{\P}}\,\big(\E_t^{\Pc,\P}[|\xi|^{2}]\big)^{\frac{p}{2}}\Big].$$
Finally, we denote by ${\rm UC}_b(\Omega)$ the collection of all bounded and uniformly continuous maps $\xi:\Omega \longrightarrow \R$ for the $\|.\|_\infty-$norm, and we let $\Lc^{p}$ be the closure of ${\rm UC}_b(\Omega)$ under the norm $\|.\|_{\L^{p}}$, for every $p\geq 2$.
\subsection{Definition of the 2BDSDE and connection with standard BDSDEs}
We shall consider the following second order backward doubly stochastic differential equation (2BDSDE for short) 
\begin{align}
Y_t = \xi+\Int_t^T\widehat{F}_s(Y_s,Z_s)ds+\Int_t^T g_s(Y_s,Z_s)\cdot d\W_s-\Int_t^T Z_s \cdot dB_s+ K_T-K_t.
\label{eq} 
\end{align} 
We note that the integral with respect to $W$ is a "backward It\^o integral" (see \cite{K90}, pages 111--112) and the integral with respect to $B$ is a standard forward It\^o integral. For any $\P\in\Pc$, $\G-$stopping
time $\tau$, and $\Gc_{\tau}-$measurable random variable $\xi\in\L^2(\P)$, let
$(y^{\P},z^{\P}):=(y^{\P}(\tau,\xi),z^{\P}(\tau,\xi))$ denote the unique solution to the following BDSDE 
\begin{align}
 y^{\P}_t= \xi+\Int_t^{\tau} \widehat{F}_s(y^{\P}_s,z^{\P}_s)ds+\Int_t^{\tau} g_s(y^{\P}_s,z^{\P}_s)\cdot d\W_s -\Int_t^{\tau}z^{\P}_s \cdot dB_s, \ 0\leq t\leq T,\ \P -a.s.
 \label{BDSDE}
\end{align}
Let us point out immediately that wellposedness of a solution is not an immediate consequence of the classical result of Pardoux and Peng \cite{pp1994}. Indeed, in our setting the two martingales $W$ and $B$ are not independent. In our case, the only argument in \cite{pp1994} which does not go through {\it mutatis mutandis} is the one 
 at the end of their proof of their Proposition 1.2 which proves that the solution $(y^\P_t,z^\P_t)$ is actually $\Fc_{t^+}^B\vee\Fc_{t,T}^W-$measurable. However, by Lemma \ref{BDSDEshift} and Step 1 of its proof, in particular \eqref{eq:tructruc}, the required measurability becomes clear.
 
 \vspace{0.5em}
 \noindent We can now give the definition of a solution to a 2BDSDE. 
\begin{Definition}\label{def:def} For $\xi\in \L^{2}$, $(Y,Z,K)\in\D^{2}\times\H^{2}\times\I^2$ is a solution to the 2BDSDE \eqref{eq} if \eqref{eq} is satisfied $\Pc-q.s.$, and if the process $K$ satisfies the following minimality condition
  \begin{align}
  K_t =\underset{\P^{'}\in\Pc(t^+,\P)}{\rm ess \, inf^{\P}}\E_t^{\P^{'}}[K_T], ~\P-a.s.~\text{for all}~ \P\in\Pc, \ t\in[0,T]. \label{eqmin}
  \end{align}
\end{Definition}

\begin{Remark}
We emphasize that we should normally make the dependence of $K$ in the measure $\P$ explicit, since the two stochastic integrals on the right--hand side of \eqref{eq} are, {\it a priori}, only defined $\P-a.s.$ However, we are in a context where we can use the main aggregation result of {\rm \cite{N12}} to always define an universal version of these integrals.
\end{Remark}

\noindent Before closing this subsection, we highlight the fact that Definition \ref{def:def} contains the classical theory of BDSDEs. Indeed, let $f$ be the following linear function of $\gamma$
$$f_t(y,z,\gamma) = \Frac{1}{2}I_d:\gamma - \hat{f}_t(y,z),$$
where $I_d$ is the identity matrix in $\R^d$. Then, we verify immediately that $D_{F_t(w)}=\{I_d\},$ $\widehat{F}_t(y,z) =
\hat{f}_t(y,z)$ and $\Pc=\{\P_0\}$.
In this case, the minimum condition
(\ref{eqmin}) implies
$$ 0=K_0=\E^{\P_0}[K_T]\ \text{and thus}\ K=0,\  \P_0-a.s.,$$
since $K$ is nondecreasing. Hence, the 2BDSDE (\ref{eq}) is equivalent to the following BDSDE: 
\begin{align*} 
Y_t=\xi+\Int_t^T \hat{f}_s(Y_s,Z_s)ds+\Int_t^T g_s(Y_s,Z_s)\cdot d\W_s-\Int_t^T Z_s\cdot  dB_s, 0\leq
t\leq T,~\P_0-a.s. \end{align*}
In addition to Assumption \ref{ass}, we will need to assume the following stronger integrability conditions
\begin{Assumption}\label{ass1}
The processes $\widehat{F}^0$ and $g^0$ satisfy the following integrability conditions for some $\varepsilon>0$
\begin{align*}
\phi^{2,\eps} :=\underset{\P\in\Pc}{\Sup}\,\E^{\P}\left[\Int_0^T |\widehat{F}_s^0|^{2+\varepsilon} ds\right] <+\infty,\ \psi^{2,\eps} :=\underset{\P\in\Pc}{\Sup}\,\E^{\P}
\left[\Int_0^T \|g_s^0\|^{2+\varepsilon} ds\right]<+\infty.
\end{align*}
\end{Assumption}
\noindent Finally, we will see later on in our proof of a priori estimates for the solution of the $2$BDSDE \eqref{eq}, that we actually need to have $L^{2+\varepsilon}-$type estimates for the solutions $(y^\P,z^\P)$ of the corresponding BDSDEs. For this reason, we need to also consider the following, which already appeared as Assumption (H.2) in \cite{pp1994}
\begin{Assumption}\label{assumtion g}
There exist $c>0$ and $0\leq \beta < 1$ such that for all $(t,y,z)\in [0,T]\times\R\times\R^{d}$
$$(g_tg_t^\top) (y,z)\leq c (1+y^2)I_d+ \beta zz^\top.$$
\end{Assumption}
\section{Uniqueness of the solution and estimates}
\label{Uniqueness of the solution and other properties}
\subsection{Representation and uniqueness of the solution}
The aim of this section is to prove the uniqueness of solution for $2$BDSDEs \eqref{eq}, as a direct consequence of a representation theorem, which intuitively originates from the stochastic control interpretation of our problem. As it will become more and more apparent, one of main difficulties with $2$BDSDEs and BDSDEs is the extra backward integral term, which prevents us from obtaining pathwise estimates for the solutions, unlike what happens with BSDEs or 2BSDEs. This introduces additional non trivial difficulties. The same type of problems were already pointed out in \cite{BGM15}, where the authors analyze regression schemes for approximating BDSDEs as well as their convergence, and obtain non-asymptotic error estimates, conditionally to the external noise (that is $W$ in our context). We start with a revisit of the minimality condition \reff{eqmin}.

\begin{Lemma}\label{eq:mincond}
The minimum condition \reff{eqmin} implies that
$$\underset{\P^{\prime}\in\Pc(t^+,\P)}{\inf}\E^{\P^{\prime}}\left[K_T-K_t\right]=0.$$
\end{Lemma}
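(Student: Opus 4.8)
The plan is to derive the claimed equality from the minimality condition \reff{eqmin} by combining a lower bound (which is immediate) with a matching upper bound obtained through the definition of the essential infimum. Let me sketch the argument.

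First I would record the trivial inequality. Since each $\P' \in \Pc(t^+,\P)$ agrees with $\P$ on $\Gc_{t^+}$, and $K$ is $\Gc-$progressively measurable and nondecreasing with $K_t \le K_T$, we have $\E^{\P'}[K_T - K_t] \ge 0$ for every such $\P'$. Hence $\inf_{\P' \in \Pc(t^+,\P)} \E^{\P'}[K_T - K_t] \ge 0$, and it only remains to prove the reverse, i.e. that this infimum is $\le 0$.

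For the upper bound, the key observation is that $K_t$ is (a $\P-$modification of) a $\Gc_{t^+}-$measurable, hence $\Gc_t-$measurable, random variable, and that any $\P' \in \Pc(t^+,\P)$ coincides with $\P$ on $\Gc_{t^+}$, so that $\E^{\P'}_t[K_t] = K_t = \E^{\P}_t[K_t]$, $\P-$a.s. I would start from the minimality condition \reff{eqmin}, which reads
\[
K_t = \underset{\P' \in \Pc(t^+,\P)}{\mathrm{ess\,inf}^{\P}}\, \E^{\P'}_t[K_T], \quad \P-a.s.
\]
Subtracting $K_t$ from inside the conditional expectation (legitimate since $K_t$ is $\Gc_t-$measurable and the essential infimum is over measures fixing $\Gc_{t^+}$) gives
\[
0 = \underset{\P' \in \Pc(t^+,\P)}{\mathrm{ess\,inf}^{\P}}\, \E^{\P'}_t[K_T - K_t], \quad \P-a.s.
\]
Taking $\E^{\P}$ of both sides and using the tower property together with the standard fact that the essential infimum of a downward-directed family can be attained along a minimizing sequence $(\P_n) \subset \Pc(t^+,\P)$ for which $\E^{\P_n}_t[K_T - K_t] \downarrow 0$ in $L^1(\P)$, I would pass the expectation through to obtain $\inf_n \E^{\P_n}[K_T - K_t] = 0$, whence $\inf_{\P' \in \Pc(t^+,\P)} \E^{\P'}[K_T - K_t] \le 0$.

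The step I expect to be the main obstacle is the interchange of $\E^{\P}$ with the essential infimum, i.e. justifying that $\E^{\P}\big[\mathrm{ess\,inf}^{\P}_{\P'} \E^{\P'}_t[K_T-K_t]\big] = \inf_{\P'} \E^{\P}\big[\E^{\P'}_t[K_T-K_t]\big]$. This requires the downward-directedness of the family $\{\E^{\P'}_t[K_T-K_t] : \P' \in \Pc(t^+,\P)\}$ under $\P$, so that the essential infimum is a decreasing $\P-$a.s. limit of a sequence and monotone (or dominated) convergence applies; one also needs that for $\P' \in \Pc(t^+,\P)$ the outer expectation satisfies $\E^{\P}[\E^{\P'}_t[\cdot]] = \E^{\P'}[\cdot]$, which follows from $\P' = \P$ on $\Gc_{t^+} \supset \Gc_t$. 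The stability of $\Pc(t^+,\P)$ under the pasting needed for directedness is the genuinely technical point, and I would invoke the concatenation/stability properties of the family $\Pc$ (as used throughout the second-order framework) to secure it.
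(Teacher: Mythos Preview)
Your proposal is correct and follows essentially the same route as the paper's own proof: both take $\E^{\P}$ of the minimality condition, invoke the (downward) directedness of the family $\{\E^{\P'}_t[K_T-K_t]:\P'\in\Pc(t^+,\P)\}$ to realize the essential infimum as a decreasing limit along a sequence $(\P^n)$, apply monotone convergence, and then use $\P^n=\P$ on $\Gc_{t^+}$ to replace the outer expectation $\E^{\P}$ by $\E^{\P^n}$. The lower bound via the nondecreasingness of $K$ is handled identically.
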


\begin{proof}
Indeed, fix some $\P\in\Pc$ and some $\P^{\prime}\in\Pc(t^+,\P)$. Taking expectation under $\P$ in \reff{eqmin}, we obtain readily
\begin{equation*}
\E^\P\left[\underset{\P^{\prime}\in\Pc(t^+,\P)}{\rm ess \, inf^{\P}}\E_t^{\P^{\prime}}[K_T-K_t]\right]=0.
\end{equation*}
Then, we know that the family $\Pc(t^+,\P)$ is upward directed (it is indeed clear from the result of \cite{sone:touz:zhan:13}). Therefore, by classical results, there is a sequence $(\P^n)_{n\geq 0}\subset \Pc(t^+,\P)$ such that
$$\underset{\P^{\prime}\in\Pc(t^+,\P)}{\rm ess \, inf^{\P}}\E_t^{\P^{\prime}}[K_T-K_t]=\underset{n\rightarrow+\infty}{\lim}\downarrow \E_t^{\P^{n}}[K_T-K_t].$$
Using this in \eqref{eq:mincond} and then the monotone convergence theorem under the fixed measure $\P$, we obtain
\begin{align*}
0=\E^\P\left[\underset{\P^{\prime}\in\Pc(t^+,\P)}{\rm ess \, inf^{\P}}\E_t^{\P^{\prime}}[K_T-K_t]\right]=\E^\P\left[\underset{n\rightarrow+\infty}{\lim}\downarrow \E_t^{\P^{n}}[K_T-K_t]\right]&=\underset{n\rightarrow+\infty}{\lim}\downarrow\E^\P\left[ \E_t^{\P^{n}}[K_T-K_t]\right]\\
&=\underset{n\rightarrow+\infty}{\lim}\downarrow\E^{\P^{n}}\left[ \E_t^{\P^{n}}[K_T-K_t]\right]\\
&=\underset{n\rightarrow+\infty}{\lim}\downarrow\E^{\P^{n}}\left[K_T-K_t\right]\\
&\geq \underset{\P^{'}\in\Pc(t^+,\P)}{\inf}\E^{\P^{'}}\left[K_T-K_t\right].
\end{align*}
Since $K$ is a non-decreasing process, the result follows.
\ep
\end{proof}

\vspace{0.5em}
\noindent We can now show  as in Theorem 4.4 of \cite{sone:touz:zhan:13} that the solution to the $2$BDSDE (\ref{eq}) can be represented as a supremum of solutions to the BDSDEs (\ref{BDSDE}).
\begin{Theorem}\label{theorep}
Let Assumptions \ref{ass} and \ref{ass1} hold. Assume $\xi\in\L^{2}$ and that $(Y,Z,K)$ is a solution to 2BDSDE \reff{eq}. Then, for any $\P\in\Pc$ and $0\leq t_1 <t_2\leq
T,$ 
\begin{align} 
Y_{t_1} = \underset{\P^{'}\in\Pc(t_1^+,\P)}{\rm ess \, sup^{\P}}
y_{t_1}^{\P^{'}}(t_2,Y_{t_2}),~ \P-a.s. \label{eqrepresentation} \end{align}
\end{Theorem}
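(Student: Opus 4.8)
The plan is to follow the two-sided scheme of Soner, Touzi and Zhang, establishing the inequality ``$\geq$'' by a comparison argument and the reverse inequality ``$\leq$'' through a quantitative a priori estimate combined with the minimality condition \reff{eqmin} on $K$.

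First I would fix $\P\in\Pc$ and $\P'\in\Pc(t_1^+,\P)$ and compare, on the interval $[t_1,t_2]$ and under $\P'$, the solution $(Y,Z)$ of the $2$BDSDE \reff{eq} with the solution $(y^{\P'},z^{\P'})$ of the BDSDE \reff{BDSDE} with terminal data $Y_{t_2}$ at time $t_2$. Setting $\delta Y:=Y-y^{\P'}$ and $\delta Z:=Z-z^{\P'}$, the two equations share the same terminal value, so $\delta Y_{t_2}=0$, and $\delta Y$ solves a linear BDSDE whose only inhomogeneity is the nondecreasing process $K$. Linearising the differences of $\widehat F$ and of $g$ by means of Assumption \ref{ass}(iii), applying the It\^o formula of Pardoux--Peng to $|\delta Y|^2$ under $\P'$ --- which produces the extra term $\int_{\cdot}^{t_2}\|g_s(Y_s,Z_s)-g_s(y^{\P'}_s,z^{\P'}_s)\|^2ds$ coming from the backward integral against $W$ --- and absorbing the $\delta Z$ contributions using the contraction constant $\alpha<1$ together with the domination $(1-\lambda)\widehat a_t\geq\alpha I_d$ of Assumption \ref{ass}(iv), I would obtain $\delta Y_{t_1}\geq 0$, that is $Y_{t_1}\geq y^{\P'}_{t_1}(t_2,Y_{t_2})$, $\P'$-a.s. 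Since $\P'=\P$ on $\Gc_{t_1^+}$ and, after passing to the modifications of the Remark, $\delta Y_{t_1}$ is essentially $\Gc_{t_1^+}$-measurable, this inequality also holds $\P$-a.s.; taking the essential supremum over $\P'$ yields $Y_{t_1}\geq{\rm ess\,sup}^{\P}\,y^{\P'}_{t_1}$.

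For the reverse inequality I would refine the previous energy computation into a quantitative bound. Keeping the nonnegative term $2\int_{t_1}^{t_2}\delta Y_s\,dK_s$ on the right and using the same absorption of the $\delta Z$ terms, a Gronwall argument gives, for a constant $C$ depending only on $T$, on the Lipschitz constant, and on $\alpha,\lambda$, an estimate of the form $0\leq Y_{t_1}-y^{\P'}_{t_1}\leq C\big(\E^{\P'}_{t_1}[(K_{t_2}-K_{t_1})^2]\big)^{1/2}$, $\P'$-a.s. The point is then to make the right-hand side arbitrarily small along a well chosen sequence $(\P^n)\subset\Pc(t_1^+,\P)$. Since $K$ is nondecreasing, $\E^{\P'}[K_{t_2}-K_{t_1}]\leq\E^{\P'}[K_T-K_{t_1}]$, so Lemma \ref{eq:mincond} guarantees that the first moment of $K_{t_2}-K_{t_1}$ can be driven to $0$; to upgrade this to the second moment appearing above I would interpolate, bounding $\E^{\P'}[(K_{t_2}-K_{t_1})^2]$ by a product of a small power of $\E^{\P'}[K_{t_2}-K_{t_1}]$ and a higher moment of $K_{t_2}-K_{t_1}$ that stays uniformly bounded thanks to the $L^{2+\varepsilon}$ a priori estimates made available by Assumption \ref{ass1}. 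Combining this with the upward directedness of $\Pc(t_1^+,\P)$, which lets me realise the essential supremum as a monotone limit, produces $Y_{t_1}\leq{\rm ess\,sup}^{\P}\,y^{\P'}_{t_1}$, $\P$-a.s., and the two inequalities give \reff{eqrepresentation}.

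The main obstacle is exactly the backward stochastic integral against $W$. On the one hand it forbids any pathwise treatment, so every estimate must be carried out in $L^2$ under each individual measure $\P'$ rather than $\omega$-by-$\omega$; on the other hand, because the conditioning $\sigma$-field $\Gc_t=\Fc^B_t\vee\Fc^W_T$ contains the whole trajectory of $W$, the term $\int\delta Y_s\big(g_s(Y)-g_s(y^{\P'})\big)\,d\W_s$ does not obviously vanish under $\E^{\P'}_t[\cdot]$, unlike the forward term $\int\delta Y_s\,\delta Z_s\,dB_s$. Controlling this backward integral --- through its It\^o isometry and the contraction hypothesis, while keeping careful track of which quantities are genuinely $\Gc_{t_1^+}$-measurable --- is the delicate part of both the comparison step and the quantitative estimate, and it is precisely where the assumptions $\alpha<1$ and $(1-\lambda)\widehat a_t\geq\alpha I_d$ are essential.
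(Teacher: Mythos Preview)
Your overall two--step strategy, and your treatment of the inequality $Y_{t_1}\geq{\rm ess\,sup}^\P\,y^{\P'}_{t_1}$ via comparison under each $\P'\in\Pc(t_1^+,\P)$, match the paper. The issue is in your Step~2.

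The gap is the asserted pointwise conditional bound
\[
0\leq Y_{t_1}-y^{\P'}_{t_1}\leq C\big(\E^{\P'}_{t_1}[(K_{t_2}-K_{t_1})^2]\big)^{1/2},\quad \P'\text{-a.s.}
\]
You have correctly located the obstruction: after applying It\^o's formula to $|\delta Y|^2$, the backward term $\int_{t_1}^{t_2}\delta Y_s\big(g_s(Y)-g_s(y^{\P'})\big)\cdot d\W_s$ does \emph{not} vanish under $\E^{\P'}_{t_1}[\cdot]$, because $\Gc_{t_1}\supset\Fc^W_T$. Invoking the It\^o isometry controls only its \emph{unconditional} second moment, which at best yields a bound on $\E^{\P'}[|\delta Y_{t_1}|^2]$, not an almost--sure inequality in terms of a conditional moment of $K$. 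A Gronwall argument run at the level of $\Gc_{t_1}$--conditional expectations simply does not close here; the contraction hypotheses $\alpha<1$ and $(1-\lambda)\widehat a\geq\alpha I_d$ absorb the quadratic variation term, but they do nothing to remove the backward stochastic integral itself.

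The paper circumvents this by working with \emph{unconditional first moments}. After the same linearisation it introduces the exponential process
\[
M_t=\exp\Big(\int_0^t\eta_s\cdot\widehat a_s^{-1/2}\,dB_s+\int_0^t\lambda_s\,ds-\tfrac12\int_0^t\|\eta_s\|^2\,ds\Big),
\]
built from $B$ alone. Integration by parts gives $d(M_t\delta Y_t)$ as the sum of a forward $dB$--integral, a backward $d\W$--integral, and $-M_t\,dK_t$. Taking the plain expectation $\E^{\P'}$ annihilates \emph{both} stochastic integrals at once, and since $\delta Y_{t_1}$ is $\Gc_{t_1^+}$--measurable one obtains, after H\"older,
\[
\E^{\P}[\delta Y_{t_1}]\leq C\Big(\E^{\P'}[K_{t_2}-K_{t_1}]\Big)^{1/3}\Big(\E^{\P'}[(K_{t_2}-K_{t_1})^2]\Big)^{1/3}.
\]
The second factor is uniformly bounded over $\Pc(t_1^+,\P)$ directly from the equation for $K$ (no $L^{2+\varepsilon}$ interpolation is needed here), and Lemma~\ref{eq:mincond} drives the first factor to zero along a minimising sequence. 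Hence $\inf_{\P'}\E^\P[\delta Y_{t_1}]\leq 0$, so $\E^\P\big[Y_{t_1}-{\rm ess\,sup}^\P\,y^{\P'}_{t_1}\big]\leq 0$; combined with the nonnegativity from Step~1 this forces equality $\P$-a.s. The whole point is that the argument never needs a pathwise or conditional estimate --- exactly the kind of estimate the backward integral makes unavailable.
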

\begin{proof}
We follow the (by now) classical approach for this problem, first used in \cite{STZ10}, and proceed in 2 steps.

\vspace{0.3em}
(i) Fix $0\leq t_1 <t_2\leq T,$ and $\P\in\Pc $. For any
$\P^{'}\in\Pc(t_1^+,\P)$, note that from (\ref{eq}), we have, $ \P^{'} - a.s.$, for any $t_1\leq t\leq t_2$
\begin{align*}
 Y_t=Y_{t_2}+\Int_t^{t_2}\widehat{F}_s(Y_s,Z_s)ds+\Int_t^{t_2} g_s(Y_s,Z_s)\cdot d\W_s-\Int_t^{t_2} Z_s\cdot dB_s +K_{t_2}-K_t, 
\end{align*}
and that $K$ is nondecreasing, $\P^{'}-a.s.$ Applying the comparison principle for BDSDE (see \cite{SGL05}) under $\P$ , we have 
$Y_{t_1}\geq y_{t_1}^{\P^{'}}(t_2,Y_{t_2}),$ $\P^{'} - a.s$. Since $\P^{'} = \P$ on $\Gc_{t_1}^{+}$, we get $Y_{t_1}\geq y_{t_1}^{\P^{'}}(t_2,Y_{t_2}), \P - a.s.$ and thus 
\begin{align*} 
Y_{t_1} \geq\underset{\P^{\prime}\in\Pc(t_1^+,\P)}{\rm ess \, sup^{\P}}
y_{t_1}^{\P^{\prime}}(t_2,Y_{t_2}),~ \P-a.s. \end{align*} 

(ii) To prove the reverse inequality in representation (\ref{eqrepresentation}), we use standard linearization techniques. Fix
$\P\in\Pc $, for every  $\P^{\prime}\in\Pc(t_1^+,\P)$, denote $ \delta Y := Y-y^{\P^{'}}(t_2,Y_{t_2}) ~\text{and}~ \delta Z := Z-z^{\P^{'}}(t_2,Y_{t_2}). $
By Assumption \ref{ass}(iii), there exist bounded processes $\lambda, \eta, \gamma,\beta$, which are respectively $\R$, $\R^d$, $\R^d$ and $\R-$valued, such that, $\P^{'}-a.s.$ 
\begin{align*} 
\delta Y_t =&\ \Int_t^{t_2}\!(\lambda_s\delta Y_s+\eta_s\cdot \widehat{a}_s^{\frac12}\delta Z_s)ds+\Int_t^{t_2}\left(\!\gamma_s\delta Y_s +\beta_s\delta Z_s\right)\cdot d\W_s-\Int_t^{t_2}\!\delta Z_s\cdot dB_s +K_{t_2}-K_t. \end{align*}
Define
\begin{align}
M_t:=\exp\Big(\Int_0^t\eta_s\cdot \widehat{a}_s^{-1/2}dB_s+\Int_0^t\lambda_sds -\Frac{1}{2}\Int_0^t\|\eta_s\|^2 ds\Big),\ t_1\leq t\leq t_2,\ \P^{'} - a.s. \label{M}
\end{align}
By integration by parts, we have
\begin{align*}
d(M_t\delta Y_t) = M_t(\delta Z_t + \delta Y_t\eta_t\widehat{a}^{-1/2}_t)\cdot dB_t-M_t\beta_t\delta Z_t\cdot d\W_t -M_t dK_t.
\end{align*}
We deduce
$$\E^\P[\delta Y_{t_1}]=\E^{\P^{'}}\left[M_{t_1}^{-1}\Int_{t_1}^{t_2}M_tdK_t\right]\leq\E^{\P^{'}}\left[\underset{t_1\leq t\leq t_2}{\Sup}(M_{t_1}^{-1}M_{t})(K_{t_2}-K_{t_1})\right],$$
where we used the fact that $K^{\P^{'}}$ is non-decreasing and that since $\delta Y_{t_1}$ is $\mathcal F_{t_1^+}$-measurable, its expectation is the same under $\P$ and $\P^{'}$. By the boundedness of $\lambda,\eta, \gamma, \beta$, for every $p\geq 1$ we have, 
\begin{align}
\E^{\P^{'}}\left[\underset{t_1\leq t\leq t_2}{\Sup}(M_{t_1}^{-1}M_{t})^p+\underset{t_1\leq t\leq t_2}{\Sup}(M_{t_1}M_{t}^{-1})^p\right]\leq C_p ,\ t_1\leq t\leq t_2, \ \P^{'} - a.s.\label{eq4} 
\end{align} 
Then it follows from the H\"{o}lder inequality that
\begin{align*} 
\mathbb E^\P\left[Y_{t_1}-y_{t_1}^{\P^{'}}(t_2,Y_{t_2})\right]&\leq \left(\E^{\P^{'}}\left[\underset{t_1\leq t\leq t_2}{\Sup}(M_{t_1}^{-1}M_{t})^3\right]\right)^{1/3}\left(\E^{\P^{'}}\left[(K_{t_2}-K_{t_1})^{3/2}\right]\right)^{2/3}\\
&\leq C\left(\E^{\P^{'}}\left[K_{t_2}-K_{t_1}\right]\E^{\P^{'}}\left[(K_{t_2}-K_{t_1})^{2}\right]\right)^{1/3}.
\end{align*} 
From the  definition of $K$, we have
\begin{align}
\underset{\P^{\prime}\in\Pc(t_1^+,\P)}{\Sup} \E^{\P^{\prime}}\left[(K_{t_2}-K_{t_1})^{2}\right] \leq C\left(\|Y\|^2_{\D^{2}}+
\|Z\|^2_{\H^{2}}+\left(\phi^{2,\eps}\right)^{\frac 2{2+\eps}}+\left(\psi^{2,\eps}\right)^{\frac 2{2+\eps}} \right)<+ \infty.\label{eq5}
\end{align}
Then, by taking the infimum in $\Pc(t_1^+,\P)$ in the last inequality and using (\ref{eq5}) and the result of Lemma \ref{eq:mincond}, we obtain
$$\underset{\P^{'}\in\Pc(t_1^+,\P)}{\inf}\mathbb E^\P\left[Y_{t_1}-y_{t_1}^{\P^{'}}(t_2,Y_{t_2})\right]\leq 0.$$
But we clearly have
$$0\geq\underset{\P^{'}\in\Pc(t_1^+,\P)}{\inf}\mathbb E^\P\left[Y_{t_1}-y_{t_1}^{\P^{'}}(t_2,Y_{t_2})\right]\geq \mathbb E^\P\left[Y_{t_1}-\underset{\P^{'}\in\Pc(t_1^+,\P)}{\rm ess \, sup^{\P}}y_{t_1}^{\P^{'}}(t_2,Y_{t_2})\right].$$
Since the quantity under the expectation is positive $\P-a.s.$ by Step $1$, we deduce that it is actually equal to $0$, $\P-a.s.$, which is the desired result.
\ep
\end{proof}

\vspace{0.5em}
\noindent As an immediate consequence of the representation formula (\ref{eqrepresentation}) together with the comparison principle for BDSDEs, we have the following comparison principle for $2$BDSDEs.
\begin{Theorem}
Let $(Y,Z)$ and $(Y',Z')$ be the solutions of $2$BDSDEs with terminal conditions $\xi$ and $\xi'$ and generators $\widehat{F}$ and $\widehat{F}'$ respectively, and let $(y^{\P},z^{\P})$ and $(y'^{\P},z'^{\P})$ the solutions of the associated BDSDEs. Assume that they both verify Assumptions \ref{ass} and \ref{ass1}, and that we have $\Pc - q.s.$, $\xi\leq\xi' ,~ \widehat{F}(y_t'^{\P},z_t'^{\P})\leq \widehat{F}^{'}(y_t'^{\P},z_t'^{\P}).$ Then $Y \leq Y' ,$  $\Pc - q.s.$
\end{Theorem}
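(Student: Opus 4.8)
The plan is to derive the comparison principle for $2$BDSDEs directly from the representation formula \eqref{eqrepresentation} established in Theorem \ref{theorep}, combined with the comparison principle for standard BDSDEs of \cite{SGL05}. The key observation is that the $2$BDSDE solution at any time can be written as an essential supremum over a family of BDSDE solutions written under the different measures $\P^{\prime}\in\Pc(t^+,\P)$, so that a comparison of the underlying BDSDEs propagates to a comparison of the suprema.

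First I would fix an arbitrary $\P\in\Pc$ and apply the representation \eqref{eqrepresentation} to both solutions, working on the whole interval (that is, taking $t_1=t$ and $t_2=T$, with terminal values $\xi$ and $\xi'$ respectively). This gives, $\P-a.s.$,
\begin{align*}
Y_t = \underset{\P^{\prime}\in\Pc(t^+,\P)}{\rm ess \, sup^{\P}}\ y_t^{\P^{\prime}}(T,\xi),\quad Y_t' = \underset{\P^{\prime}\in\Pc(t^+,\P)}{\rm ess \, sup^{\P}}\ y_t'^{\P^{\prime}}(T,\xi').
\end{align*}
The strategy then reduces to comparing, for each individual $\P^{\prime}\in\Pc(t^+,\P)$, the BDSDE solution $y^{\P^{\prime}}(T,\xi)$ with generator $\widehat F$ and terminal condition $\xi$ against the BDSDE solution $y'^{\P^{\prime}}(T,\xi')$ with generator $\widehat F'$ and terminal condition $\xi'$, both written under the same measure $\P^{\prime}$.

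Next I would invoke the comparison theorem for BDSDEs from \cite{SGL05} under each $\P^{\prime}$. The hypotheses $\xi\leq\xi'$ and $\widehat F(y_t'^{\P},z_t'^{\P})\leq\widehat F'(y_t'^{\P},z_t'^{\P})$ hold $\Pc-q.s.$, hence in particular $\P^{\prime}-a.s.$ for every $\P^{\prime}\in\Pc(t^+,\P)$; together with the Lipschitz property of $\widehat F$ from Assumption \ref{ass}(iii), these are exactly the structural conditions needed to apply the BDSDE comparison principle and conclude $y_t^{\P^{\prime}}(T,\xi)\leq y_t'^{\P^{\prime}}(T,\xi')$, $\P^{\prime}-a.s.$, and therefore $\P-a.s.$ since the two measures agree on $\Gc_{t^+}$ and the random variables involved are $\mathcal F_{t^+}^B\vee\Fc_{t,T}^W$-measurable. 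Taking the essential supremum over $\P^{\prime}\in\Pc(t^+,\P)$ on both sides preserves the inequality, yielding $Y_t\leq Y_t'$, $\P-a.s.$; since $\P\in\Pc$ was arbitrary, this is precisely $Y\leq Y'$, $\Pc-q.s.$

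The main obstacle I anticipate is the careful handling of the generator-comparison hypothesis, which is stated in terms of the BDSDE solutions $(y'^{\P},z'^{\P})$ rather than in a pointwise form $\widehat F\leq\widehat F'$. To apply the BDSDE comparison theorem cleanly, one must check that comparing $\widehat F$ evaluated along the trajectory of the second solution against $\widehat F'$ along the same trajectory is the correct ``one-sided'' generator condition; this is standard in the BDSDE comparison framework but requires verifying that the two measures $\P$ and $\P^{\prime}$ may be interchanged in the statement of the hypothesis thanks to the quasi-sure nature of the assumption, so that the comparison indeed holds $\P^{\prime}-a.s.$ for every relevant $\P^{\prime}$. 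A secondary technical point is confirming that the essential supremum is compatible with the inequality under the change between $\P$ and $\P^{\prime}$; this follows from the measurability of the random variables on $\Gc_{t^+}$ together with the fact, recalled in the preliminaries, that $\P^{\prime}=\P$ on $\Gc_{t^+}$, exactly as exploited in the proof of Theorem \ref{theorep}.
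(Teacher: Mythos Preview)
Your proposal is correct and follows precisely the approach indicated in the paper, which presents this comparison principle as an ``immediate consequence of the representation formula \eqref{eqrepresentation} together with the comparison principle for BDSDEs'' and gives no further proof. Your write-up simply fleshes out this immediate consequence, including the technical measurability points needed to pass between $\P$ and $\P'$, exactly as done in the proof of Theorem~\ref{theorep}.
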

\subsection{A priori estimates}
In this section, we show some a priori estimates which will be not only useful in the sequel, but also ensure the uniqueness of a solution to a 2DBSDE in $\D^2\times\H^2$. 
We start with a reminder of $L^{p}$ estimates for solutions of BDSDEs which were proved in \cite{pp1994} (see Theorem 4.1 p.217).
\begin{Theorem}\label{th:estimees}
Let Assumptions \ref{ass}, \ref{ass1} and \ref{assumtion g} hold and assume that $\xi\in L^{2+\eps}$ for some $\eps>0$. We have
\begin{equation*}
\E^{\P}\left[\underset{0\leq t\leq T}{\Sup}|y_t^{\P}|^{2+\eps}+\left(\Int_0^T \|\widehat{a}_s^{1/2}z_s^{\P}\|^2ds\right)^{\frac{2+\eps}{2}}\right] \leq C \E^{\P}\left[|\xi|^{2+\eps}+\Int_0^T (|\widehat{F}_t^0|^{2+\eps} + \|g_t^0\|^{2+\eps}) dt\right].
\end{equation*} 
\end{Theorem}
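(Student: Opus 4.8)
The plan is to fix $\P\in\Pc$ and carry out the classical $L^p$ a priori estimate with $p=2+\eps$, the only genuinely new feature being the backward It\^o integral against $W$. First I would apply the It\^o formula for doubly stochastic processes (Lemma 1.3 in \cite{pp1994}) to $s\mapsto (\delta+|y^\P_s|^2)^{p/2}$ and let $\delta\downarrow 0$ to reach $s\mapsto|y^\P_s|^{p}$. The decisive point is that, because $\Int_t^T g_s\cdot d\W_s$ is a \emph{backward} It\^o integral, its quadratic--variation correction enters with the \emph{opposite} sign to that of the forward integral $\Int_t^T z_s\cdot dB_s$. Concretely one obtains, for every $t$,
\begin{align*}
|y^\P_t|^{p}+\Frac{p(p-1)}2\Int_t^T|y^\P_s|^{p-2}\|\widehat a_s^{1/2}z^\P_s\|^2\,ds
&=|\xi|^p+p\Int_t^T|y^\P_s|^{p-2}y^\P_s\widehat F_s(y^\P_s,z^\P_s)\,ds\\
&\quad+\Frac{p(p-1)}2\Int_t^T|y^\P_s|^{p-2}\|g_s(y^\P_s,z^\P_s)\|^2\,ds+M_t,
\end{align*}
where $M_t$ collects the two stochastic integrals (against $\W$ and against $B$); these are only local martingales, so I would first localize by a sequence of stopping times before taking expectations.

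The hard part is the term carrying $\|g_s(y^\P_s,z^\P_s)\|^2$, which, unlike in the classical BSDE case, survives on the right--hand side with a positive sign. Here I would invoke Assumption \ref{assumtion g} to bound $\|g_s(y,z)\|^2=\mathrm{Tr}(g_sg_s^\top)\leq cd(1+y^2)+\beta\|z\|^2$, and then use the ellipticity of $\widehat a$ (the lower bound $\underline a_\P\leq\widehat a$ from Definition \ref{defP}, together with Assumption \ref{ass}(iv)) to dominate $\beta\|z^\P_s\|^2$ by a multiple of $\|\widehat a_s^{1/2}z^\P_s\|^2$. The strict inequalities $\beta<1$, $\alpha<1$ and $\lambda<1$ are precisely what guarantee that, after transferring this contribution to the left, the coefficient of $\Int_t^T|y^\P_s|^{p-2}\|\widehat a_s^{1/2}z^\P_s\|^2\,ds$ stays strictly positive. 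This absorption step is where the doubly stochastic structure is genuinely felt, and it is the reason Assumption \ref{assumtion g} (the analogue of (H.2) in \cite{pp1994}) must be imposed as soon as $p>2$.

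Once the $\|z\|^2$ contribution is controlled, the remaining terms are routine. I would bound $\widehat F$ through the Lipschitz estimate of Assumption \ref{ass}(iii), namely $|\widehat F_s(y,z)|\leq|\widehat F^0_s|+C(|y|+\|\widehat a_s^{1/2}z\|)$, and apply Young's inequality to split each product $|y^\P_s|^{p-2}\times(\cdots)$ into a multiple of $|y^\P_s|^p$ (to be fed to Gronwall) plus the data terms $|\widehat F^0_s|^{p}$ and $\|g^0_s\|^{p}$, whose expectations are finite by Assumption \ref{ass1}. Taking expectations annihilates the localized martingale $M$. To recover the $\sup_t$ inside the expectation and the full $\Big(\Int_0^T\|\widehat a_s^{1/2}z^\P_s\|^2\,ds\Big)^{p/2}$ term, I would then return to $M$ and apply the Burkholder--Davis--Gundy inequality to each stochastic integral, using Young's inequality once more to absorb a small multiple of $\E^\P[\sup_{0\le t\le T}|y^\P_t|^p]$ into the left--hand side. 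A final application of Gronwall's lemma in integral form, followed by letting the localizing times tend to $T$ via monotone/dominated convergence, yields the claimed estimate with a constant $C$ depending only on $T$, $p$, the Lipschitz and growth constants, and the ellipticity bounds.
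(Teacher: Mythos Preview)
The paper does not supply its own proof of this estimate; it simply recalls it as Theorem~4.1, p.~217, of \cite{pp1994}. Your sketch is exactly that classical argument---It\^o's formula for doubly stochastic processes applied to $|y^\P|^p$, the sign reversal for the backward--integral correction, absorption of the $\|g\|^2$ term via Assumption~\ref{assumtion g}, then Lipschitz/Young/Gronwall and BDG---so it coincides with the cited proof.
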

\noindent The main result of this section is then
\begin{Theorem}\label{thestiamte}
Let Assumptions \ref{ass}, \ref{ass1} and \ref{assumtion g} hold.

\vspace{0.5em}
\noindent $\rm{(i)}$ Assume $\xi\in\L^{2}\cap L^{2+\eps}$ and that $(Y,Z,K)$ is a solution to the $2$BDSDE \reff{eq}. Then, for any $\eps'\in(0,\eps)$, there exist a constant $C$ such that 
\begin{align*}
\|Y\|^{2+\eps'}_{\D^{2+\eps'}} + \|Z\|^{2+\eps'}_{\H^{2+\eps'}} + \underset{\P\in\Pc}{\Sup}\,\E^{\P}\left[ |K_T|^{2+\eps'}\right] \leq C \left(\|\xi\|^{2+\eps'}_{\L^{2+\eps'}} +\|\xi\|^{\frac{2+\eps'}{2+\eps}}_{L^{2+\eps}}+ (\phi^{2,\eps})^{\frac{2+\eps'}{2+\eps}}+(\psi^{2,\eps})^{\frac{2+\eps'}{2+\eps}}\right). 
\end{align*} 
\vspace{0.5em}
\noindent $\rm{(ii)}$ Assume $\xi^i\in\L^{2}\cap L^{2+\eps}$ and that $(Y^i,Z^i, K^i)$ is a solution to the $2$BDSDE \reff{eq}, $i=1,2$. Denote $\delta\xi:=\xi^1-\xi^2,\,\delta Y:=Y^1-Y^2,\,\delta Z:= Z^1-Z^2,\, \text{and}\ \delta K:=K^{1}-K^{2}$. Then, there exist a constant $C$ such that
\begin{align*} 
\|\delta Y\|_{\D^{2}}& \leq C\|\delta\xi\|_{\L^{2}},\ 
\|\delta Z\|^2_{\H^{2}} + \|\delta K\|^2_{\mathbb I^{2}}
\leq  C \|\delta\xi\|_{\L^{2}}\sum_{i=1}^2\left(\|\xi^i\|^2_{\L^{2}} +\|\xi^i\|^{\frac{2}{2+\eps}}_{L^{2+\eps}}+ (\phi^{2,\eps})^{\frac 2{2+\eps}}+(\psi^{2,\eps})^{\frac 2{2+\eps}}\right).
\end{align*}
\end{Theorem}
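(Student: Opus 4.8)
The plan is to prove both a priori estimates by leveraging the representation formula \eqref{eqrepresentation} from Theorem \ref{theorep}, which reduces everything to estimates for the classical BDSDEs \eqref{BDSDE} combined with control of the nondecreasing process $K$. For part (i), I would first bound $\|Y\|_{\D^{2+\eps'}}$: since $Y_{t}=\operatorname{ess\,sup}^{\P}_{\P'\in\Pc(t^+,\P)}y_t^{\P'}(T,\xi)$, the supremum of the BDSDE solutions, I can dominate $|Y_t|$ by a supremum of $|y_t^{\P'}|$ and invoke the $L^{2+\eps}$ estimate of Theorem \ref{th:estimees} together with the structure of the $\L^{p}$ norm (which is precisely designed to aggregate these conditional suprema). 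The passage from exponent $2+\eps$ to $2+\eps'$ with $\eps'<\eps$ is the standard trick that lets one absorb the supremum over measures at a slightly smaller integrability, via Doob's inequality and Hölder. The term $\|\xi\|^{(2+\eps')/(2+\eps)}_{L^{2+\eps}}$ on the right-hand side signals exactly such a Hölder/Young interpolation step.

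Next I would estimate $K$. From the defining equation \eqref{eq}, solving for $K_T-K_t$ gives
\begin{align*}
K_T-K_t = Y_t - \xi - \Int_t^T \widehat F_s(Y_s,Z_s)\,ds - \Int_t^T g_s(Y_s,Z_s)\cdot d\W_s + \Int_t^T Z_s\cdot dB_s,
\end{align*}
so that, after using the Lipschitz bounds from Assumption \ref{ass}(iii) and the contraction condition \ref{ass}(iv) to handle the $g$ term, $\|K\|_{\I^{2+\eps'}}$ is controlled by $\|Y\|_{\D^{2+\eps'}}$, $\|Z\|_{\H^{2+\eps'}}$, and the data. This gives a bound on $K$ in terms of $Z$. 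To then close the loop and bound $\|Z\|_{\H^{2+\eps'}}$ independently, I would apply Itô's formula to $|Y_t|^{2}$ under a fixed $\P$, producing a $\Int_0^T \|\widehat a_s^{1/2}Z_s\|^2\,ds$ term that can be isolated. The backward integral against $d\W$ contributes an extra quadratic-variation term in $Z$ (the doubly stochastic feature), which must be absorbed using precisely the gap $\alpha<1$ in \ref{ass}(iii) and the ellipticity \ref{ass}(iv); this is where the contraction hypotheses become indispensable. The product $Y\,dK$ term is then controlled by Young's inequality, $\eps Y^2 + C_\eps K^2$, and the $K^2$ part is fed back from the previous $K$-estimate, or more cleanly, one uses the minimality-flavoured bound \eqref{eq5} already established in the proof of Theorem \ref{theorep}.

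For part (ii), the stability estimate, I would write the BDSDE satisfied by the differences $(\delta Y,\delta Z,\delta K)$ and linearize the generators using the Lipschitz assumption, exactly as in Step (ii) of Theorem \ref{theorep}. The $\D^2$ bound $\|\delta Y\|_{\D^2}\le C\|\delta\xi\|_{\L^2}$ follows from the representation: the difference of two suprema is bounded by the supremum of the differences of the underlying BDSDEs, and classical BDSDE stability gives linear dependence on $\|\delta\xi\|$. For $\delta Z$ and $\delta K$, I would again apply Itô to $|\delta Y|^2$, isolate $\Int_0^T\|\widehat a_s^{1/2}\delta Z_s\|^2 ds$, absorb the backward-integral quadratic term via $\alpha<1$ and \ref{ass}(iv), and handle the cross term $\Int \delta Y\,d(\delta K)$ by Cauchy--Schwarz, which produces the characteristic mixed bound $\|\delta\xi\|_{\L^2}\cdot(\text{sum of norms of }\xi^i)$ — the asymmetry in the exponents reflects that one factor comes from a square-root of the $\delta Y$ estimate and the other from the uniform a priori bounds on each $K^i$ furnished by part (i).

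The main obstacle, and the genuinely new difficulty relative to the 2BSDE theory, is the treatment of the backward stochastic integral $\Int g_s\cdot d\W_s$: because $W$ and $B$ are not independent and $W$-measurability runs backward, one cannot obtain pathwise estimates for the solutions, so every bound must be taken in expectation under each $\P$ and then aggregated quasi-surely. Concretely, the delicate point is that when applying Itô's formula the backward integral contributes a term comparable to $\alpha\Int\|Z_s\|^2 ds$ (or $\|\delta Z_s\|^2$), which has the same structure as the term I am trying to estimate; the entire argument hinges on absorbing it using the strict inequalities $\alpha<1$ and $(1-\lambda)\widehat a_t\ge\alpha I_d$ from Assumptions \ref{ass}(iii)--(iv) so that a genuinely positive multiple of $\Int\|\widehat a_s^{1/2}Z_s\|^2 ds$ survives on the left-hand side. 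Making this absorption uniform over $\Pc$ — rather than $\P$ by $\P$ — while respecting the measurability constraints of the doubly stochastic framework, is the crux of the proof.
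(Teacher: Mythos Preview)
Your proposal is correct and follows essentially the same route as the paper: the representation \eqref{eqrepresentation} combined with the $\L^p$-norm inequality (the $\eps'<\eps$ Doob/H\"older trick) and Theorem \ref{th:estimees} for $Y$, then It\^o's formula on $|Y|^2$ with the $K$-bound \eqref{eq:k} fed back in and the backward-integral term absorbed via Assumptions \ref{ass}(iii)--(iv) for $Z$ and $K$; part (ii) is handled identically on the differences. The only point you leave implicit is that bounding $\sup_t|Y_t|$ requires first linearizing $y_t^{\P'}$ into a conditional expectation of explicit terms (as in Step (ii) of Theorem \ref{theorep}) before invoking the $\E_t^{\Pc,\P}$ aggregation inequality, rather than appealing directly to $\sup_{\P'}|y_t^{\P'}|$.
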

\begin{proof}
(i) For every $\P\in\Pc $ and $\P^{\prime}\in\Pc(t^+,\P)$ we have, using Theorem \ref{theorep} and the usual linearization procedure, that for some bounded processes $\alpha$ and $\beta$ and any $\eps'\in(0,\eps)$
\begin{align*}
&\E^{\P}\left[\underset{0\leq t\leq T}{\Sup}\,|Y_t|^{2+\eps'}\right]=  \E^{\P}\left[\underset{0\leq t\leq T}{\Sup}\left( \underset{\P^{'}\in\Pc(t^+,\P)}{\rm ess \, sup^{\P}}|y_t^{\P^{'}}|\right)^{2+\eps'}\right] \\
\leq&\   \E^{\P}\left[\underset{0\leq t\leq T}{\Sup}\left(\E_t^{\Pc,\P}\left[|\xi |+\Int_t^T |\widehat{F}_s^0|ds +C\Int_t^T ( |y_s^{\P^{'}}|+ \|\widehat{a}_s^{1/2}z_s^{\P^{'}}\|)ds+\abs{\Int_t^T g^0_s\cdot d\W_s}\right.\right.\right. \\
&\left.\left.\left. + \abs{\Int_t^T \alpha_s y_s^{\P^{'}}\cdot d\W_s}+\abs{\Int_t^T \beta_s z_s^{\P^{'}}\cdot d\W_s}+\abs{\Int_t^T z_s^{\P^{'}}\cdot dB_s}\right]\right)^{2+\eps'}\right]\\
 \leq&\  C\left(\E^{\P}\left[\underset{0\leq t\leq T}{\Sup} \E_t^{\Pc,\P}[|\xi |]^{2+\eps'}\right]+\E^{\P}\left[\underset{0\leq t\leq T}{\Sup}\left(\E_t^{\Pc,\P}\left[\Int_0^T |\widehat{F}_s^0| ds\right]\right)^{2+\eps'}\right]\right)\\
&+   C\left(\E^{\P}\left[\underset{0\leq t\leq T}{\Sup}\left(\E_t^{\Pc,\P}\left[\Int_0^T |y_s^{\P^{'}}|ds\right]\right)^{2+\eps'}\right]+\E^{\P}\left[\underset{0\leq t\leq T}{\Sup}\left(\E_t^{\Pc,\P}\left[\Int_0^T \|\widehat{a}_s^{1/2}z_s^{\P^{'}}\|ds\right]\right)^{2+\eps'}\right]\right)\\
&+   C\left(\E^{\P}\left[\underset{0\leq t\leq T}{\Sup}\left(\E_t^{\Pc,\P}\left[\abs{\Int_t^T g^0_s\cdot d\W_s}\right]\right)^{2+\eps'}\right]+\E^{\P}\left[\underset{0\leq t\leq T}{\Sup}\left(\E_t^{\Pc,\P}\left[\abs{\Int_t^T z_s^{\P^{'}}\cdot dB_s}\right]\right)^{2+\eps'}\right]\right)\\
&+C\E^{\P}\left[\underset{0\leq t\leq T}{\Sup}\left(\E_t^{\Pc,\P}\left[\abs{\Int_t^T y_s^{\P^{'}}\alpha_s\cdot d\W_s}\right]\right)^{2+\eps'}+\underset{0\leq t\leq T}{\Sup}\left(\E_t^{\Pc,\P}\left[\abs{\Int_t^T z_s^{\P^{'}}\beta_s\cdot d\W_s}\right]\right)^{2+\eps'}\right].
\end{align*}

\vspace{0.5em}
\noindent Now we remind the reader that by (\cite{sone:touz:zhan:11b}, \cite{poss13}), for any random variable $A$, we have
$$\underset{\P\in\Pc}{\Sup}\ \E^{\P}\left[\underset{0\leq t\leq T}{\Sup}\ \E_t^{\Pc,\P}[|A|]^{2+\eps'}\right]\leq C \underset{\P\in\Pc}{\Sup}\left(\E^{\P}\left[|A|^{2+\epsilon}\right]\right)^{\frac{2+\eps'}{2+\epsilon}}.$$

\vspace{0.5em}
\noindent We therefore deduce with BDG inequalities that (remember that $\alpha $ and $\beta$ are bounded)
\begin{align*}
\E^{\P}\left[\underset{0\leq t\leq T}{\Sup}\,|Y_t|^{2+\eps'}\right]\leq &\ C\left(\No{\xi}^{2+\eps'}_{\L^{2+\eps'}}+(\phi^{2,\eps})^{\frac{2+\eps'}{2+\eps}}+(\psi^{2,\eps})^{\frac{2+\eps'}{2+\eps}}\right)\\
&+C\underset{\P\in\Pc}{\sup}\ \E^{\P}\left[\underset{0\leq t\leq T}{\Sup}|y_t^{\P}|^{2+\eps}+\left(\Int_0^T \|\widehat{a}_s^{1/2}z_s^{\P}\|^2ds\right)^{\frac{2+\eps}{2}}\right] ^{\frac{2+\eps'}{2+\eps}}.
\end{align*}

\vspace{0.5em}
\noindent Finally, we obtain by Theorem \ref{th:estimees}
\begin{align}
\|Y\|^{2+\eps'}_{\D^{2+\eps'}} \leq C \left(\No{\xi}^{2+\eps'}_{\L^{2+\eps'}}+\No{\xi}_{L^{2+\eps}}^{2+\eps'}+(\phi^{2,\eps})^{\frac{2+\eps'}{2+\eps}}+(\psi^{2,\eps})^{\frac{2+\eps'}{2+\eps}}\right). \label {eqesty}
\end{align}

\vspace{0.5em}
\noindent When it comes to the estimate for $Z$, we apply It\^o's formula to $|Y|^2$ under each $\P\in\Pc$ and from the Lipschitz Assumption \ref{ass}(iii) we have, using BDG inequality and our assumptions on $g$ and $\widehat F$
 
 \begin{align*}
\E^{\P}\left[\left(\Int_0^T \|\widehat{a}^{1/2}_sZ_s\|^2 ds\right)^{\frac{2+\eps'}{2}}\right] \leq&\ C \E^{\P}\left[|\xi|^{2+\eps'}+\left(\Int_0^T |Y_s|(|\widehat{F}^0_s|+|Y_s|+\|\widehat{a}^{1/2}_sZ_s\|)ds\right)^{\frac{2+\eps'}{2}}\right]\\
&+C\E^{\P}\left[\left(\int_0^TY_s^2\No{g_s(Y_s,Z_s)}^2ds\right)^{\frac{2+\eps'}{4}}+\left(\int_0^TY_s^2\No{\widehat a_s^{1/2}Z_s}^2ds\right)^{\frac{2+\eps'}{4}}\right]\\
&+C\E^\P\left[\left(\Int_0^T\| g_s(Y_s,Z_s)\|^2ds\right)^{\frac{2+\eps'}{2}}+\Int_0^T|Y_s|^{\frac{2+\eps'}{2}}dK_s\right]\\
\leq &\ C \nu^{-1} \E^{\P}\left[|\xi|^{2+\eps'}+\underset{0\leq s\leq T}{\Sup}|Y_s|^{2+\eps'}+\Int_0^T|\widehat{F}^0_s|^{2+\eps'}ds+\Int_0^T\| g^0_s\|^{2+\eps'}ds\right]\\
&+ \nu \E^{\P}\left[\left(\Int_0^T\|\widehat{a}^{1/2}_sZ_s\|^2ds\right)^{\frac{2+\eps'}{2}}+|K_T|^{2+\eps'}+\alpha^{\frac{2+\eps'}{2}} \left(\Int_0^T\|Z_s\|^2ds\right)^{\frac{2+\eps'}{2}}\right],
\end{align*}
for any $\nu\in(0,1]$. But by the definition of $K_T$, it is clear that
\begin{align}\label{eq:k}
\E^{\P}\left[|K_T|^{2+\eps'}\right]\leq  C_0\E^{\P}\left[|\xi|^{2+\eps'}+\underset{0\leq s\leq T}{\Sup} |Y_s|^{2+\eps'}+\left(\Int_0^T\|\widehat{a}^{1/2}_sZ_s\|^2ds\right)^{\frac{2+\eps'}2}
+\Int_0^T\left(|\widehat{F}^0_s|^{2+\eps'}+ \|g^0_s\|^{2+\eps'}\right)ds\right],
\end{align}
for some constant $C_0$ independent of $\varepsilon$. 

\vspace{0.5em}
\noindent Then, using in particular Assumption \ref{ass}(iv)
\begin{align*}
\E^{\P}\left[\left(\Int_0^T \|\widehat{a}^{1/2}_sZ_s\|^2 ds\right)^{\frac{2+\eps'}{2}}\right]\leq&\ C \nu^{-1} \E^{\P}\left[|\xi|^{2+\eps'}+\underset{0\leq s\leq T}{\Sup}|Y_s|^{2+\eps'}
+\Int_0^T|\widehat{F}^0_s|^{2+\eps'}ds+\Int_0^T \|g^0_s\|^{2+\eps'}ds\right]\\
&+ (\nu +C_0\nu + (1-\lambda)^{\frac{2+\eps'}{2}})\E^{\P}\left[\left(\Int_0^T\|\widehat{a}^{1/2}_sZ_s\|^2ds\right)^{\frac{2+\eps'}{2}}\right]. 
\end{align*}
Choosing $\nu$ small enough, this implies the desired result by (\ref{eqesty}). Finally, the estimate for the $K$ follows directly from \reff{eq:k}.

\vspace{0.5em}
(ii) First of all, we can follow the same arguments as in (i) above to obtain the existence of a constant
$C$, depending only on $T$ and the Lipschitz constant of
$\widehat{F}$ and $g$ such that for all $\P\in\Pc$ 
\begin{align}
\E^{\P}\left[\underset{0\leq t\leq T}{\Sup}\,|\delta Y_t|^2\right]=  \E^{\P}\left[\underset{0\leq t\leq T}{\Sup}\left( \underset{\P^{'}\in\Pc(t^+,\P)}{\rm ess \, sup^{\P}}|\delta y_t^{\P^{'}}|\right)^2\right]\leq C\left(\|\delta\xi\|_{\L^{2}}^2+\|\delta\xi\|_{\L^{2+\eps}}^{\frac 2{2+\eps}}\right)
\label{eqest1} \end{align} 
Applying It\^o formula to $|\delta Y|^2$, under each $\P\in\Pc$, leads to 
\begin{align*} 
\E^{\P}\left[\Int_0^T \|\widehat{a}^{1/2}_s\delta Z_s\|^2 ds\right]\leq &\
C\E^{\P}\left[|\delta\xi|^2+\Int_0^T |\delta Y_s|(|\delta Y_s|
+\|\widehat{a}^{1/2}_s\delta Z_s\|)ds+\Int_0^T|\delta Y_s|d|\delta K_s|+\Int_0^T|\delta Y_s|^2ds\right]\\
\leq&\ C  \E^{\P}\left[|\delta\xi|^2+\underset{0\leq s\leq T}{\Sup}|\delta Y_s|^2+\underset{0\leq s\leq T}{\Sup}|\delta Y_s|^2[K_T^{1}+K_T^{2}]+ \Frac{1}{2}\Int_0^T\|\widehat{a}^{1/2}_s\delta Z_s\|^2ds\right].
\end{align*}
The estimate for $\delta Z$ is now obvious from the above inequality and the estimates of (i). Finally the estimate for the difference
of the increasing processes is obvious by definition.
\ep
\end{proof}
\section{Existence by a pathwise construction of the solution}
\label{A direct existence argument}
As we have shown in Theorem \ref{theorep}, if a solution to the $2$BDSDE (\ref{eq}) exists, it necessarily can be represented as a supremum of solutions to standard BDSDEs. However, since we are working under a family of non-dominated probability measures, we cannot use the classical technics of BSDEs to construct such a solution. We will therefore follow the original approach of Soner, Touzi and Zhang \cite{sone:touz:zhan:13}, who overcame this problem by constructing the solution pathwise, using the so-salled regular conditional probability distribution.
\subsection{Notations related to shifted spaces}
For any $0\leq t\leq T$, we denote by $\Omega^{B,t}:= \{\omega\in C([t,T],\R^d),\  \omega(t) = 0\}$ the shifted canonical space, $B^t$
the shifted canonical process, $\P_0^{B,t}$ the shifted Wiener measure and $\Fc^{B,t}$ the shifted raw filtration generated by $B^t$. The pathwise density of its quadratic variation is denoted by $\widehat a^t$. We then let $\Omega^t:=\Omega^{B,t}\times\Omega^W$,  and exactly as in Section 2, we can define the set $\overline{\Pc}_S^t$, by restricting the corresponding measures to the shifted space $\Omega^t$.

\vspace{0.3em}
\noindent Next, for any $0\leq s\leq t\leq T$ and $\omega\in\Omega^s$, we define the shifted path $\omega^t:=(\omega^{B,t},\omega^{W})\in\Omega^t$ by
$$\omega^{B,t}_r:=\omega_r^B-\omega_t^B,\ \forall r\in[t,T],$$
and for $\omega^B\in\Omega^{B,s},  \   \tilde{\omega}^B\in\Omega^{B,t}$ we define the concatenated path
by 
\begin{align*}
(\omega^B\otimes_t\tilde{\omega}^B)(r):=\omega^B_r\indi_{[s,t)}(r)+(\omega^B_t+\tilde{\omega}^B_r)\indi_{[t,T]}(r),\
\forall r\in[s,T].
\end{align*}
Similarly, for any $\Fc_T^s-$measurable random variable $\xi$ on $\Omega^s$, and for each
$(\omega^B,\omega^W)\in\Omega^s$, we define the $\Fc_T^t-$measurable random variable $\xi^{t,\omega^B}$ on $\Omega^t$ by
$$\xi^{t,\omega^B}(\tilde{\omega}^B,\omega^W):=\xi(\omega^B\otimes_t\tilde{\omega}^B,\omega^W),\
\forall(\tilde{\omega}^B,\omega^W)\in\Omega^t.$$
The shifted generators that we consider are, for every $(s,(\tilde{\omega}^B,\omega^W))\in[t,T]\times\Omega^t$
\begin{align*}
\widehat{F}_s^{t,\omega^B}((\tilde{\omega}^B,\omega^W),y,z)&:= F_s((\omega^B\otimes_t\tilde{\omega}^B,\omega^W),y,z,\widehat{a}_s^t(\tilde{\omega}^B,\omega^W)), \\
g_s^{t,\omega^B}((\tilde{\omega}^B,\omega^W),y,z)&:= g_s((\omega^B\otimes_t\tilde{\omega}^B,\omega^W),y,z).
\end{align*}
Then note that since $F$ and $g$ are assumed to be uniformly continuous in $\omega$, then so are the maps $(\omega^B,\omega^W)\longmapsto F_s((\omega^B\otimes_t\cdot,\omega^W),\cdot)$ and $(\omega^B,\omega^W)\longmapsto g_s^{t,\omega^B}((\cdot,\omega^W),\cdot)$. Notice that this implies that for any $\P\in\overline{\Pc}_S^t$
$$ \E^{\P}\left[\left(\Int_t^T |\widehat{F}_s^{t,\omega^B}(0,0)|^2 ds\right)\right]+\E^{\P}\left[\left(\Int_t^T \|g_s^{t,\omega^B}(0,0)\|^{2}ds\right)\right] <+\infty,$$
for some $\omega^B\in\Omega^B$ if and only if it holds for all $\omega^B\in\Omega^B$. 

\vspace{0.3em}
\noindent We also extend Definition \ref{def} in the shifted spaces
\begin{Definition}
$\Pc^{t}$ is the subset of $\overline{\Pc}_S^t$, consisting of measures $\P$ such that
$$\underline{a}_{\P}\leq \widehat{a}^t\leq\overline{a}_{\P} ,~dt\times d\P-a.e.~\text{on}~ [t,T]\times\Omega^{t},\  \text{for some}~ \underline{a}_{\P},\overline{a}_{\P}\in\S_d^{>0},$$
$$ \E^{\P}\left[\left(\Int_t^T |\widehat{F}_s^{t,\omega^B}(0,0)|^{2} ds\right)\right] + \E^{\P}\left[\left(\Int_t^T \|g_s^{t,\omega^B}(0,0)\|^{2}ds\right)\right] <+\infty,~ \text{for all}~ \omega^B\in\Omega^B.$$
\end{Definition}

\vspace{0.5em}
\noindent Finally, by Stroock and Varadhan \cite{SV79}, for any $\F^B-$stopping time $\tau$, any probability measure $\P_B$ on $(\Omega^B,\Fc^B)$, and any $\omega^B\in\Omega^B$, there exists a regular conditional probability distribution (r.p.c.d. for short), $\P_{B,\tau(\omega^B)}^{\omega^B}$ with respect to the $\sigma-$field $\Fc^B_\tau$ (since it is countably generated). Such a measure verifies that for every integrable $\Fc^B_T-$measurable random variable $\xi$, we have for $\P^B-a.e.$ $\omega^B$
$$\E^{\P^B}[\left.\xi\right|\Fc^B_\tau](\omega^B) = \E^{\P_{B,\tau(\omega^B)}^{\omega^B}}[\xi].$$
Furthermore, this r.c.p.d. naturally induces a probability measure $\P_{B}^{\tau(\omega^B),\omega^B}$ on $(\Omega^{B,\tau(\omega^B)},\mathcal F^{B,\tau(\omega^B)}_T)$ such that
$$\E^{\P_{B,\tau(\omega^B)}^{\omega^B}}[\xi]=\E^{\P_{B}^{\tau(\omega^B),\omega^B}}[\xi^{\tau(\omega^B),\omega^B}].$$
Notice that if we consider a stochastic kernel $\{\P_B(\omega^W),\ \omega^W\in\Omega^W\}$ on $(\Omega^B,\Fc^B_T)$, then for any $\omega^B\in\Omega^B$, $\{\P_B^{\tau(\omega^B),\omega^B}(\omega^W),\ \omega^W\in\Omega^W\}$ is a stochastic kernel on $(\Omega^{B,\tau(\omega^B)},\Fc^{B,\tau(\omega^B)}_T)$. 

\vspace{0.5em}
\noindent Let us now consider a $\G-$stopping time $\tau$. Then, using the classical results of Stricker and Yor \cite{SY78}, we know that there is a $\P_0^W$ version of $\tau$ (still denoted $\tau$ for simplicity) such that the map $\omega^B\longmapsto \tau(\omega^B,\omega^W)$ defines a $\F^B-$stopping time for $\P_0^W-a.e.$ $\omega^W\in\Omega^W$. Let again $\{\P_B(\omega^W),\ \omega^W\in\Omega^W\}$ be a stochastic kernel on $(\Omega^B,\Fc^B_T)$ and let us define the measure $\P$ on $(\Omega,\Fc)$ by
$$d\P(\omega^B,\omega^W)=d\P_B(\omega^W;\omega^B)d\P_0^W(\omega^W).$$
We claim (and refer the reader to the proof of the more general result in Lemma \ref{BDSDEshift}) that we can write for $\P-a.e.$ $\omega\in\Omega$
\begin{align*}
\E^{\P}[\left.\xi\right|\mathcal G_\tau](\omega^B,\omega^W)&=\E^{\mathbb P_0^W}\left[\left.\int_{\Omega^B}\xi^{\tau(\omega^B,\cdot),\omega^B}(\widetilde\omega^B)d\P_B^{\tau(\omega^B,\cdot),\omega^B}(\cdot;\widetilde\omega^B)\right|\Fc_T^W\right](\omega^W)\\
&=\mathbb E^{\mathbb P_B^{\tau(\omega^B,\omega^W),\omega^B}(\omega^W)}\left[\xi^{\tau(\omega^B,\omega^W),\omega^B}(\cdot,\omega^W)\right].
\end{align*}
Moreover, by Lemma 4.1 in \cite{sone:touz:zhan:13}, we know that for any probability measure $\P\in\overline{\Pc}_S$ on $(\Omega,\Fc_T)$ such that $d\P(\omega^B,\omega^W)=d\P_B(\omega^W;\omega^B)d\P_0^W(\omega^W)$, we have for $\P-a.e.$ $\omega\in\Omega$, for any $\F^B-$stopping time $\tau$ and for $ds\times d\P^{\tau(\omega^B),\omega^B}(\omega^W)-a.e.\ (s,\tilde\omega^B)\in[\tau(\omega^B),T]\times\Omega^{B,\tau(\omega^B)}$
\begin{equation}\label{eq:aa}
\widehat a_s^{\tau(\omega^B),\omega^B}(\tilde \omega^B,\omega^W)=\widehat a^{\tau(\omega^B)}_s(\tilde\omega^B,\omega^W),
\end{equation}
which justifies the definition of the shifted generator $\widehat F^{t,\omega^B}$.

\subsection{Existence when $\xi$ is in ${\rm{UC}}_b(\Omega)$}
When $\xi$ is in $\text{UC}_b(\Omega)$, we know that there exists a
modulus of continuity function $\rho$ for $\xi, F$ and $g$ in
$\omega$. Then, for any $0\leq t\leq s\leq T, (y,z)\in\R\times\R^d$
and $(\omega^B, \omega^{\prime,B})\in\Omega^B\times\Omega^B,$ $\widetilde{\omega}^B\in\Omega^{B,t}$ and $\omega^W\in\Omega^W$,
\begin{align*}
|\xi^{t,\omega^B}(\widetilde{\omega}^B,\omega^W)- \xi^{t,\omega^{\prime,B}}(\widetilde{\omega}^B,\omega^W)|&\leq \rho(\|\omega^B-\omega^{\prime,B}\|_t),\\
 |\widehat{F}_s^{t,\omega^B}((\widetilde{\omega}^B,\omega^W),y,z)- \widehat{F}_s^{t,\omega^{\prime,B}}
((\widetilde{\omega}^B,\omega^W),y,z)|&\leq \rho(\|\omega^B-\omega^{\prime,B}\|_t),\\
| g_s^{t,\omega^B}((\widetilde{\omega}^B,\omega^W),y,z)- g_s^{t,\omega^{\prime,B}}
((\widetilde{\omega}^B,\omega^W),y,z)|&\leq \rho(\|\omega^B-\omega^{\prime,B}\|_t).
\end{align*}
Using this regularity and Assumption \ref{ass}, it is easy to see that we have for all $(t,\omega^B)\in[0,T]\times\Omega^B$.
$$\Lambda_t(\omega^B):= \underset{\P\in\Pc^{t}}{\Sup}\left(\E^{\P}\left[|\xi^{t,\omega^B}|^2+\Int_t^T|\widehat{F}_s^{t,\omega^B}(0,0)|^2 ds
+\Int_t^T\|g_s^{t,\omega^B}(0,0)\|^2 ds\right]\right)^{1/2} < +\infty.$$ 
To prove existence, we define the following value process $V_t$ for every $\omega^B$ 
\begin{align}\label{definitionV}
V_t(\omega^B,\cdot):=\underset{\P\in\Pc^{t}}{\rm ess \, sup}^{\P^0_W}\, \Yc_t^{\P,t,\omega^B}(T,\xi)(\cdot)\,,\, \P^0_W -a.s., 
\end{align} 
where, for any $(t_1,\omega^B)\in[0,T]\times \Omega^B,~ \P\in\Pc^{t_1}, ~ t_2 \in [t_1,T]$, and any $\Fc_{t_2}-$measurable $\eta\in\L^2(\P)$, we
denote $\Yc_{t_1}^{\P,t_{1},\omega^B}(t_2,\eta):= y_{t_1}^{\P,t_{1},\omega^B}$, where
$(y^{\P,t_{1},\omega^B},z^{\P,t_{1},\omega^B})$ is the solution of the following BDSDE on the shifted space $\Omega^{t_1}$ under $\P$, 
\begin{align}
y_s^{\P,t_{1},\omega^B}=&\ \eta^{t_{1},\omega^B}
+\Int_s^{t_2}\widehat{F}_r^{t_{1},\omega^B}(y_r^{\P,t_{1},\omega^B},z_r^{\P,t_{1},\omega^B})dr
-\Int_s^{t_2} z_r^{\P,t_{1},\omega^B}\cdot dB_r^{t_1} +\Int_s^{t_2}
g_r^{t_{1},\omega^B}(y_r^{\P,t_{1},\omega^B},z_r^{\P,t_{1},\omega^B})\cdot d\W_r. \end{align}
The following Lemma allows to give a link between BDSDEs on the shifted spaces. Its technical proof is postponed to the Appendix.

\vspace{0.5em}
\begin{Lemma}
\label{BDSDEshift}
Fix some $\P\in\overline{\mathcal P}_S$ such that $d\P(\omega^B,\omega^W):=d\P_B(\omega^B;\omega^W)d\P_0^W(\omega^W)$. For $\P-a.e.$ $\omega\in\Omega$, the following equality holds
$$y_t^{\P_B^{t,\omega^B}(\cdot)\otimes\P^0_W}(\omega^W)=y_t^{\P}(\omega^B,\omega^W),\ t\in[0,T],$$
where $d(\P_B^{t,\omega^B}(\cdot)\otimes\P^0_W)(\omega^B,\omega^W):=d\P_B^{t,\omega^B}(\omega^B;\omega^W)d\P_0^W(\omega^W)$.
\end{Lemma}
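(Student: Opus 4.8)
The plan is to establish the identity by a Picard approximation argument, pulling back the equation on $\Omega$ to the shifted space $\Omega^t$ through the concatenation map and then identifying the pulled-back object with the shifted BDSDE by uniqueness. This is the doubly stochastic counterpart of the shifting lemma of Soner, Touzi and Zhang \cite{sone:touz:zhan:13}, the genuinely new feature being the backward integral against $\W$.

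First I would fix $t\in[0,T]$ and, for $\P$-a.e. $\omega^B$, introduce on $\Omega^t$ the candidate process $\tilde y_s(\tilde\omega^B,\omega^W):=y_s^\P(\omega^B\otimes_t\tilde\omega^B,\omega^W)$ for $s\in[t,T]$, together with $\tilde z$ defined analogously from $z^\P$. The goal is to show that $(\tilde y,\tilde z)$ solves the shifted BDSDE \eqref{BDSDE} on $\Omega^t$ under $\P_B^{t,\omega^B}(\cdot)\otimes\P_0^W$; evaluating at $s=t$ and using that $B^t_t=0$ together with the Blumenthal $0$--$1$ law (so that $\tilde y_t$ is $\Fc^W_{t,T}$-measurable, hence a function of $\omega^W$ alone) then delivers the stated equality for $\P$-a.e. $\omega$.

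To carry out the identification term by term: the terminal condition pulls back to $\xi^{t,\omega^B}$ by its very definition; the finite-variation part transforms into $\Int_s^T\widehat F_r^{t,\omega^B}(\tilde y_r,\tilde z_r)dr$ precisely because of \eqref{eq:aa}, which guarantees that the pathwise density $\widehat a_r$ pulls back to $\widehat a_r^t$, so that $\widehat F_r$ becomes $\widehat F_r^{t,\omega^B}$; the forward integral $\Int_s^T z_r^\P\cdot dB_r$ pulls back to $\Int_s^T\tilde z_r\cdot dB_r^t$ under the r.c.p.d. $\P_B^{t,\omega^B}$, which is the standard consequence of the pathwise construction of the stochastic integral (Bichteler \cite{B81}, Karandikar \cite{K95}) combined with Stroock and Varadhan \cite{SV79}; finally the backward integral $\Int_s^T g_r(\cdot)\cdot d\W_r$ pulls back to $\Int_s^T g_r^{t,\omega^B}(\tilde y_r,\tilde z_r)\cdot d\W_r$. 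I would make all of this rigorous at the level of the Picard iterates $(y^{\P,n},z^{\P,n})$, for which each term is linear in the previous iterate, proving by induction that $y_s^{\P_B^{t,\omega^B}(\cdot)\otimes\P_0^W,n}=y_s^{\P,n}(\omega^B\otimes_t\cdot)$ for every $n$, and then passing to the limit: under Assumption \ref{ass}(iii)--(iv) the Picard scheme is a contraction in the relevant $L^2$-type norm on both spaces (this is exactly the Pardoux--Peng fixed point \cite{pp1994}), so the iterates converge to the respective solutions and the identity survives in the limit along a suitable subsequence.

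The main obstacle is the treatment of the backward stochastic integral against $\W$, which has no analogue in the $2$BSDE shifting lemma and is exactly the source of the extra difficulties emphasized in the introduction. The key point that makes it tractable is that both the conditioning and the shift act only on the $B$-coordinate, while the $W$-marginal $\P_0^W$ is left untouched by the disintegration $d\P(\omega^B,\omega^W)=d\P_B(\omega^B;\omega^W)d\P_0^W(\omega^W)$; consequently the $d\W$-differential is unaffected and only the integrand is composed with the concatenation map. To phrase this cleanly $\P$-a.s. I would rely on the aggregation result of Nutz \cite{N12}, which provides a single universally defined version of the two stochastic integrals appearing in \eqref{BDSDE}, so that the pullback identities hold simultaneously for the relevant family of measures, and on Stricker and Yor \cite{SY78} to handle the integration with respect to the parameter $\omega^W$ in a measurable way.
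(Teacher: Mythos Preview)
Your proposal is correct and follows essentially the same route as the paper: reduce to the case where $F$ and $g$ do not depend on $(y,z)$ via Picard iteration (this is the paper's Step~2, which also relies crucially on \eqref{eq:aa}), and in that linear case verify that each term---terminal condition, drift, forward integral against $B$, and above all the backward integral against $\W$---pulls back correctly under the concatenation map, the last point being handled through Stricker--Yor. The only cosmetic difference is that the paper phrases the linear step as an explicit computation of the conditional expectation $\E^\P[\cdot\,|\,\Gc_t]$ and establishes the key commutation identity (conditional expectation of $\int_t^T g_s\cdot d\W_s$ equals the backward integral of the conditionally expected integrand) first for simple processes and then by density, whereas you invoke the aggregated/parametric versions of the integrals directly; the content is the same.
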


\vspace{0.5em}
\noindent We point out that for classical $2$BSDEs, Soner, Touzi and Zhang have proved in Lemma 4.6 of \cite{sone:touz:zhan:13} a regularity result for the value process, precisely the uniform continuity with respect to the trajectory $\omega^B$ and this is crucial to prove their dynamic programming principle (Proposition 4.7 in \cite{sone:touz:zhan:13}). Since in our context, the value process $V$ defined in \eqref{definitionV} is a random field depending on two source of randomness, we prove the following regularity result which is weaker than Lemma 4.6 of \cite{sone:touz:zhan:13}. Once again, we cannot obtain the same regularity in the context of doubly stochastic 2BSDEs because we cannot have path--wise estimates for their solutions.
\vspace{0.5em}
\begin{Lemma}
\label{mesurabiliteV}We have for every $(\omega^{B,1},\omega^{B,2})\in\Omega^B\times\Omega^B$
$$\E^{\P^0_W}\left[\left(V_t(\omega^{B,1}\cdot)-V_t(\omega^{B,2}\cdot)\right)^2\right]\leq \rho^2\left(\|\omega^{B,1}-\omega^{B,2}\|_t\right).$$
In particular, this implies that the map $\omega^B\longmapsto V_t(\omega^B,\cdot)$ is uniformly continuous in probability $($with respect to $\P^0_W)$, which implies that there is a $\P^0_W-$version, which we still denote $V$ for simplicity, which is jointly measurable in $(\omega^B,\omega^W)$, and more precisely, such that $V_t$ is $\Fc_t-$measurable or even $\Fc^B_t\otimes\Fc^{o,W}_{t,T}-$measurable.
\end{Lemma}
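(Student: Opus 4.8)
The plan is to establish the quantitative estimate first, and then extract the measurability statement as a soft consequence. The key observation is that the difference $V_t(\omega^{B,1},\cdot)-V_t(\omega^{B,2},\cdot)$ is the difference of two essential suprema of families of BDSDE solutions indexed by $\P\in\Pc^t$. I would begin by controlling, for a \emph{fixed} $\P\in\Pc^t$, the quantity $\E^{\P^0_W}[(\Yc_t^{\P,t,\omega^{B,1}}(T,\xi)-\Yc_t^{\P,t,\omega^{B,2}}(T,\xi))^2]$. Because both processes solve the same shifted BDSDE \reff{BDSDE} under the same measure $\P$ but with shifted data $(\xi^{t,\omega^{B,1}},\widehat F^{t,\omega^{B,1}},g^{t,\omega^{B,1}})$ and $(\xi^{t,\omega^{B,2}},\widehat F^{t,\omega^{B,2}},g^{t,\omega^{B,2}})$ respectively, I would apply the standard stability estimate for Lipschitz BDSDEs (Pardoux--Peng \cite{pp1994}), which bounds $\E^\P[\sup|y^1-y^2|^2]$ by the $L^2(\P)$-distance between terminal conditions plus the integrated distance between generators. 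The crucial point is that the uniform-continuity-in-$\omega^B$ bounds displayed just before the Lemma give, pointwise in $(s,\tilde\omega^B,\omega^W,y,z)$,
\begin{align*}
|\xi^{t,\omega^{B,1}}-\xi^{t,\omega^{B,2}}|\leq \rho(\|\omega^{B,1}-\omega^{B,2}\|_t),\ |\widehat F^{t,\omega^{B,1}}-\widehat F^{t,\omega^{B,2}}|\leq \rho(\|\omega^{B,1}-\omega^{B,2}\|_t),
\end{align*}
and likewise for $g$. Feeding these \emph{deterministic} bounds (they do not depend on $s,\tilde\omega^B,\omega^W,y,z$) into the stability estimate yields a bound of the form $C\rho^2(\|\omega^{B,1}-\omega^{B,2}\|_t)$ that is \emph{uniform in} $\P\in\Pc^t$.

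Next I would pass from the fixed-$\P$ bound to the essential suprema. The standard trick is that for nonnegative (or real) families $A^\P,B^\P$ one has $|\esssup_\P A^\P-\esssup_\P B^\P|\leq \esssup_\P|A^\P-B^\P|$; applied here under $\P^0_W$ this gives pointwise (in $\omega^W$)
$$
|V_t(\omega^{B,1},\cdot)-V_t(\omega^{B,2},\cdot)|\leq \underset{\P\in\Pc^t}{\esssup}^{\P^0_W}\,|\Yc_t^{\P,t,\omega^{B,1}}-\Yc_t^{\P,t,\omega^{B,2}}|.
$$
Since the right-hand side of the fixed-$\P$ estimate does not depend on $\P$, taking the essential supremum over $\P$ preserves the bound, and squaring then taking $\E^{\P^0_W}$ yields the claimed inequality $\E^{\P^0_W}[(V_t(\omega^{B,1},\cdot)-V_t(\omega^{B,2},\cdot))^2]\leq \rho^2(\|\omega^{B,1}-\omega^{B,2}\|_t)$ (absorbing the constant into the modulus $\rho$, as the authors implicitly do). I would need to be careful that the essential supremum over the non-dominated family $\Pc^t$ is well defined and that the $\P^0_W$-versions are chosen coherently, but the family is upward directed by the same argument used in Lemma \ref{eq:mincond}, so a countable maximizing sequence exists and the manipulations are justified.

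The measurability conclusion then follows by a now-routine argument. Uniform continuity in $L^2(\P^0_W)$ in the variable $\omega^B$ means $\omega^B\mapsto V_t(\omega^B,\cdot)$ is a uniformly continuous map from $(\Omega^B,\|\cdot\|_t)$ into $L^2(\Omega^W,\P^0_W)$; since $\Omega^B$ is separable, one can define a jointly measurable version by approximating $\omega^B$ along a countable dense set and taking limits in $L^2(\P^0_W)$, the limit being independent (up to $\P^0_W$-null sets) of the approximating sequence by the continuity estimate. The resulting version is $\mathcal B(\Omega^B)\otimes\mathcal F^W_{t,T}$-measurable, and using that $V_t$ is built from data depending on $B$ only up to time $t$ and on the increments of $W$ on $[t,T]$ (the shifted generators and $\xi^{t,\omega^B}$ depend on $\omega^B$ only through its restriction to $[0,t]$, and the BDSDE on $\Omega^t$ involves only $B^t$ and $W$ on $[t,T]$), one identifies the measurability as $\Fc^B_t\otimes\Fc^{o,W}_{t,T}$, hence $\Fc_t$-measurability.

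The main obstacle I anticipate is the rigorous passage through the essential supremum over the non-dominated family $\Pc^t$: one must verify that the fixed-$\P$ stability estimate is genuinely uniform in $\P$ (which it is, precisely because the shift-continuity bounds are deterministic constants independent of the path $\tilde\omega^B$ and of $\omega^W$) and that interchanging $\esssup^{\P^0_W}_\P$ with the bound is legitimate. The stability estimate for BDSDEs itself is the standard Gronwall-type argument, and the contraction condition $0\leq\alpha<1$ on $g$ in $z$ together with Assumption \ref{ass}(iv) is what makes that estimate available with a constant independent of $\P$; I would invoke it rather than reprove it. The measurability upgrade is technical but standard once the $L^2(\P^0_W)$-continuity is in hand.
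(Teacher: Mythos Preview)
Your overall strategy coincides with the paper's (very brief) proof: BDSDE stability estimates combined with the uniform continuity in $\omega^B$ of the shifted data give the $L^2(\P^0_W)$ estimate, and the measurability is then a soft consequence (the paper cites Dellacherie--Meyer, which is essentially your separable--domain argument).

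There is, however, a gap precisely at the step you yourself flag as the main obstacle. For each fixed $\P$ you obtain $\E^{\P^0_W}[|\delta y_t^\P|^2]\leq C\rho^2$ uniformly in $\P$, and then assert that ``taking the essential supremum over $\P$ preserves the bound''. But after the pointwise inequality $|V_t(\omega^{B,1},\cdot)-V_t(\omega^{B,2},\cdot)|\leq\esssup_\P|\delta y_t^\P|$, what you actually need is $\E^{\P^0_W}\big[(\esssup_\P|\delta y_t^\P|)^2\big]\leq C\rho^2$, and supremum and expectation do not commute: a uniform--in--$\P$ bound on $\E[|\delta y_t^\P|^2]$ does not yield an $L^2$ bound on the essential supremum, and the family $\{|\delta y_t^\P|\}_\P$ has no reason to be upward directed even though each $\{y_t^{\P,i}\}_\P$ is. The paper's proof refers to Theorem \ref{thestiamte} for the mechanism that closes this gap: one linearizes the difference BDSDE, represents $|\delta y_t^{\P'}|$ as a conditional expectation $\E_t^{\P'}[\cdots]$, takes the esssup to reach $\E_t^{\Pc,\P}[\cdots]$, and then applies the Doob--type inequality $\sup_\P\E^\P\big[(\E_t^{\Pc,\P}[A])^{2}\big]\leq C\sup_\P(\E^\P[A^{2+\eps}])^{2/(2+\eps)}$, the right--hand side being controlled by the $L^{2+\eps}$ BDSDE estimates of Theorem \ref{th:estimees} under Assumption \ref{ass1}. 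Without this device (or an equivalent one), the passage from the fixed--$\P$ stability estimate to the estimate on $V$ is not justified.
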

\vspace{0.5em}
\begin{proof}
The estimate is an easy consequence of classical a priori estimates for BDSDEs, using in particular the uniform continuity in $\omega$ of both $F$, $g$ and $\xi$. The reasoning is quite similar to the one we used in the proof of Theorem \ref{thestiamte}, so that we omit it. As for the existence of measurable version, this is a classical result using the fact that the topology of convergence in probability is metrizable (see for instance Dellacherie and Meyer \cite{dm}, chapter IV, Theorem 30, or the proof of Corollary A.3 in \cite{DKN07}).\ep
\end{proof}

\vspace{0.5em}
\noindent We then have the following joint measurability result
\begin{Lemma}\label{mes2}
The map $(t,\omega^B,\omega^W)\longmapsto V_t(\omega^B,\omega^W)$ is $\Bc([0,T])\otimes\Fc_t-$measurable.
\end{Lemma}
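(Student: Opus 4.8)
The goal is to upgrade the fixed-time measurability statement of Lemma \ref{mesurabiliteV}, namely that each $V_t$ is $\Fc_t$-measurable (indeed $\Fc^B_t\otimes\Fc^{o,W}_{t,T}$-measurable), to the \emph{joint} measurability of the map $(t,\omega^B,\omega^W)\longmapsto V_t(\omega^B,\omega^W)$ with respect to $\Bc([0,T])\otimes\Fc_t$. The plan is to exploit the pathwise-in-$t$ regularity of $V$ together with the fixed-time spatial measurability already established, and assemble them into joint measurability via a standard time-discretization/approximation argument.

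First I would recall the structure of $V$ coming from \eqref{definitionV}: at each time $t$, $V_t$ is the essential supremum over $\P\in\Pc^t$ of the initial values $\Yc^{\P,t,\omega^B}_t(T,\xi)$ of shifted BDSDEs. The BDSDE a priori estimates (as in Theorem \ref{thestiamte} and Theorem \ref{th:estimees}), applied now with respect to the time variable, yield a modulus of continuity for $t\longmapsto V_t$ in an $L^2(\P^0_W)$ sense: shifting the terminal horizon and the generator from $t$ to $t'$ changes the data by a controlled amount, so one obtains an estimate of the form $\E^{\P^0_W}[(V_t(\omega^B,\cdot)-V_{t'}(\omega^B,\cdot))^2]\leq \omega_0(|t-t'|)$ for some modulus $\omega_0$, uniformly in $\omega^B$. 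This is the exact time-analogue of the spatial estimate proved in Lemma \ref{mesurabiliteV}, and it is obtained by the same comparison and linearization machinery; I would only sketch it, since the computation is routine given the a priori estimates.

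With both moduli in hand, I would discretize time: for each $n$, set $t^n_k:=kT/2^n$ and define the piecewise-constant (in time) approximation $V^n_t:=V_{t^n_k}$ for $t\in[t^n_k,t^n_{k+1})$. Each $V^n$ is jointly measurable because it is a finite patching (over the Borel time-intervals $[t^n_k,t^n_{k+1})$) of the fixed-time random variables $V_{t^n_k}$, which are $\Fc_{t^n_k}$-measurable by Lemma \ref{mesurabiliteV}; since $\Fc_{t^n_k}\subset \Gc_{t^n_k}\subset\Gc_T=\Fc$, each slice is $\Fc$-measurable, so $V^n$ is $\Bc([0,T])\otimes\Fc$-measurable. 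The time-modulus estimate gives $\E^{\P^0_W}[(V^n_t-V_t)^2]\leq \omega_0(T/2^n)\to 0$ uniformly in $t$, hence $V^n\to V$ in measure (with respect to, say, $\mathrm{Leb}\otimes\P^0_W$ on $[0,T]\times\Omega^W$, for each fixed $\omega^B$, and then jointly). Passing to a subsequence converging pointwise a.e., the limit is jointly measurable, and since the limit agrees with $V$ outside a null set I obtain a jointly measurable version; identifying it with $V$ (as we are already working with the $\P^0_W$-version selected in Lemma \ref{mesurabiliteV}) gives the claim. The refinement to $\Bc([0,T])\otimes\Fc_t$ rather than merely $\Bc([0,T])\otimes\Fc$ follows because on the slice $\{t\}\times\Omega$ the relevant $\sigma$-field is $\Fc_t$, and the patching respects the filtration in the sense that on $[t^n_k,t^n_{k+1})$ the slice is $\Fc_{t^n_k}$-measurable; combined with the continuity in $t$ this yields progressive-type measurability adapted to the family $(\Fc_t)$.

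The main obstacle, and the point requiring genuine care, is that $\F=(\Fc_t)$ is \emph{not} a filtration (as emphasized in the preliminaries: $\Fc_t=\Fc^B_t\vee\Fc^W_{t,T}$ is neither increasing nor decreasing), so one cannot invoke the classical theorem that a process with measurable-in-time, adapted-in-$\omega$ slices and one-sided path regularity is automatically progressively measurable. I would therefore argue the joint measurability directly through the approximation scheme above, where the only structural input needed is the fixed-time measurability from Lemma \ref{mesurabiliteV} and the $L^2$-continuity in $t$; this sidesteps the non-monotone filtration entirely. A secondary subtlety is that $V_t$ involves an essential supremum over the non-dominated shifted family $\Pc^t$, which is defined only up to $\P^0_W$-null sets; I would handle this by fixing once and for all the $\P^0_W$-version produced in Lemma \ref{mesurabiliteV}, so that the slices $V_{t^n_k}$ are honest $\Fc_{t^n_k}$-measurable functions and the patching and limiting operations are well defined on the nose rather than merely modulo null sets.
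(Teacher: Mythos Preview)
Your approach via time--discretization is genuinely different from the paper's, and it contains a real gap.

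The paper does not use any pathwise or $L^2$ regularity of $t\mapsto V_t$ at this stage. Instead, it argues through measurable selection in the spirit of \cite{PTZ14}: the family $\{\Yc_t^{\P,t,\omega^B}(T,\xi),\ \P\in\Pc^t\}$ is upward directed, so the essential supremum is realised along a countable sequence; then one invokes the construction of \cite[Section 2.5.2]{PTZ14} to obtain that the map $(t,\P,\omega^B,\omega^W)\longmapsto \Yc^{\P,t,\omega^B}_t(\omega^W)$ is Borel measurable, and taking the countable supremum yields that $(t,\omega^B,\omega^W)\longmapsto V_t(\omega^B,\omega^W)$ is $\Bc([0,T])\otimes\Fc_T^B\otimes\Fc^{o,W}_{0,T}$--universally measurable. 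The refinement to $\Fc_t$--measurability of each slice then comes from Lemma \ref{mesurabiliteV}. No time regularity of $V$ is needed, which is important because such regularity is only obtained \emph{later} (Lemma \ref{Lemmaregularity}, for $V^+$) and relies on the dynamic programming principle, which itself uses Lemma \ref{mes2}.

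The gap in your argument is precisely the claimed modulus $\E^{\P^0_W}[(V_t(\omega^B,\cdot)-V_{t'}(\omega^B,\cdot))^2]\leq \omega_0(|t-t'|)$. You present this as ``the exact time--analogue'' of the spatial estimate in Lemma \ref{mesurabiliteV}, but it is not. In the spatial estimate one compares $\Yc^{\P,t,\omega^{B,1}}_t$ and $\Yc^{\P,t,\omega^{B,2}}_t$ for the \emph{same} $\P\in\Pc^t$, on the \emph{same} shifted space $\Omega^t$, so the supremum is over a common index set and standard BDSDE stability applies termwise. In the time direction, $V_t$ and $V_{t'}$ are suprema over the \emph{different} sets $\Pc^t$ and $\Pc^{t'}$, living on different shifted spaces $\Omega^t$ and $\Omega^{t'}$. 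Relating these suprema requires a concatenation/restriction argument for the measures together with a flow property for the BDSDE solutions---essentially a form of dynamic programming---none of which is available at this point of the paper, and which you do not supply. The phrase ``shifting the terminal horizon and the generator from $t$ to $t'$ changes the data by a controlled amount'' does not capture this difficulty: the terminal horizon is fixed at $T$, and what changes is the initial time and hence the entire optimisation set. Without this step your piecewise--constant approximants $V^n$ need not converge to $V$, and the argument collapses.
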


\proof
First of all, we claim that the family $\{\Yc_t^{\P,t,\omega^B}(T,\xi),\ \P\in\Pc^t\}$ is upward directed. Indeed, this can be proved exactly as in Step (iii) of the proof of Theorem 4.3 of \cite{STZ10}. As a consequence, we know that there is a sequence $(\P^n)_{n\geq 0}\subset\Pc^t$ such that for $\P_0^W-a.e.$ $\omega^W\in\Omega^W$
$$V_{t}(\omega^B,\omega^W) =\underset{n\geq 0}{\sup}\ \Yc^{\P^n,t\omega^B}_t(\omega^W).$$
Now arguing exactly as in Step (i) of the proof of Theorem 2.1 in \cite{PTZ14}, using in particular the fact that we can always mimic the construction in Section 2.5.2 of \cite{PTZ14} to obtain that the map $(t,\P,\omega^B,\omega^W)\longmapsto \Yc^{\P,t\omega^B}_t(\omega^W)$ is Borel measurable, we deduce that $(t,\omega^B,\omega^W)\longmapsto V_t(\omega^B,\omega^W)$ is $\Bc([0,T])\otimes\Fc_T^{B}\otimes\Fc_{0,T}^{o,W}-$universally measurable. But then it suffices to use the result of Lemma \ref{mesurabiliteV} to conclude.
\ep

\vspace{0.7em}
\noindent Now, we present the main result concerning the dynamic programming principle in our context. We follow the approach of Possama\"i, Tan and Zhou \cite{PTZ14}, where they proved existence result for $2$BSDEs with only measurable parameters. Their proof is based on dynamic programming principle without regularity on the terminal condition and the generator, which is itself strongly inspired by the classical results recalled, for instance, in the papers \cite{elkarouitan1,elkarouitan2}. We therefore omit the proof.

\begin{Theorem}\label{dynam prog}
Under the Assumptions \ref{ass}, \ref{ass1} and for
$\xi\in{\rm UC}_b(\Omega)$, we have for all $0 \leq  t_1\leq t_2\leq T$
 \begin{align} V_{t_1}(\omega^B, \omega^W) =
\underset{\P\in\Pc^{t_1}}{{\rm ess \, sup}^\P}\ \Yc_{t_1}^{\P,t_{1},\omega^B}(t_2,V_{t_2}^{t_{1},\omega^B } (\cdot,\omega^W)) , \; \, \P-a.e.\ \omega\in\Omega.
\end{align}
\end{Theorem}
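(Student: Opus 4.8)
The plan is to prove the dynamic programming identity as two separate inequalities, following the measurable-selection strategy of \cite{PTZ14} rather than the regularity-based approach of \cite{sone:touz:zhan:13}, since the extra backward $\W$-integral deprives us of the pathwise estimates on which the latter relies (we only dispose of the weaker $L^2$-regularity of Lemma \ref{mesurabiliteV}). The two workhorses throughout will be the comparison principle for BDSDEs \cite{SGL05}, which makes the map $\eta\longmapsto \Yc_{t_1}^{\P,t_1,\omega^B}(t_2,\eta)$ monotone, and the a priori estimate of Theorem \ref{th:estimees}, which makes it continuous in its terminal datum; the passage between the original and the shifted BDSDEs provided by Lemma \ref{BDSDEshift} will convert conditioning and concatenation of measures into the flow property of the BDSDE.

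For the inequality $V_{t_1}\geq {\rm ess\,sup}_{\P\in\Pc^{t_1}}\Yc_{t_1}^{\P,t_1,\omega^B}(t_2,V_{t_2}^{t_1,\omega^B})$, I would fix $\P\in\Pc^{t_1}$ and use the upward directedness of $\{\Yc_{t_2}^{\Q,t_2,\omega^B}(T,\xi),\ \Q\in\Pc^{t_2}\}$ established in Lemma \ref{mes2}: there is a sequence $(\Q^n)_n\subset\Pc^{t_2}$ with $V_{t_2}=\sup_n\uparrow\Yc_{t_2}^{\Q^n,t_2,\omega^B}(T,\xi)$, $\P_0^W$-a.s. Concatenating $\P$ on $[t_1,t_2]$ with $\Q^n$ after $t_2$ through the r.c.p.d. produces a measure $\bar\P_n\in\Pc^{t_1}$, and Lemma \ref{BDSDEshift} together with the flow property yields $\Yc_{t_1}^{\bar\P_n,t_1,\omega^B}(T,\xi)=\Yc_{t_1}^{\P,t_1,\omega^B}(t_2,\Yc_{t_2}^{\Q^n,t_2,\cdot}(T,\xi))$. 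Letting $n\to\infty$ and invoking the continuity of the solution map in the terminal condition (Theorem \ref{th:estimees}), the right-hand side converges to $\Yc_{t_1}^{\P,t_1,\omega^B}(t_2,V_{t_2}^{t_1,\omega^B})$, while the left-hand side is dominated by $V_{t_1}$ by definition; taking the essential supremum over $\Pc^{t_1}$ closes this half.

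For the reverse inequality it suffices to bound each $\Yc_{t_1}^{\P,t_1,\omega^B}(T,\xi)$ by the right-hand side. Fixing $\P\in\Pc^{t_1}$, the flow property splits the BDSDE at $t_2$ as $\Yc_{t_1}^{\P,t_1,\omega^B}(T,\xi)=\Yc_{t_1}^{\P,t_1,\omega^B}(t_2,y_{t_2}^{\P})$, where $y_{t_2}^{\P}$ is the value at $t_2$ of the same equation. Applying Lemma \ref{BDSDEshift} at time $t_2$, for $\P$-a.e. $\omega$ one has $y_{t_2}^{\P}(\omega)=\Yc_{t_2}^{\P^{t_2,\omega^B},t_2,\omega^B}(T,\xi)$ for the conditioned measure $\P^{t_2,\omega^B}\in\Pc^{t_2}$, so that $y_{t_2}^{\P}\leq V_{t_2}^{t_1,\omega^B}$ by the very definition of $V_{t_2}$. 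The comparison principle \cite{SGL05} then gives $\Yc_{t_1}^{\P,t_1,\omega^B}(t_2,y_{t_2}^{\P})\leq \Yc_{t_1}^{\P,t_1,\omega^B}(t_2,V_{t_2}^{t_1,\omega^B})$, which is dominated by the essential supremum over $\Pc^{t_1}$; taking the essential supremum over $\P$ on the left concludes.

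The main obstacle lies not in the soft lattice and limiting arguments above but in the two measure-manipulation identities: the concatenation $\bar\P_n\in\Pc^{t_1}$ together with its flow decomposition, and the conditioning identity $y_{t_2}^{\P}=\Yc_{t_2}^{\P^{t_2,\omega^B},t_2,\omega^B}(T,\xi)$. In the doubly stochastic setting these are genuinely more delicate than for classical $2$BSDEs, because the measures act only on the $B$-component while the $W$-marginal is frozen to $\P_0^W$, and every gluing must remain compatible with the backward $\W$-integral. It is precisely Lemma \ref{BDSDEshift} and the stochastic-kernel structure of $\Pc$ that make these manipulations legitimate, and it is their verification, rather than the dynamic programming skeleton sketched here, that carries the technical weight, which is why the detailed bookkeeping is organized exactly as in \cite{PTZ14}.
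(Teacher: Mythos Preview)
Your proposal is correct and aligns with the paper's own treatment: the paper omits the proof entirely and simply refers to the approach of \cite{PTZ14}, which is exactly the measurable-selection/concatenation strategy you sketch, with Lemma~\ref{BDSDEshift} playing the role of the shifting identity needed to accommodate the extra $\W$-randomness. Your two-inequality skeleton (concatenation plus monotone passage to the limit for $\geq$, conditioning plus comparison for $\leq$) is the standard DPP argument, and you correctly flag that the genuine technical content lies in making the concatenation $\bar\P_n\in\Pc^{t_1}$ measurable in the intermediate path, which is precisely what the \cite{PTZ14} machinery supplies.
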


\noindent Next, we introduce the right limit of the V which is clearly $\Fc_{t^+}-$measurable
\begin{align} \label{defV+}
V_t^{+}:= \underset{r\in\Q\cap(t,T],r\downarrow
t}{\overline{\Lim}} V_r. \end{align}
We have the following regularity result, whose proof is postponed until the appendix.
\begin{Lemma}\label{Lemmaregularity}
Under the Assumptions \ref{ass}, \ref{ass1}, we have
$$V_t^{+}= \underset{r\in\Q\cap(t,T],r\downarrow t}{{\Lim}}V_r, \ \Pc -q.s.$$ and thus $V^{+}$ is c\`adl\`ag
$\Pc -q.s.$
\end{Lemma}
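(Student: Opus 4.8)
The plan is to establish the regularity of the value process $V^+$ by showing that the limit along rationals from the right actually exists (not just the $\overline{\Lim}$ used in the definition \eqref{defV+}), and that the resulting process is c\`adl\`ag quasi-surely. The key structural tool is the dynamic programming principle of Theorem \ref{dynam prog}, which gives us a supermartingale-type decomposition of $V$ under each fixed measure. Specifically, I would first fix $\P\in\Pc$ and use Theorem \ref{dynam prog} together with the comparison principle and the representation of $\Yc^{\P,t_1,\omega^B}$ as a solution of a BDSDE to show that for each $\P\in\Pc$, the process $V$ (or rather a suitable modification) is a $\P-$supermartingale after a change of variables removing the generator contribution. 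The natural approach is to introduce the process
$$
\widetilde V_t := V_t + \Int_0^t \widehat F_s(V_s, Z_s)\, ds + \Int_0^t g_s(V_s, Z_s)\cdot d\W_s,
$$
or some analogous compensated quantity, so that $\widetilde V$ becomes a $(\P,\G)-$supermartingale for every $\P\in\Pc$, by virtue of the essential-supremum structure in \eqref{definitionV} and the dynamic programming principle.

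Once the supermartingale property is in hand, I would invoke the classical downcrossing inequality for supermartingales (in the spirit of the standard proof that supermartingales admit c\`adl\`ag modifications, e.g.\ as in Dellacherie and Meyer \cite{dm}). The downcrossing inequality guarantees that, outside a $\P-$null set, the limit $\Lim_{r\in\Q\cap(t,T],\ r\downarrow t} V_r$ exists for every $t$ simultaneously, which upgrades the $\overline{\Lim}$ in \eqref{defV+} to a genuine limit $\P-a.s.$ Since this holds for every $\P\in\Pc$, the existence of the right limit holds $\Pc-q.s.$, giving $V_t^+=\Lim_{r\in\Q\cap(t,T],\ r\downarrow t}V_r$, $\Pc-q.s.$ The c\`adl\`ag property then follows from the same supermartingale regularization argument: a supermartingale whose right limits along rationals exist admits a c\`adl\`ag modification, and the compensating integral terms are continuous, so $V^+$ inherits c\`adl\`ag paths $\Pc-q.s.$

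The main obstacle I anticipate is \textbf{handling the extra backward integral term} $\Int g_s\cdot d\W_s$, which as the authors repeatedly emphasize prevents path--wise estimates and complicates the supermartingale analysis, since $V$ depends on the two sources of randomness $(\omega^B,\omega^W)$ simultaneously. The compensator one must add to turn $V$ into a supermartingale is no longer a finite-variation drift alone; it contains a genuine stochastic integral against $\W$, and one must verify carefully that this does not destroy the supermartingale structure needed for the downcrossing argument. The clean way around this is to work under a fixed $\P\in\Pc$ and condition appropriately: using the product structure $d\P(\omega^B,\omega^W)=d\P_B(\omega^W;\omega^B)d\P_0^W(\omega^W)$ together with Lemma \ref{mesurabiliteV} (which already gives the joint measurability and the $\Fc_t-$measurability of $V_t$), one can freeze $\omega^W$ and reduce the supermartingale property to a statement about BDSDEs for which the comparison principle applies fibrewise. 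A secondary technical point is ensuring that the null sets coming from the downcrossing argument can be chosen uniformly enough to yield a genuine $\Pc-q.s.$ statement rather than a collection of $\P-a.s.$ statements; this is handled by the aggregation result of Nutz \cite{N12} already invoked in the paper, which lets us select a single process representing $V^+$ across all measures.

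Finally, I would need the a priori estimates of Theorem \ref{thestiamte} (or their shifted-space analogues) to guarantee the required integrability of $V$ and of the compensator terms, so that the supermartingale convergence and downcrossing theorems genuinely apply; these estimates ensure $V\in\D^2$ with uniform bounds, which is exactly the integrability needed to justify the regularization. Given that the hard analytical inputs---the dynamic programming principle, the comparison principle for BDSDEs, and the $L^{2+\eps}$ estimates---are all already available from the earlier results in the excerpt, the proof should be a careful but essentially standard application of supermartingale regularization theory adapted to the doubly stochastic, quasi-sure setting.
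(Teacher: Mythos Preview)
Your overall strategy---use the dynamic programming principle to obtain a supermartingale-type property, then invoke a downcrossing inequality to get existence of right limits along rationals---matches the paper's approach. However, your proposed compensator
\[
\widetilde V_t = V_t + \Int_0^t \widehat F_s(V_s, Z_s)\, ds + \Int_0^t g_s(V_s, Z_s)\cdot d\W_s
\]
is circular: at this point in the argument no process $Z$ attached to $V$ has been constructed (that only comes later, in Proposition~\ref{DecompositionV+}, \emph{after} Lemma~\ref{Lemmaregularity} is established). So you cannot add integrals of $\widehat F(V,Z)$ or $g(V,Z)$ to $V$ and hope to get a genuine supermartingale.

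The paper sidesteps this by subtracting from $V$ the solution $\overline{\Yc}^\P(T,\xi)$ of the BDSDE under $\P$ with the same generators and terminal data. The difference $\widetilde V^\P := V - \overline{\Yc}^\P$ is then non-negative and, by the dynamic programming principle together with comparison, is a \emph{weak doubly $f^\P$-supermartingale} in the sense of the appendix (Definition~\ref{doublymartingale}), for a shifted generator $f^\P$ satisfying $f^\P(0,0)=0$. This is the correct nonlinear replacement for your linear compensator. The paper then treats first the case where $g$ does not depend on $(y,z)$: in that case one can add the (now well-defined) process $\int_0^t \widehat g_s\cdot d\W_s$ and reduce to a standard $f$-supermartingale in the sense of Peng, to which the downcrossing inequality of Coquet--Peng \cite{CP2000} (together with Doob's classical inequality) applies. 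The general case, with $g$ depending on $(y,z)$, is then recovered by a Banach fixed-point argument. Your proposal does not contain this two-step reduction, and the classical supermartingale downcrossing you invoke is not directly applicable to a doubly $f$-supermartingale; the nonlinear version from \cite{CP2000} is needed.
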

\noindent Thanks to the dynamic programming principle for $V$, as well as the just proved regularity of $V^+$, we can now show that $V^+$ is actually a semi--martingale under any $\P\in\Pc$ and admits a particular decomposition under any $\P\in\mathcal P$.
\begin{Proposition}\label{DecompositionV+}
Under Assumptions \ref{ass}, \ref{ass1}, for any $\P\in\Pc$, denoting by $\G^\P_+$ the usual augmentation of the right-limit of $\G$ under $\P$, there is a $\G^\P_+-$predictable process $\widetilde Z^\P$, which is also $\overline\Fc_t^\P-$mesurable for a.e. $t\in[0,T]$, and a non-decreasing c\`adl\`ag and $\G^\P_+-$predictable process $\widetilde{K}^\P$, which is also $\overline\Fc_t^\P-$mesurable for a.e. $t\in[0,T]$, such that $V^+$ defined by \eqref{defV+} satisfies for all $0\leq t\leq s\leq T$
\begin{align*}
V_s^{+} = \xi+\Int_s^T\widehat{F}_r(V_r^{+},\widetilde{Z}^{\P}_r)ds+\Int_s^T g_r(V_r^{+},\widetilde{Z}^{\P}_r)\cdot d\W_r-\Int_s^T
\widetilde{Z}^{\P}_r \cdot dB_r+\widetilde{K}_T^{\P}-\widetilde{K}_s^{\P}, \ \P-a.s.
\end{align*}
\end{Proposition}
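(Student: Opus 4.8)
The plan is to follow the classical route for second--order equations (Soner, Touzi and Zhang, Proposition~4.10 in \cite{sone:touz:zhan:13}): to realize $V^+$ as a supersolution of the BDSDE \eqref{eq} under each fixed $\P$, to extract its finite--variation part by a Doob--Meyer argument, and to recover the integrand $\widetilde Z^\P$ through the martingale representation property enjoyed by every $\P\in\Pc$. The genuine novelty, and the main source of difficulty, is the backward It\^o integral against $W$; to keep it under control I would work conditionally on $\Fc_T^W$, exploiting that $\Fc_T^W\subseteq\Gc_0$, so that after freezing the $W$--path the analysis reduces to the doubly stochastic framework of Pardoux and Peng \cite{pp1994} for a fixed driver in the $B$--filtration.

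First I would fix $\P\in\Pc$ and apply the dynamic programming principle of Theorem \ref{dynam prog} with the choice $\P'=\P$, translating back to the global space via Lemma \ref{BDSDEshift}. This yields $V_{t_1}\geq y_{t_1}^{\P}(t_2,V_{t_2})$, $\P$-a.s. for $t_1\le t_2$, that is, $V$ dominates the BDSDE solution issued from its own future values, and using the regularity of $V^+$ from Lemma \ref{Lemmaregularity} the same holds for the c\`adl\`ag version $V^+$. Writing out the dominated BDSDE and rearranging, this says precisely that $V^+$ is a $\P$--supersolution of \eqref{eq}. After the standard linearization of $\widehat F$ and $g$ along $V^+$ (using the Lipschitz Assumption \ref{ass}(iii)) and the exponential weighting already used in the proof of Theorem \ref{theorep}, the suitably transformed process becomes, at least conditionally on $\Fc_T^W$, a supermartingale in the $B$--filtration; its integrability is controlled by the a priori estimates of Theorem \ref{thestiamte}. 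A Doob--Meyer decomposition then produces a predictable, non--decreasing process which, after undoing the transformation, furnishes the candidate $\widetilde K^\P$, together with a martingale remainder.

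Next I would apply the martingale representation property to the $(\P,\G)$--martingale part, obtaining a $\G$--predictable process $\widetilde Z^\P$ with $dM^\P=\widetilde Z^\P\cdot dB$; observe that since $\Fc_T^W\subseteq\Gc_0$ this representation legitimately involves only $B$, which is exactly why the $W$--driven fluctuations must be bookkept separately and cannot be absorbed into $M^\P$. To identify the drift and the backward integrand as $\widehat F_r(V_r^+,\widetilde Z^\P_r)$ and $g_r(V_r^+,\widetilde Z^\P_r)$, I would compare $V^+$ on small intervals with the BDSDE it dominates, using the minimality built into the value process and the uniform continuity in $\omega$ of the coefficients, the $W$--integral being recovered, conditionally on $\Fc_T^W$, from the Pardoux--Peng decomposition of the $W$--driven part. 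Finally, the measurability claims — that $\widetilde Z^\P$ and $\widetilde K^\P$ are $\overline{\Fc}^\P_t$--measurable for a.e. $t$ — would follow by passing to the appropriate $\P$--modifications (any $\Gc_{t^+}$--measurable variable admits a $\Gc_t$--, and even an $\Fc^B_t\vee\Fc^{W,o}_T$--, measurable modification) together with the stochastic--kernel/Stricker--Yor structure underlying $\Pc$.

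The hard part will be the rigorous separation of the three non--martingale contributions. Unlike in the 2BSDE case, $V^+$ is \emph{not} a $\P$--supermartingale on the nose: the backward integral $\Int_t^T g_s\,d\W_s$ does not vanish under $\E^\P[\,\cdot\,|\Gc_t]$, because its integrand still depends on the future of $B$ even though all of $W$ is $\Gc_t$--measurable. Consequently the Doob--Meyer step cannot be applied to $V^+$ directly, and I expect the crux to be showing that, conditionally on $\Fc_T^W$, the $W$--integral splits off cleanly so that what remains is a bona fide supermartingale in the $B$--filtration. This is precisely where the doubly stochastic calculus of Pardoux and Peng, rather than ordinary optional and Doob--Meyer theory, must be invoked, and where the absence of pathwise estimates emphasized throughout the paper makes the argument most delicate.
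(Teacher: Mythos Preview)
Your approach diverges from the paper's, and the ``identification'' step you flag as the hard part is indeed a genuine gap as written.

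The paper bypasses direct Doob--Meyer arguments entirely. It fixes $\P\in\Pc$ and introduces the \emph{reflected} BDSDE with lower obstacle $V^+$, namely $\widetilde{Y}^{\P}\ge V^+$ together with the Skorokhod condition $\int_0^T(\widetilde Y^\P_{s^-}-V^+_{s^-})\,d\widetilde K^\P_s=0$. Since wellposedness of RBDSDEs with a merely c\`adl\`ag obstacle is not available in the literature, the paper develops it in the Appendix via penalization and Snell envelopes. The decomposition of $V^+$ then follows from showing $\widetilde Y^\P=V^+$, argued by contradiction: if $\widetilde Y^\P_0>V^+_0$, then up to $\tau^\eps:=\inf\{t:\widetilde Y^\P_t\le V^+_t+\eps\}$ the increasing process is flat, so $\widetilde Y^\P$ solves an ordinary BDSDE on $[0,\tau^\eps]$; comparing with the BDSDE with terminal value $V^+_{\tau^\eps}$ and using that $V^+$ is a strong doubly $\widehat F$-supermartingale (which is the operative content of the dynamic programming principle) gives $\E[\widetilde Y^\P_0]\le\E[V^+_0]+C\eps$, a contradiction. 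This is the strategy of Remark~4.9 in \cite{sone:touz:zhan:13}, not of the Doob--Meyer-type Proposition you cite.

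The virtue of the reflected-equation route is that the decomposition automatically carries the correct nonlinear form $\widehat F(\cdot,\widetilde Z^\P)$ and $g(\cdot,\widetilde Z^\P)$, by the very definition of the RBDSDE. Your plan instead produces first a generic semimartingale decomposition and only afterwards tries to identify the pieces. The snag is circular: linearizing $\widehat F$ and $g$ in the $z$-variable ``along $V^+$'' already requires the very integrand $\widetilde Z^\P$ you are trying to construct; and even granting a splitting $V^+=V^+_0+M+A$, nothing in the argument forces $A$ to decompose as $-\int\widehat F_r(V^+_r,\widetilde Z^\P_r)\,dr-\int g_r(V^+_r,\widetilde Z^\P_r)\cdot d\W_r-\widetilde K^\P$ with $\widetilde K^\P$ \emph{non-decreasing}. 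The inequality $V^+_{t_1}\ge y^\P_{t_1}(t_2,V^+_{t_2})$ says only that $V^+$ dominates BDSDE solutions; it does not by itself pin down the integrand in its own dynamics. What is actually required is a nonlinear Doob--Meyer decomposition for doubly $\widehat F$-supermartingales, and the RBDSDE construction in the Appendix is precisely the device that delivers it. Your closing instinct --- that genuine doubly stochastic machinery, not ordinary optional theory, must carry the argument --- is correct; the concrete implementation is the reflected equation, not conditioning on $\Fc^W_T$ followed by a linear Doob--Meyer step.
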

\begin{proof}
We introduce first the following RBDSDE with lower obstacle $V^{+}$ under each $\P\in\Pc^{t}$,
\begin{align*}\left\lbrace
\begin{aligned}
&\widetilde{Y}^{\P}_t
=\xi+\Int_t^T\widehat{F}_s(\widetilde{Y}^{\P}_s,\widetilde{Z}^{\P}_s)ds+\Int_t^T
g_s(\widetilde{Y}^{\P}_s,\widetilde{Z}^{\P}_s)\cdot d\W_s -\Int_t^T \widetilde{Z}^{\P}_s \cdot dB_s+\widetilde{K}_T^{\P}-\widetilde{K}_t^{\P},\\
&\widetilde{Y}^{\P}_t \geq V_t^{+},\ 0\leq t\leq T, \
\P-a.s,\\
& \Int_0^T(\widetilde{Y}^{\P}_{s^-}-V^{+}_{s^-})d\widetilde{K}_{s^-}^{\P}=0 ,\ \P-a.s.
\end{aligned} 
\right.\end{align*}
To the best of our knowledge, there are no results in the literature for the existence and uniqueness of such RBDSDE with c\`adl\`ag obstacle. The proofs of these results are postponed to Section \ref{RBDSDE:section} in the Appendix for completeness. As mentioned in Remark 4.9 in \cite{sone:touz:zhan:13}, and for a fixed $\P\in\Pc^{t}$, we shall use the solution of the above RBDSDEs and the notion of $\widehat{F}-$weak doubly super--martingale whis is introduced in the Appendix to prove the desired result. This notion is a natural extension of nonlinear $f-$super--martingale introduced first by Peng \cite{Peng97} in the context of standard BSDEs. Let us now argue by contradiction and suppose that $\widetilde{Y}^{\P}$ is not equal $\P-a.s.$ to $V^{+}$. Then we can assume without
loss of generality that $\widetilde{Y}^{\P}_0> V_0^{+}, \P-a.s.$ For each $\eps>0$, define the following ${\G}-$stopping
time
$$\tau^{\eps}:= \Inf \{t\geq 0,\ \widetilde{Y}^{\P}_t \leq V_t^{+}+\eps\}.$$
Then $\widetilde{Y}^{\P}$ is strictly above the obstacle before $\tau^{\eps}$, and therefore $\widetilde{K}^{\P}$ is identically equal
to $0$ in $[0,\tau^{\eps}]$. Hence, we have for all $0\leq t\leq s\leq T$
$$\widetilde{Y}^{\P}_s = \widetilde{Y}^{\P}_{\tau^{\eps}}+\Int_s^{\tau^{\eps}}
\widehat{F}_r(\widetilde{Y}^{\P}_r,\widetilde{Z}^{\P}_r)dr
+\Int_s^{\tau^{\eps}}g_r(\widetilde{Y}^{\P}_r,\widetilde{Z}^{\P}_r)\cdot d\W_r-\Int_s^{\tau^{\eps}}\widetilde{Z}^{\P}_r\cdot dB_r,
\ \P-a.s.$$ 
Let us now define the following BDSDE on
$[0,\tau^{\eps}]$
$$ y_s^{+,\P} = V_{\tau^{\eps}}^{+} + \Int_s^{\tau^{\eps}}\widehat{F}_r(y_r^{+,\P},z_r^{+,\P})dr+\Int_s^{\tau^{\eps}}
g_r(y_r^{+,\P},z_r^{+,\P})\cdot d\W_r
-\Int_s^{\tau^{\eps}}z_r^{+,\P}\cdot dB_r , \ \P-a.s.$$ 
By comparison theorem and the standard a priori estimates, we obtain that
$$\E[\widetilde{Y}^{\P}_0] \leq \E[y_0^{+,\P}]+C\E\big[|V_{\tau^{\eps}}^{+}-\widetilde{Y}^{\P}_{\tau^{\eps}}|\big]\leq \E[y_0^{+,\P}]+ C\eps,$$
by definition of $\tau^{\eps}.$

\vspace{0.5em}
\noindent Moreover, we can show similarly to the proof of Lemma \ref{Lemmaregularity} (see also the arguments in Step 1 of the proof of Theorem 4.5 in \cite{sone:touz:zhan:13} pages 328--329) that $V^+$ is a strong $\widehat{F}-$doubly super--martingale under each $\P\in\Pc^{t}$. Thus, we obtain in particular that $y_0^{+,\P} \leq V_0^{+}$ which in
turn implies $$\E[\widetilde{Y}^{\P}_0] \leq \E[V_0^{+}]+ C\eps, $$ hence a contradiction by arbitrariness of $\eps$.
\ep
\end{proof}

\vspace{0.5em}
\noindent We next prove a representation for $V^{+}$ similar to (\ref{eqrepresentation}), which will be useful for us to justify that the value process we have constructed provides indeed a solution to the 2BDSDE \reff{eq}.
\begin{Proposition} Assume that  Assumptions \ref{ass}, \ref{ass1} hold. Then we have
 \begin{align}
V_t^{+}=\underset{\P^{'}\in\Pc(t+,\P)}{{\rm ess \, sup^{\P}}}\Yc_t^{\P^{'}}(T,\xi),\ \P-a.s.,\ \forall \P\in\Pc^t.
\label{repv} \end{align}
\end{Proposition}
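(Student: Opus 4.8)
The plan is to establish the representation \reff{repv} by proving two inequalities, mirroring the structure of the proof of Theorem \ref{theorep}, but now working with the value process $V^+$ and its semimartingale decomposition obtained in Proposition \ref{DecompositionV+}. Fix $\P\in\Pc^t$. The key structural fact I would exploit is that, by Proposition \ref{DecompositionV+}, $V^+$ satisfies $\P$-a.s. the $2$BDSDE-type dynamics
\begin{align*}
V_s^{+} = \xi+\Int_s^T\widehat{F}_r(V_r^{+},\widetilde{Z}^{\P}_r)dr+\Int_s^T g_r(V_r^{+},\widetilde{Z}^{\P}_r)\cdot d\W_r-\Int_s^T
\widetilde{Z}^{\P}_r \cdot dB_r+\widetilde{K}_T^{\P}-\widetilde{K}_s^{\P},
\end{align*}
with $\widetilde K^\P$ nondecreasing. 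This is exactly the form of equation \reff{eq} restricted to one fixed measure, so $V^+$ plays the role that $Y$ played in Theorem \ref{theorep}.

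For the first inequality, $V_t^{+}\geq \Yc_t^{\P^{'}}(T,\xi)$ for every $\P^{'}\in\Pc(t^+,\P)$, I would argue exactly as in Step (i) of Theorem \ref{theorep}. For any such $\P^{'}$, the process $V^+$ solves under $\P^{'}$ the same decomposition (since $\P^{'}$ agrees with $\P$ on $\Gc_{t^+}$ and the stochastic integrals are defined universally via the aggregation result of \cite{N12}), with a nondecreasing $\widetilde K^\P$. Dropping the increasing term and applying the comparison principle for BDSDEs of \cite{SGL05} under $\P^{'}$ gives $V_t^{+}\geq \Yc_t^{\P^{'}}(T,\xi)$, $\P^{'}$-a.s.; since the two sides are $\Fc_{t^+}$-measurable and $\P^{'}=\P$ on $\Gc_{t^+}$, the inequality transfers to hold $\P$-a.s., and then taking the essential supremum over $\P^{'}\in\Pc(t^+,\P)$ yields the $\geq$ direction.

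For the reverse inequality I would again linearize. Writing $\delta Y:=V^+-\Yc^{\P^{'}}(T,\xi)$ and $\delta Z:=\widetilde Z^\P-z^{\P^{'}}$, Assumption \ref{ass}(iii) produces bounded processes $\lambda,\eta,\gamma,\beta$ so that $\delta Y$ solves a linear BDSDE driven by $d\widetilde K^\P$. Introducing the exponential density $M$ as in \eqref{M} and applying integration by parts, the local-martingale parts vanish in expectation and one obtains a bound of the form $\E^\P[\delta Y_t]\leq \E^{\P^{'}}[\Sup_{t\leq r\leq T}(M_t^{-1}M_r)(\widetilde K_T^\P-\widetilde K_t^\P)]$; a H\"older argument together with the $L^2$-control \eqref{eq4} on $M$ reduces this to controlling $\E^{\P^{'}}[\widetilde K_T^\P-\widetilde K_t^\P]$. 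The decisive point is then the minimality of $\widetilde K^\P$: I would invoke the analogue of Lemma \ref{eq:mincond}, namely that $\Inf_{\P^{'}\in\Pc(t^+,\P)}\E^{\P^{'}}[\widetilde K_T^\P-\widetilde K_t^\P]=0$, to conclude that the infimum over $\P^{'}$ of $\E^\P[\delta Y_t]$ is nonpositive, and hence that $V_t^+\leq \underset{\P^{'}\in\Pc(t^+,\P)}{\rm ess\, sup^{\P}}\,\Yc_t^{\P^{'}}(T,\xi)$, $\P$-a.s.

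The main obstacle I anticipate is precisely justifying this minimality property for $\widetilde K^\P$. Unlike in Theorem \ref{theorep}, where $K$ was assumed by definition to satisfy the minimality condition \reff{eqmin}, here $\widetilde K^\P$ arises from the Doob--Meyer-type decomposition of Proposition \ref{DecompositionV+} and its minimality must be extracted from the dynamic programming principle (Theorem \ref{dynam prog}) satisfied by $V$. I would establish it by using the DPP to show that $V^+$ is the smallest $\widehat F$-doubly supermartingale dominating the relevant family of BDSDE solutions — equivalently, that no strictly smaller increasing process is compatible with the essential-supremum representation — so that any downward push encoded in $\widetilde K^\P$ is minimal in the sense of \reff{eqmin}. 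Once that minimality is in hand, the a priori estimate \eqref{eq5} on the second moment of the increments of $K$ (which transfers to $\widetilde K^\P$ via the estimates of Theorem \ref{thestiamte}) guarantees the integrability needed to pass to the limit, and the two inequalities combine to give \reff{repv}.
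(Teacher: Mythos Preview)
Your $\geq$ direction is fine: Proposition \ref{DecompositionV+} gives, under each $\P'\in\Pc(t^+,\P)\subset\Pc$, a decomposition of $V^+$ with a nondecreasing process, so the comparison principle for BDSDEs yields $V^+_t\geq \Yc_t^{\P'}(T,\xi)$, $\P'$-a.s., hence $\P$-a.s.

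The $\leq$ direction, however, contains a genuine circularity. You correctly identify that the missing ingredient is the minimality of $\widetilde K^\P$, and you propose to extract it from the DPP. But look at how minimality is actually established in the paper (Step 2 of the proof of Theorem \ref{Thexistence}): that argument bounds $\E^{\P^n}[\widetilde K_T-\widetilde K_t]$ by a constant times $(\E^{\P^n}[\delta Y_t])^{1/3}$ with $\delta Y_t=V^+_t-\Yc_t^{\P^n}(T,\xi)$, and then concludes using $\inf_{\P'}\E^{\P'}[\delta Y_t]=0$, which is precisely the representation \reff{repv} you are trying to prove. So in the paper's logical order, \reff{repv} comes \emph{first} and minimality is deduced from it, not the other way around. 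Your proposed route to minimality (``$V^+$ is the smallest $\widehat F$-doubly supermartingale dominating the family'') is essentially a reformulation of \reff{repv} itself and does not break the circle.

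The paper's proof avoids $\widetilde K^\P$ entirely and goes through a stability argument, following Lemma 3.5 of \cite{PTZ14}. The idea is to pass to the limit $r\downarrow t$, $r\in\Q$, directly in the dynamic programming principle. For such $r$, Theorem \ref{dynam prog} together with Lemma \ref{BDSDEshift} yields
\[
V_r=\underset{\P'\in\Pc(r^+,\P)}{\rm ess\,sup^\P}\ \Yc_r^{\P'}(T,\xi),\quad \P\text{-a.s.}
\]
The left-hand side converges to $V_t^+$ by Lemma \ref{Lemmaregularity}. On the right-hand side one uses the continuity in $t$ of BDSDE solutions (the ``stability result'' the paper refers to) and the upward-directed structure of the families $\Pc(r^+,\P)$ to identify the limit with $\underset{\P'\in\Pc(t^+,\P)}{\rm ess\,sup^\P}\,\Yc_t^{\P'}(T,\xi)$. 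No minimality of $\widetilde K^\P$ is required at this stage; rather, \reff{repv} is then used as an input to prove minimality later.
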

\begin{proof}
The proof for the representations is the same as the proof of Lemma 3.5 in \cite{PTZ14}, since a stability result holds in our context, too.
\ep
\end{proof}
\subsection{Existence result in the general case}
We are now in position to state the main result of this section.
\begin{Theorem} \label{Thexistence}
Let $\xi\in\Lc^{2}$ and assume that Assumptions \ref{ass}, \ref{ass1} hold. Then there exists a unique solution $(Y,Z,K)\in\D^{2}\times \H^{2}\times\mathbb I^2$ of the 2BDSDE \reff{eq}.
\end{Theorem}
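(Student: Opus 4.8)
The plan is to separate uniqueness from existence, and to obtain existence in two stages: first for terminal data in ${\rm UC}_b(\Omega)$ by assembling the pathwise construction of the preceding subsections, and then for arbitrary $\xi\in\Lc^2$ by a density and stability argument. Uniqueness is essentially already available: if $(Y,Z,K)$ solves \eqref{eq} with $\xi\in\Lc^2$, then Theorem \ref{theorep} forces the representation \eqref{eqrepresentation}, so that $Y$ is uniquely determined by $\xi$; the process $Z$ is then pinned down $\Pc$-q.s. through the pathwise covariation $d\langle Y,B\rangle_t=\widehat a_tZ_t\,dt$ (which is universally defined and determines $Z$ because $\widehat a_t$ is invertible), and $K$ is in turn recovered from \eqref{eq}. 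Hence only existence needs to be proved.

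For $\xi\in{\rm UC}_b(\Omega)$ I would take $Y:=V^{+}$, the right limit introduced in \eqref{defV+}, as the first component. By Lemma \ref{Lemmaregularity} it is $\Pc$-q.s. c\`adl\`ag, and by Proposition \ref{DecompositionV+} it admits, under every $\P\in\Pc$, a decomposition with a $\G^\P_+$-predictable $\widetilde Z^{\P}$ and a non-decreasing $\widetilde K^{\P}$. The crucial point is to aggregate these $\P$-dependent objects into universal processes. Since the bracket $\langle V^{+},B\rangle$ is defined pathwise, the identity $d\langle V^{+},B\rangle_t=\widehat a_t\widetilde Z^{\P}_t\,dt$ shows that $\widehat a_t\widetilde Z^{\P}_t$ does not depend on $\P$, so that $Z_t:=\widehat a_t^{-1}\,d\langle V^{+},B\rangle_t/dt$ defines a single $\G$-predictable process lying in $\H^{2}$ and coinciding with $\widetilde Z^{\P}$ for every $\P$. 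Invoking the aggregation result of Nutz \cite{N12} to obtain universal versions of $\int_0^{\cdot}Z_s\cdot dB_s$ and $\int_0^{\cdot}g_s(Y_s,Z_s)\cdot d\W_s$ (as in the Remark following Definition \ref{def:def}), I define $K$ universally by $K_t:=Y_0-Y_t-\int_0^t\widehat F_s(Y_s,Z_s)\,ds-\int_0^t g_s(Y_s,Z_s)\cdot d\W_s+\int_0^t Z_s\cdot dB_s$. By construction $K$ agrees $\P$-a.s. with $\widetilde K^{\P}$, hence is $\Pc$-q.s. non-decreasing and, by Theorem \ref{thestiamte}(i), belongs to $\I^{2}$; the triple $(Y,Z,K)$ then solves \eqref{eq} $\Pc$-q.s. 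Finally the minimality condition \eqref{eqmin} is obtained from the representation \eqref{repv}: linearising $V^{+}-\Yc^{\P'}(T,\xi)$ exactly as in step (ii) of the proof of Theorem \ref{theorep} yields $\underset{\P^{'}\in\Pc(t^+,\P)}{\rm ess \, inf^{\P}}\E_t^{\P^{'}}[K_T-K_t]=0$, which is \eqref{eqmin} since $\P^{'}=\P$ on $\Gc_{t^+}$.

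For general $\xi\in\Lc^2$, I would use that $\Lc^2$ is by definition the $\|\cdot\|_{\L^2}$-closure of ${\rm UC}_b(\Omega)$: choose $\xi^n\in{\rm UC}_b(\Omega)$ with $\|\xi^n-\xi\|_{\L^2}\to 0$ and let $(Y^n,Z^n,K^n)$ be the solutions just constructed, noting that each $\xi^n$ is bounded, hence in $L^{2+\eps}$, so every integrability hypothesis of Theorem \ref{thestiamte} is met along the sequence. The a priori estimates of Theorem \ref{thestiamte}(ii), namely $\|Y^n-Y^m\|_{\D^2}\leq C\|\xi^n-\xi^m\|_{\L^2}$ together with the companion bounds for $Z^n-Z^m$ and $K^n-K^m$ (in which the $\|\xi^i\|$-factors remain uniformly bounded along the sequence), show that $(Y^n,Z^n,K^n)$ is Cauchy in $\D^2\times\H^2\times\I^2$. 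I would then pass to the limit $(Y,Z,K)$, check that \eqref{eq} holds $\Pc$-q.s. by letting $n\to\infty$ in the universally defined stochastic integrals, and verify that the minimality condition \eqref{eqmin} survives this passage.

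The main obstacle I anticipate lies in the ${\rm UC}_b$ step, precisely at the two places where the non-dominated, doubly stochastic structure genuinely bites: the aggregation of the family $\{\widetilde Z^{\P},\widetilde K^{\P}\}_{\P\in\Pc}$ into universal processes, which forces one to work pathwise and to rely on \cite{N12}, and the derivation of \eqref{eqmin} from the representation \eqref{repv}. The backward It\^o integral $\int g\cdot d\W$ is the recurring source of difficulty, since it blocks the pathwise estimates available for ordinary 2BSDEs; consequently every manipulation (linearisation, aggregation, passage to the limit and preservation of minimality) must be carried out within the $L^2$-framework rather than pathwise.
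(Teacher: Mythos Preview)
Your proposal is correct and tracks the paper's own proof closely: uniqueness via the representation of Theorem \ref{theorep}, existence for $\xi\in{\rm UC}_b(\Omega)$ by setting $Y:=V^{+}$ and aggregating the data from Proposition \ref{DecompositionV+}, then passage to general $\xi\in\Lc^2$ through the stability estimates of Theorem \ref{thestiamte}.

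The one place where the paper is more explicit than you is the aggregation of $Z$. You write that ``the bracket $\langle V^{+},B\rangle$ is defined pathwise'', but $V^{+}$ is only a \emph{generalized} semimartingale in the Pardoux--Peng sense because of the backward It\^o term $\int g\cdot d\W$, so Karandikar's construction does not apply to it directly. The paper circumvents this by applying the generalized It\^o formula (Lemma \ref{lemma:ito}) to the product $B^{i}V^{+}$, which isolates $\int_0^t \widehat a_s^{1/2}\widetilde Z_s^{\P}\cdot{\bf 1}_i\,ds$ as the difference between $B^{i}_tV^{+}_t$ and two stochastic integrals $\int B^{i}\,dV^{+}$, $\int V^{+}\,dB$; these last two are then argued to be pathwise well-defined (the backward piece of $dV^{+}$ becoming a forward integral after time reversal). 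This is precisely your covariation identity unpacked, so the arguments are equivalent in spirit; the paper's detour is simply what is needed to justify the pathwise step you invoke. Your identification of this aggregation and the derivation of the minimality condition as the two delicate points is exactly right.
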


\begin{proof}
The proof is divided in three steps. In the first one we prove that the value process $V^+$ defined by \eqref{defV+} is the solution of our $2$BDSDE in the case when $\xi$ belongs in ${\rm UC}_b (\Omega)$ and show the aggregation result for the solution. Then, in the second step we verify the minimality condition for the increasing process. Finally, we deal with the general case.

\vspace{0.3em}
{\it{Step 1: Existence and aggregation results for $\xi$ belongs  in  ${\rm UC}_b (\Omega)$.  }} As we have mentioned above, the natural candidate for the $Y$ solution for our $2$BDSDE is given by 
$$Y_t= V_t^+:=\underset{r\in\Q\cap(t,T],r\downarrow t}{{\Lim}}V_r,$$
 where $V$ is the value process defined by \eqref{definitionV}. First, we know that $V^+$ is a c\`adl\`ag process defined path--wise and using the same notations in Proposition \ref{DecompositionV+} our solution $Y$ verifies
$$V_t^+=V_0^+-\Int_0^t\widehat F_s(V_s^+,\widetilde{Z}_s^\P)ds-\Int_0^t g_s(V_s^+,\widetilde{Z}_s^\P)\cdot d\W_s+\Int_0^t\widetilde{Z}_s^{\P} \cdot dB_s-\widetilde K_t^\mathbb P, \text{ }\mathbb P-a.s., \text{ }\forall \mathbb P\in\mathcal P^t.$$
We note that $V^+$ is ($\P-a.s.$) a c\`adl\`ag generalized semi--martingale under any $\P\in\Pc$, (studied by Pardoux and Protter in \cite{PP87} and Pardoux and Peng \cite{pp1994}). By the generalized It\^o's formula of Lemma \ref{proof of lemmaitô}, we have for any $i=1,\dots,d$
\begin{align*}
B_t^iV_t^+=&\ \int_0^t\left(\widehat a_s^{1/2}\widetilde{Z}_s^\P\cdot{\bf 1}_i-\widehat F_s(V_s^+,\widetilde{Z}_s^\P)B^i_s\right)ds+\int_0^t\left(\widetilde{Z}_s^{\P}B^i_s+{\bf 1}_dV_s^+\right)\cdot dB_s\\
&-\int_0^tg_s(V_s^+,\widetilde{Z}_s^\P)B^i_s\cdot d\W_s-\int_0^tB^i_sd\widetilde K_s^\P\\
=&\ \int_0^t\widehat a_s^{1/2}\widetilde{Z}_s^\P\cdot{\bf 1}_ids+\int_0^tB^i_sdV_s^++\int_0^t{\bf 1}_dV_s^+\cdot dB_s.
\end{align*}
Then, we can adapt Karandikar's results obtained for c\`adl\`ag semi--martingale in our context to define universally the two stochastic integrals
$$\int_0^tB^i_sdV_s^+, \text{ and }\int_0^t{\bf 1}_dV_s^+\cdot dB_s.$$
Indeed, $B$ and $V^+$ are both c\`adl\`ag and a backward It\^o integral can always be considered as a forward It\^o integral, provided that time is reversed.

\vspace{0.5em}
\noindent This gives us the existence of a $\G-$predictable process $Z$ such that for any $\P\in\Pc$
$$\widetilde Z^\P_t=Z_t,\ dt\otimes\P-a.e.$$
Furthermore, since for any $\P\in\Pc$, $\widetilde Z_t$ is $\overline\Fc_t^\P-$mesurable for a.e. $t\in[0,T]$, we deduce immediately that $Z_t$ is $\bigcap_{\P\in\Pc}\overline\F_t^\P-$mesurable for a.e. $t\in[0,T]$.
 
\vspace{0.5em} 
\noindent Concerning the fact that we can aggregate the family $(\widetilde K^\mathbb P)_{\mathbb P\in\mathcal P}$, it can be deduced as follows.
We have from \eqref{defV+} that $V^+$ is defined path--wise, and so is the Lebesgue integral $\int_0^t\widehat F_s(V_s^+,Z_s)ds$. By \cite{N12}, the stochastic integrals $\Int_0^tZ_s\cdot dB_s$ and $\Int_0^t g_s(V_s^+,Z_s)\cdot d\W_s$ can also be defined path--wise. We can therefore define path--wise
$$ K_t:=V_0^+-V_t^+-\Int_0^t\widehat F_s(V_s^+, Z_s)ds- \Int_0^t g_s(V_s^+,Z_s)\cdot d\W_s+\Int_0^tZ_s\cdot dB_s,$$
and $K$ is an aggregator for the family $(\widetilde K^\mathbb P)_{\mathbb P\in\mathcal P^t}$. Thus, the triplet $(Y,Z, K)$ satisfies the equation (\ref{eq}) and from the {\it a priori} estimates in Theorem \ref{theorep} we get that $(Y, Z, K)$ belongs to $\D^{2}\times \H^{2}\times \I^{2}$.

\vspace{0.3em}
{\it{Step 2: The minimality condition of $\widetilde K$}.}
Now, we have to check that the minimum condition (\ref{eqmin}) holds. We follow the arguments in the proof of Theorem \ref{theorep}.
For $t\in[0,T],\ \P\in\Pc^t$ and $\P^{\prime}\in\Pc(t+,\P)$, we
denote $\delta Y := V^{+}-y^{\P^{'}}(T,\xi)$ and $\delta Z :=
Z-z^{\P^{'}}(T,\xi)$ and we introduce the process $M$ of
(\ref{M}). We first observe that since ${K}$ is non-decreasing, we have$$\underset{\P^{\prime}\in\Pc(t^+,\P)}{\rm ess \, inf^{\P}}\E_t^{\P^{'}}[{K}_T-{K}_t]\geq 0.$$ 
Then, it suffices to prove that $\E^\P\Big[\underset{\P^{\prime}\in\Pc(t^+,\P)}{\rm ess \, inf^{\P}}\E_t^{\P^{'}}[{K}_T-{K}_t]\Big]\leq 0.$ We know that the family $\Pc(t^+,\P)$ is upward directed. Therefore, by classical results, there is a sequence $(\P^n)_{n\geq 0}\subset \Pc(t^+,\P)$ such that
\begin{align}
\E^\P\left[\underset{\P^{\prime}\in\Pc(t^+,\P)}{\rm ess \, inf^{\P}}\E_t^{\P^{'}}[{K}_T-{K}_t]\right]=\underset{n\rightarrow+\infty}{\lim}\downarrow\E^{\P^{n}}\left[{K}_T-{K}_t\right].\label{essinf}
\end{align}
On the other hand, by (\ref{eq4}), we estimate by the H\"{o}lder inequality that
\begin{align*}
\E^{\P^{n}}\left[{K}_T-{K}_t\right]
&= \E^{\P^{n}}\left[\left(\underset{t\leq s\leq T}{\Inf}(M_{t}^{-1}M_s)\right)^{1/3}\left({K}_T-{K}_t\right)^{1/3}
\left(\underset{t\leq s\leq T}{\Inf}(M_{t}^{-1}M_s)\right)^{-1/3}\left({K}_T-{K}_t\right)^{2/3}\right]\\
&\leq C \left(\E^{\P^{n}}\left[\left({K}_T^{\P^{n}}\right)^2\right]\E^{\P^{n}}\left[\left(\underset{t\leq s\leq T}{\Inf}(M_{t}^{-1}M_s)\right)\left({K}_T-{K}_t\right)\right]\right)^{1/3}\\
&\leq C \left(\E^{\P^{n}}\left[\left({K}_T\right)^2\right] \E^{\P^{n}}\left[M_{t}^{-1}\Int_t^T M_s d{K}_s\right]\right)^{1/3}\\
&\leq C \left(\E^{\P^{n}}\left[\left({K}_T\right)^2\right]\right)^{1/3}         \left(\E^{\P^{n}}[\delta Y_t]\right)^{1/3},
\end{align*}
where we have used in the last inequality the fact that ${K}$ is non-decreasing and the same arguments as in the proof of Theorem \ref{theorep} (ii). 

\vspace{0.5em}
\noindent Plugging the above in (\ref{essinf}), we obtain
\begin{align*}
\E^\P\left[\underset{\P^{\prime}\in\Pc(t^+,\P)}{\rm ess \, inf^{\P}}\E_t^{\P^{'}}[{K}_T-{K}_t]\right] &\leq  C \underset{n\rightarrow+\infty}{\lim}\downarrow\big(\E^{\P^{n}}\left[\delta Y_t\right]\big)^{1/3}\leq  C \left(\underset{\P^{n}\in\Pc(t^+,\P)}{\rm ess \, inf^{\P}}\E^{\P^{n}}\left[\delta Y_t\right]\right)^{1/3}=0.
\end{align*}
which is the desired result.

\vspace{0.3em}
{\it{Step 3: Existence and aggregation results for $\xi$ belonging to $ \Lc^{2}$. } } For 
$\xi\in\Lc^{2}$, there exists by definition a sequence
$(\xi_n)_{n\geq 0}\subset{UC}_b(\Omega)$ such that 
\begin{align*}
\underset{n\rightarrow
+\infty}{\Lim}\|\xi_n-\xi_m\|_{\L^{2}}=0\quad\text{and}\quad
\underset{n\geq 0}{\Sup}~\|\xi_n\|_{\L^{2}}< +\infty. 
\end{align*} 
Let $(Y^n,Z^n,K^n)\in \D^{2}\times \H^{2}\times\mathbb I^2$ be the solution to $2$BDSDE
(\ref{eq}) with terminal condition $\xi_n$. By the estimates of Theorem
\ref{thestiamte}, we have 
\begin{align*} &
\|Y^n-Y^m\|_{\D^{2}}^2 + \|Z^n-Z^m \|^2_{\H^{2}} +
\underset{\P\in\Pc}{\Sup}\, \E^{\P}\left[\underset{0\leq t\leq T}{\Sup}
|\widetilde K_t^{n}-\widetilde K_t^{m}|^2\right] \\
&\leq C \|\xi_n-\xi_m\|_{\L^{2}}^2 +
\widetilde C\|\xi_n-\xi_m\|_{\L^{2}}\underset{n,m \rightarrow +\infty}{\longrightarrow} 0. \end{align*} 
Extracting a  fast-converging subsequence and using Borel-Cantelli Lemma, we can make sure that, $\P-a.s.$
$$\underset{n\rightarrow\infty}{\Lim} \left[\underset{0\leq t\leq
T}{\Sup}\big[|Y^n_t-Y^m_t|^2+| \widetilde K_t^{n}-\widetilde K_t^{m}|^2\big] +
\Int_0^T\|\widehat a_t^{1/2}(Z^n_t-Z^m_t) \|^2dt \right] =0.$$ 
Define then \begin{align*}
Y:=\underset{n\rightarrow\infty}{\overline{\Lim}} Y^n ,\quad
Z:=\underset{n\rightarrow\infty}{\overline{\Lim}} Z^n , \quad
 K:=\underset{n\rightarrow\infty}{\overline{\Lim}} \widetilde K^{n} ,\end{align*}
It is
therefore clear that $(Y,Z, K)\in \D^2\times\H^2\times\I^2$.
\ep
\end{proof}

\section{Probabilistic interpretation for fully-nonlinear SPDEs}
\label{Probabilistic interpretation for Fully nonlinear SPDEs}
The aim of this section is to give a Feynman--Kac's formula for the solution of the following fully non-linear SPDEs
\begin{align}\label{spde}
\begin{cases}
du(t,x) + \hat{h}(t,x,u(t,x),Du(t,x),D^2u(t,x))dt + g(t,x,u(t,x),Du(t,x))\circ d\W_t = 0,\\
u(T,x)= \phi(x),
\end{cases}
\end{align}
where we consider the case $H_t(\omega,y,z,\gamma)= h(t,B_t(\omega),y,z,\gamma)$, with $h:[0,T]\times\R\times\R^d\times D_h\longrightarrow\R$ (with $D_h$ being a subset of $\mathbb S_d^{>0}$) is a deterministic map. Then, the corresponding conjuguate and bi--conjuguate functions are given by 
\begin{align}
F(t,x,y,z,a) &:= \underset{\gamma\in D_h}{\Sup}\left\{\Frac{1}{2} {\rm Tr}[a\gamma] - h(t,x,y,z,\gamma)\right\}~\text{for}~ a\in \S_d^{>0},\\
\hat{h}(t,x,y,z,\gamma) &:=  \underset{a\in \S_d^{>0}}{\Sup} \left\{\Frac{1}{2} {\rm Tr}[a\gamma] - F(t,x,y,z,a)\right\}~\text{for}~ \gamma\in \R^{d\times d}.
\end{align} 
Notice that $-\infty< \hat{h}\leq h$ and $\hat{h}$ is nondecreasing convex in $\gamma$. Also, $\hat{h}=h$ if and only if $h$ is convex and nondecreasing in $\gamma$, which we will therefore always assume.

\vspace{0.3em}
\noindent For this end, the following markovian $2$BDSDE is considered 
\begin{align}
Y_s^{t,x} =&\ \phi(B_T^{t,x})-\Int_s^T F(s,B_r^{t,x},Y_r^{t,x},Z_r^{t,x},\widehat{a}_r)dr+\Int_t^T g(r,B_r^{t,x},Y_r^{t,x},Z_r^{t,x})\circ d\W_r
 \nonumber\\
 &- \Int_s^T Z_r^{t,x} dB_r^{t,x}+K_T^{t,x}-K_s^{t,x}, ~ t\leq s\leq
 T,~\Pc^t-q.s,
\label{eqmarkovian} \end{align}
where for any $(t,x)\in[0,T]\times\R^d$, $(B_s^t)_{s\in[t,T]}$ is the shifted canonical process on $\Omega^{B,t}$ defined by
$$ B_s^{t,x}:= x+B_s^t \quad \text{for all}~ s\in[t,T].$$
The stochastic integral with respect to $\W$ is the Stratonovich backward integral (see Kunita \cite{K90} page 194). Using the definition of the Stratonovich backward integral, we can show easily that equation (\ref{eqmarkovian}) is equivalent to the following $2$BDSDE
\begin{align}
Y_s^{t,x} =&\ \phi(B_T^{t,x})-\Int_s^Tf(s,B_r^{t,x},Y_r^{t,x},Z_r^{t,x},\widehat{a}_r)dr+\Int_t^T g(r,B_r^{t,x},Y_r^{t,x},Z_r^{t,x}) \cdot d\W_r
 \nonumber\\
 &- \Int_s^T Z_r^{t,x}dB_r^{t,x}+K_T^{t,x}-K_s^{t,x}, ~ t\leq s\leq
 T,~\Pc^{t}-q.s
\label{eqmarkovianmod} \end{align} 
where $$f(s,x,y,z,\widehat{a}_s):= F(s,x,y,z,\widehat{a}_s)+\Frac{1}{2}{\rm Tr}[g(s,x,y,z)D_yg(s,x,y,z)^\top].$$

\vspace{0.5em}
\noindent From now on, we focus our study on providing the probabilistic representation of the classical and stochastic viscosity solutions for the fully nonlinear SPDEs \eqref{spde} via $2$BDSDEs \eqref{eqmarkovian}.
Let us first define the following functional spaces:
\begin{itemize}
\item $\Mc_{0,T}^W$ denotes all the $\F^W-$stopping times $\tau$ such that $0\leq \tau\leq T$.
\item  $L^{p}(\Fc_{\tau,T}^W;\R^d)$, for $p\geq 0$, denotes the space of all $\R^d-$valued $\Fc_{\tau,T}^W-$ measurable r.v. $\xi$ such that $\E[|\xi|^p]< +\infty$.
\item  $C^{\ell,k}([0,T]\times\R^d)$, for $k,\ell\geq 0$, denotes the space of all $\R-$valued functions defined on $[0,T]\times\R^d$, which are $\ell-$times continuously differentiable in $t$, $k-$times continuously differentiable in $x$.
\item  $C_b^{k,m,n}([0,T]\times\R^d\times\R; \R^p)$, for $k, m, n\geq 0$, $p\geq 1$, denotes the space of all $\R^p-$valued functions defined on $[0,T]\times\R^d\times\R$, which are $k-$times continuously differentiable in $t$, $m-$times continuously differentiable in $x$, $n-$times continuously differentiable in $y$ and have uniformly bounded partial derivatives.
\item  $C^{\ell,k}(\Fc_{t,T}^W,[0,T]\times\R^d)$, for $k,\ell\geq 0$, denotes the space of all $C^{\ell,k}([0,T]\times\R^d)-$valued random variables $\varphi$ that are $\Fc_{t,T}^W\otimes\Bc([0,T]\times\R^d)-$measurable.
\item  $C^{\ell,k}(\F^W,[0,T]\times\R^d)$, for $k,\ell\geq 0$, denotes the space of r.v. $\varphi\in C^{\ell,k}(\Fc_{t,T}^W,[0,T]\times\R^d)$ such that for fixed $x\in\R^d$, the mapping $(t,\omega)\longmapsto\varphi(t,x,\omega)$ is $\F^W-$progressively measurable.
\end{itemize}
Furthermore, for $(t,x,y)\in[0,T]\times\R^d\times\R$, we denote $\partial/\partial y= D_y, \partial/\partial t= D_t, D= D_x= (\partial/\partial x_1,\cdots,\partial/\partial x_d),$ and $D^2=D_{xx}= (\partial^2_{x_{i}x_{j}})_{i,j=1}^{d}$. The meaning of $D_{xy}, D_{yy}$,$\dots$, should be clear.

\vspace{0.5em}
\noindent Then, we list the assumptions needed in this section. The following is a slight strengthening of Assumption \ref{ass}, where we assume a bit more regularity.
\begin{Assumption}
\item[\rm{(i)}] $\Pc$ is not empty, the domain $D_{F_t(y,z)}= D_{F_t}$is independent of $(w,y,z)$. Moreover, $F$, $g$ and $D_yg$ are uniformly continuous in $t$, uniformly in $a$ on $D_{F_t}$.
\item[\rm{(ii)}] There exist constants $C > 0, 0\leq\alpha <1$ such that for all $(t,a,x,x^{\prime},z,z^{\prime},y,y^{\prime})t\in [0,T]\times D_{F_t}\times(\R^d)^4\times\R^2$
\begin{align*}
 |F(t,x,y,z,a)- F(t,x^{\prime},y^{\prime},z^{\prime},a)|&\leq  C\big(|x-x^{\prime}|+|y-y^{\prime}|+\|a^{1/2}(z-z^{\prime})\|\big),\\
\|g(t,x,y,z)-g(t,x^{\prime},y^{\prime},z^{\prime})\|^2&\leq  C\left(|x-x^{\prime}|^2+ |y-y^{\prime}|^2\right)+\alpha\|(z-z^{\prime})\|^2.
\end{align*}
\item[\rm{(iii)}] The function $g$ belongs to $C_b^{0,2,3}([0,T]\times\R^d\times\R; \R^l)$.
\item[\rm{(iv)}] There exists a constant $\lambda\in[0,1[$ such that
$$(1-\lambda)\widehat{a}_t \geq \alpha I_d,\ dt\times \Pc-q.e. $$

\label{assspde}
\end{Assumption}

\noindent We next state a strengthened version of Assumption \ref{ass1} in the present Markov framework.
\begin{Assumption}\label{assspde1} 
\begin{itemize}
\item[\rm{(i)}] The function $\phi$ is a uniformly continuous and bounded function on $\R^d$.
\item[\rm{(ii)}] For any $(t,x)\in[0,T]\times\R^d$ and for some $\eps>0$
\begin{align*}
& \underset{\P\in\Pc^{t}}{\Sup}\E^{\P}\left[|\phi(B_T^{t,x})|^{2+\eps}+ \Int_t^T|F(s,B_s^{t,x},0,0,\widehat{a}_s^t)|^{2+\eps}ds+ \Int_t^T \|g(s,B_s^{t,x},0,0)\|^{2+\eps}ds\right] <+\infty.
\end{align*}
\end{itemize}
\end{Assumption}
\vspace{0.2cm}
\noindent Therefore  under Assumptions \ref{assumtion g}, \ref{assspde} and \ref{assspde1} and according to Theorem \ref{Thexistence}, there exists a unique triplet 
$ (Y^{t,x},Z^{t,x},K^{t,x}) $ solution of the $2$BDSDE \eqref{eqmarkovian}. Indeed, it is immediate to check that $f$ and $g$ indeed satisfy Assumption \ref{ass} (recall that $D_yg$ is bounded) and that the terminal condition also verifies all the required regularity and integrability properties.
\subsection{Classical solution of SPDEs}
We can rewrite the SPDE (\ref{spde}) in its so-called integral form, as soon as $\{u(t,x), \ 0\leq t\leq T, x\in\R^d\}\in C^{0,2}(\Fc_{t,T}^W,[0,T]\times\R^d)$ is a classical solution of the following equation  where the stochastic integral is written in the Stratonovich form, namely,
\begin{align}
u(t,x)= \phi(x)+\Int_t^T \hat{h}(t,x,u(t,x),Du(t,x),D^2u(t,x))dt+\Int_t^T g(t,x,u(t,x),Du(t,x))\circ d\W_t.
\label{spdefint}
\end{align}
\begin{Definition}
We define a classical solution of the SPDE \reff{spde} as a $\R-$valued random field $\{u(t,x), \ (t,x)\in[0,T]\times\R^d\}$ such that $u(t,x)$ is $\Fc_{t,T}^W-$measurable for each $(t,x)$, and whose trajectories belong to $C^{0,2}([0,T]\times\R^d)$.
\end{Definition}
\noindent The following is a version of the celebrated Feynman--Kac formula in the present context.
\begin{Theorem}
\label{representationsolutionclassique:theorem}
Let Assumption \ref{assspde} hold true. Suppose further that $H$ is continuous in its domain, $D_F$ is independent of $t$ and is bounded both from above and away from $0$. Let 
$\{u(t,x),\  (t,x)\in[0,T]\times\R^d\}$ be a classical solution of \reff{spde} with $\{(u,Du)(s,B_s^{t,x}), ~s\in[t,T]\} \in\D^{2}\times \H^{2}$. Then
$$Y_s^{t,x}:= u(s,B_s^{t,x}), ~ Z_s^{t,x}:= Du(s,B_s^{t,x}), ~ K_s^{t,x}:= \Int_0^s k_rdr,$$
with
$$k_s:= \hat{h}(s,B_s,Y_s,Z_s,\Gamma_s)-\Frac{1}{2} {\rm Tr}[\widehat{a}_s\Gamma_s] + F(s,B_s,Y_s,Z_s,\widehat{a}_s)~ \text{and}~ \Gamma_s:=D^2u(s,B_s^{t,x}),$$
is the unique solution of the $2$BDSDE \reff{eqmarkovian}. Moreover, $u(t,x)=Y_t^{t,x}$ for all $t\in[0,T]$.
\end{Theorem}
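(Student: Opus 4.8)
The overall strategy is a verification argument: I would show that the triple built from the classical solution $u$ solves the Markovian $2$BDSDE \eqref{eqmarkovian}, and then conclude by the uniqueness part of Theorem \ref{Thexistence}. Fix $(t,x)\in[0,T]\times\R^d$ and a measure $\P\in\Pc^t$. The first step is to apply a generalized It\^o--Kunita formula for the composition of a backward random field with a forward semimartingale (in the spirit of Pardoux and Peng \cite{pp1994}) to $s\mapsto u(s,B_s^{t,x})$ on $[t,T]$. Since $u$ is $C^{0,2}$ in its arguments and satisfies the SPDE in its integral Stratonovich form \eqref{spdefint}, the temporal evolution of $u$ at a frozen spatial point contributes $-\widehat h(s,B_s,u,Du,D^2u)\,ds-g(s,B_s,u,Du)\circ d\W_s$, while the composition with the continuous semimartingale $B^{t,x}$ adds the transport term $Du\cdot dB_s^{t,x}$ and the second-order It\^o correction $\frac12{\rm Tr}[\widehat a_s D^2u]\,ds$; no $B$--$W$ cross-variation appears because $\langle B,W\rangle=0$ under every $\P\in\Pc$. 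Converting the backward Stratonovich integral into its It\^o form produces precisely the term $\frac12{\rm Tr}[gD_yg^\top]$ relating $f$ to $F$. Setting $Y_s^{t,x}=u(s,B_s^{t,x})$, $Z_s^{t,x}=Du(s,B_s^{t,x})$ and $\Gamma_s=D^2u(s,B_s^{t,x})$, and adding and subtracting $F(s,B_s,Y_s,Z_s,\widehat a_s)$, the resulting decomposition is exactly \eqref{eqmarkovian} (equivalently \eqref{eqmarkovianmod}) with the finite-variation part $K_T-K_s=\int_s^T k_r\,dr$ for the stated integrand $k_s$.

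The second step is to verify that $K$ is nondecreasing, which is where the bi-conjugate structure is used. Because $\widehat h(s,\cdot,\Gamma_s)=\sup_{a\in\S_d^{>0}}\{\frac12{\rm Tr}[a\Gamma_s]-F(s,\cdot,a)\}$, taking the admissible competitor $a=\widehat a_s$ gives the Fenchel inequality $\widehat h(s,B_s,Y_s,Z_s,\Gamma_s)\geq\frac12{\rm Tr}[\widehat a_s\Gamma_s]-F(s,B_s,Y_s,Z_s,\widehat a_s)$, that is $k_s\geq 0$, $dt\times d\P$-a.e.\ for every $\P\in\Pc$. Hence $K$ has nondecreasing paths, and its membership in $\I^2$, as well as $Y\in\D^2$ and $Z\in\H^2$, follows from the standing hypothesis $(u,Du)(\cdot,B^{t,x})\in\D^2\times\H^2$, Assumption \ref{assspde1}, and the a priori control \eqref{eq:k} on $K_T$.

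The crucial and most delicate step, which I expect to be the main obstacle, is the minimality condition \eqref{eqmin}. By Lemma \ref{eq:mincond} it is enough to prove $\inf_{\P'\in\Pc(t^+,\P)}\E^{\P'}[K_T-K_t]=0$, i.e.\ $\inf_{\P'}\E^{\P'}[\int_t^T k_r\,dr]=0$. The integrand $k_r$ vanishes exactly when $\widehat a_r$ realizes the supremum defining $\widehat h(s,\cdot,\Gamma_r)$, so for any $\eps>0$ the aim is to construct a measure $\P^\eps\in\Pc(t^+,\P)$ whose volatility $\widehat a_r$ tracks a measurable selection $a^*(\Gamma_r)$ of the maximizers of $a\mapsto\frac12{\rm Tr}[a\Gamma_r]-F(s,\cdot,a)$, up to an error controlled by $\eps$. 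The hypotheses that $H$ is continuous on its domain and that $D_F$ is independent of $t$ and bounded both from above and away from $0$ ensure that these maximizers lie in a fixed compact subset of $\S_d^{>0}$; this both makes $\P^\eps$ admissible (the bounds $\underline a_\P\leq\widehat a\leq\overline a_\P$ and the integrability requirements of Definition \ref{defP} hold) and yields $\E^{\P^\eps}[\int_t^T k_r\,dr]\leq C\eps$. The technical heart is therefore the measurable selection of $a^*$ and the verification that the induced shifted measure genuinely belongs to $\Pc(t^+,\P)$ in the doubly-stochastic framework.

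Granting these three steps, $(Y^{t,x},Z^{t,x},K^{t,x})$ is a solution of \eqref{eqmarkovian} in $\D^2\times\H^2\times\I^2$, so by the uniqueness asserted in Theorem \ref{Thexistence} it is \emph{the} solution. Finally, evaluating at $s=t$ and using that the shifted canonical process satisfies $B_t^{t,x}=x$, one gets $Y_t^{t,x}=u(t,B_t^{t,x})=u(t,x)$, which is the announced Feynman--Kac identity.
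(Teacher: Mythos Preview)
Your proposal is correct and follows essentially the same route as the paper: derive the dynamics of $u(s,B_s^{t,x})$ via an It\^o--Kunita type expansion, convert the Stratonovich integral to It\^o, identify $k_s\geq 0$ through the Fenchel inequality for $\widehat h$, and then verify the minimality condition by building measures whose volatility $\widehat a$ approximately realizes the supremum in $\widehat h$. The only cosmetic difference is that the paper carries out the first step by an explicit discretization (a telescoping sum over a partition, applying the forward It\^o formula in space and the SPDE in time on each subinterval, then passing to the limit) rather than invoking an It\^o--Kunita formula as a black box; for the minimality condition the paper simply cites the construction in \cite[Theorem~5.3]{STZ10} and notes that it goes through $\omega^W$-wise via Stricker--Yor, which is exactly the mechanism you outline.
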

\begin{proof}
It suffices to show that $(Y,Z,K)$ solves the $2$BDSDE (\ref{eqmarkovian}). Let $s=t_0< t_1<t_2<...<t_n=T$, then writing $B$ instead of $B^{t,x}$ and $B_i$ instead of $B_{t_i}$ for notational simplicity, we have
\begin{align*}
&\sum_{i=0}^{n-1}[u(t_i,B_i)-u(t_{i+1},B_{i+1})]
= \sum_{i=0}^{n-1}[u(t_i,B_{i})-u(t_{i},B_{{i+1}})]+\sum_{i=0}^{n-1}[u(t_i,B_{{i+1}})-u(t_{i+1},B_{{i+1}})]\\
&= -\sum_{i=0}^{n-1}\Int_{t_i}^{t_{i+1}} Du(t_i,B_r)dB_r-\sum_{i=0}^{n-1}\Int_{t_i}^{t_{i+1}}\Frac{1}{2} {\rm Tr}[\widehat{a}_rD^2u(t_i,B_r)]dr\\
&\hspace{0.9em}+ \sum_{i=0}^{n-1}\Int_{t_i}^{t_{i+1}} \hat{h}(r,B_{{i+1}},u(r,B_{{i+1}}^{t,x}),Du(r,B_{{i+1}}),D^2u(r,B_{{i+1}}))dr\\
&\hspace{0.9em}+\sum_{i=0}^{n-1}\Int_{t_i}^{t_{i+1}}g(r,B_{{i+1}},u(r,B_{{i+1}}),Du(r,B_{{i+1}}))\circ d\W_s,
\end{align*}
where we have used the It\^o formula and the Equation $(\ref{spdefint})$ satisfied by $u$. Now, the transformation from Stratonovich to It\^o integral yields
\begin{align*}
&\sum_{i=0}^{n-1}[u(t_i,B_{i})-u(t_{i+1},B_{{i+1}})]= -\sum_{i=0}^{n-1}\Int_{t_i}^{t_{i+1}} Du(t_i,B_r)dB_r-\sum_{i=0}^{n-1}\Int_{t_i}^{t_{i+1}}\Frac{1}{2} {\rm Tr}[\widehat{a}_rD^2u(t_i,B_r)]dr\\
&\hspace{0.9em}+ \sum_{i=0}^{n-1}\Int_{t_i}^{t_{i+1}}\hat{h}(r,B_{{i+1}},u(r,B_{{i+1}}),Du(r,B_{{i+1}}),D^2u(r,B_{{i+1}}))dr\\
&\hspace{0.9em} + \sum_{i=0}^{n-1}\Int_{t_i}^{{i+1}}g(r,B_{{i+1}}),u(r,B_{{i+1}}),Du(r,B_{{i+1}})) d\W_r+ \sum_{i=0}^{n-1}\Int_{t_i}^{t_{i+1}} F(r,B_{{i+1}},u(r,B_{{i+1}}),Du(r,B_{{i+1}}),\widehat{a}_r)dr\\
&\hspace{0.9em}- \sum_{i=0}^{n-1}\Int_{t_i}^{t_{i+1}} F(r,B_{{i+1}},u(r,B_{{i+1}}),Du(r,B_{{i+1}}),\widehat{a}_r)dr\\
&\hspace{0.9em}-  \Frac{1}{2}\sum_{i=0}^{n-1}\Int_{t_i}^{t_{i+1}}{\rm Tr}[g(r,B_{{i+1}},u(r,B_{{i+1}}),Du(r,B_{{i+1}}))Dg(r,B_{{i+1}},u(r,B_{{i+1}})),Du(r,B_{{i+1}})]dr.
\end{align*}
It then suffices to let the mesh size go to zero to obtain that the processes $(Y,Z,K)$ we have defined do satisfy Equation \reff{eqmarkovian}. It now remains to prove the minimum condition
\begin{align}
\underset{\P^{'}\in\Pc(t+,\P)}{\rm ess \, inf^{\P}}\E_t^{\P^{'}}\left[\Int_t^T k_sds\right]=0 ~ \text{for all}~ t\in[0,T],~ \P\in\Pc,
\label{mincond}
\end{align}
by which we can conclude that $(Y,Z,K)$ is a solution of the $2$BDSDE (\ref{eqmarkovian}), provided that (\ref{mincond}) holds. However, the proof of that (technical) point can actually be carried out exactly as in \cite[Theorem 5.3]{STZ10} or \cite[Theorem 5.3]{kazi2015second}. Indeed, the main point is that one has to be able to construct appropriate strong solutions to some SDEs on $\Omega^B$, similar to the ones in Example 4.5 of \cite{sone:touz:zhan:11a}. In our framework, this construction can be carried about for every fixed $\omega^W$, and it then suffices to use once more the results of Stricker and Yor \cite{SY78}.
\ep
\end{proof}
\subsection{Stochastic viscosity solution for SPDE}
\label{Stochastic viscosity solution for SPDE:section}
The aim of this section is to give a probabilistic representation for the stochastic viscosity solutions of the following fully non-linear SPDEs via solutions of  2BDSDEs \eqref{eqmarkovian}. We restrict our study  to the following class of SPDEs where the coefficient $g$ does not depends on the gradient of the solution, 
\begin{align}\label{spde:viscosité}
\begin{cases}
du(t,x) + \hat{h}(t,x,u(t,x),Du(t,x),D^2u(t,x))dt + g(t,x,u(t,x))\circ d\W_t = 0,\\
u(T,x)= \phi(x),
\end{cases}
\end{align}
As mentioned in the introduction, Lions and Souganidis have introduced a notion of stochastic viscosity solution for fully nonlinear SPDEs  in   \cite{lion:soug:98, lion:soug:00, lion:soug:01} motivated by applications in path--wise stochastic control problems and the associated stochastic HJB equations.  Buckdahn and Ma \cite{buck:ma:10a,buck:ma:10b} have introduced the rigorous notion of stochastic viscosity solution for semi--linear SPDEs and have then given the probabilistic interpretation of such equation via BDSDEs, where the intensity of the noise $g$ in the SPDEs \eqref{spde:viscosité} does not depend on the gradient of the solution. As mentioned in the introduction, they used the so--called Doss--Sussmann transformation and stochastic diffeomorphism flow technics to convert the semi--linear SPDEs to PDEs with random coefficients. This transformation permits to remove the stochastic integral term from the SPDEs and then gives a rigorous definition of so--called stochastic viscosity solution for SPDEs. We have to mention that it is difficult to define viscosity solution for SPDEs due to the fact that there are no maximum principle for solutions of SPDEs, because of the presence of the stochastic integral term in the equation. Since we are following a similar approach, this explains why we also assume that the non--linearity $g$ is not impacted by the gradient term. 

\vspace{0.5em}
\noindent We will use the shifted probability spaces defined in Section \ref{A direct existence argument}. We now introduce the random function $u:[0,T]\times \Omega^W\times \R^d\longrightarrow \R$ given by
\begin{align}
u(t,x):=Y_t^{t,x}= \underset{\P\in\Pc^{t}}{\Sup}\ y_t^{\P,t,x}, ~\text{for}~ (t,x)\in[0,T]\times \R^d,
\label{probarep}
\end{align}
where for any $(t,x,\P)\in[0,T]\times\R^d\times\Pc^t$, $(y^{\P,t,x},z^{\P,t,x})$ is the unique solution of the BDSDE
\begin{align*}
y_s^{\P,t,x} =&\ \phi(B_T^{t,x})-\Int_s^T f(r,B_r^{t,x},y_r^{\P,t,x},z_r^{\P,t,x},\widehat{a}_r)dr+\Int_t^T g(r,B_r^{t,x},y_r^{\P,t,x},z_r^{\P,t,x}) \cdot d\W_r
 \nonumber\\
 &- \Int_s^T z_r^{\P,t,x}\cdot dB_r^{t,x}, ~ t\leq s\leq
 T,~\P-a.s.
\end{align*}
By the Blumenthal $0-1$ law, it follows that $u(t,x)$ is deterministic with respect to $B$, but still an $\F^W-$adapted process.

\begin{Theorem}
Let Assumptions \ref{assumtion g}, \ref{assspde} and \ref{assspde1} hold true. Then $u$ belongs to $C(\Fc_{t,T}^W,[0,T]\times\R^d)$.
\end{Theorem}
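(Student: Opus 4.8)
The plan is to establish two properties separately: the measurability of $u$, namely that $u(t,x)$ is $\Fc_{t,T}^W-$measurable for each $(t,x)$ and that the random field is jointly measurable in $(t,x,\omega^W)$, and the joint continuity of its trajectories $(t,x)\longmapsto u(t,x)$, $\P_0^W-$a.s. The measurability is almost immediate: by the Blumenthal $0-1$ law recalled just before the statement, each $u(t,x)$ is deterministic with respect to $\omega^B$ and $\F^W-$adapted, while the joint measurability follows by specializing Lemma \ref{mes2} to the present Markovian value field. Consequently the heart of the proof is the regularity estimate in $(t,x)$.

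For the dependence on $x$, I would fix $t$ and compare the two BDSDEs defining $y^{\P,t,x}$ and $y^{\P,t,x'}$. The crucial observation is that $B_T^{t,x}-B_T^{t,x'}=x-x'$ and $B_r^{t,x}-B_r^{t,x'}=x-x'$ hold deterministically, so that by the uniform continuity of $\phi$ and the Lipschitz continuity of $F$ and $g$ in the space variable (Assumptions \ref{assspde1}(i) and \ref{assspde}(ii)) the terminal datum and the generator perturbation of the difference $\delta y:=y^{\P,t,x}-y^{\P,t,x'}$ are bounded, deterministically, by $\rho(|x-x'|)$ and $C|x-x'|$ respectively. Using $|u(t,x)-u(t,x')|\leq \underset{\P\in\Pc^t}{\sup}\,|y_t^{\P,t,x}-y_t^{\P,t,x'}|$, the a priori BDSDE estimates of Theorem \ref{th:estimees} applied uniformly in $\P$, and the fact that $y_t^{\P,t,x}$ is $\Fc_{t,T}^W-$measurable (so that its $\P-$expectation is a $\P_0^W-$expectation), I would obtain a bound of the form $\E^{\P_0^W}\big[|u(t,x)-u(t,x')|^{2+\eps'}\big]\leq C\big(\rho(|x-x'|)^{2+\eps'}+|x-x'|^{2+\eps'}\big)$, where the deterministic nature of the data in fact allows any exponent $p\geq 2$.

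The dependence on $t$ is the main obstacle, and I would treat it through the dynamic programming principle of Theorem \ref{dynam prog}, written in Markovian form as $u(t,x)=\underset{\P\in\Pc^t}{{\rm ess\,sup}^{\P}}\,\Yc_t^{\P,t,x}(t',u(t',B_{t'}^{t,x}))$ for $t<t'$. Decomposing $u(t,x)-u(t',x)$ into the cost of replacing the terminal value $u(t',B_{t'}^{t,x})$ by $u(t',x)$ and the cost of running the BDSDE on the short interval $[t,t']$ with frozen terminal $u(t',x)$, the first contribution is controlled by the $x-$regularity just obtained together with $\E^{\P}[|B_{t'}^{t,x}-x|^2]\leq C(t'-t)$, which holds because $\underline a_{\P}\leq\widehat a\leq\overline a_{\P}$, and the second by the integrability of $\widehat F^0$ and $g^0$ over $[t,t']$ guaranteed by Assumption \ref{assspde1}(ii). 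The genuine difficulties here are that the family $\Pc^t$ over which the supremum runs itself depends on $t$, and that, as emphasized throughout the paper, the backward $\W-$integral forbids pathwise estimates; consequently every estimate must be carried out in the $L^2(\P_0^W)$ (equivalently $\L^2$) norms, and the supremum over measures must be handled with the essential--supremum and $\E_t^{\Pc,\P}$ machinery precisely as in the proof of Theorem \ref{thestiamte}.

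Finally, I would assemble the $x-$ and $t-$increment bounds into a single estimate for $\E^{\P_0^W}\big[|u(t,x)-u(t',x')|^{2+\eps'}\big]$ and deduce the existence of a $\P_0^W-$a.s. jointly continuous modification of $(t,x)\longmapsto u(t,x)$ by a Kolmogorov--Chentsov continuity criterion, the $2+\eps$ integrability of Assumption \ref{assspde1}(ii) providing the margin in the moment exponent and the uniform continuity of $\phi$ entering through the modulus $\rho$. Combined with the measurability noted at the outset, this shows that $u\in C(\Fc_{t,T}^W,[0,T]\times\R^d)$, as required.
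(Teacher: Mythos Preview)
Your proposal follows the same overall architecture as the paper: measurability from the Blumenthal $0$--$1$ law, $x$--regularity from BDSDE {\it a priori} estimates uniform in $\P$, $t$--regularity from a dynamic programming identity, and then Kolmogorov--Chentsov to produce a jointly continuous modification. The one substantive difference is in the $t$--continuity step. Rather than working at the BDSDE level with Theorem \ref{dynam prog} and handling the essential supremum over the $t$--dependent family $\Pc^t$ by hand, the paper exploits the 2BDSDE solution $(Y^{t,x},Z^{t,x},K^{t,x})$ together with the Markov identity $Y_{t'}^{t,x}=u(t',B_{t'}^{t,x})$. This yields the decomposition
\[
u(t',x)-u(t,x)=\big[u(t',x)-u(t',B_{t'}^{t,x})\big]+\big[Y_{t'}^{t,x}-Y_t^{t,x}\big],
\]
and the second bracket is controlled directly from the 2BDSDE equation by $\int_t^{t'}|f|\,dr$ and $K_{t'}^{t,x}-K_t^{t,x}$, whose $(2+\eps')$--moments are bounded via Theorem \ref{thestiamte}. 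This route sidesteps exactly the difficulties you flag --- the $t$--dependence of $\Pc^t$ and the $\E_t^{\Pc,\P}$ machinery --- because the supremum over measures has already been packaged into the non--decreasing process $K^{t,x}$. Your approach would in principle work but requires re--proving by hand what Theorem \ref{thestiamte} already delivers. One further point: the paper in fact uses Lipschitz continuity of $\phi$ in its $x$--estimate, which is what makes Kolmogorov--Chentsov applicable; a bare modulus $\rho(|x-x'|)$ as you write it does not feed into that criterion without such a strengthening.
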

\begin{proof}
Let us start with the uniform continuity in $x$. For any $(t,x), (t,x^{\prime})\in[0,T]\times \R^d$, we have for any $\P\in\Pc^t$
\begin{align*}
\E^{\P_0^W}[|u(t,x)-u(t,x^{\prime})|^2]&= \E^{\P_0^W}\left[|\underset{\P\in\Pc^{t}}{\Sup} y_t^{\P,t,x}-\underset{\P\in\Pc^{t}}{\Sup} y_t^{\P,t,x^{\prime}}|^2\right]\leq  \underset{\P\in\Pc^{t}}{\sup} \E^{\P_0^W}\left[|y_t^{\P,t,x}-y_t^{\P,t,x^{\prime}}|^2\right].
\end{align*}
But by the classical {\it a priori} estimates for BDSDEs, and using in particular the fact that $\phi$ is Lipschitz continuous, and $f$ and $g$ are Lipschitz continuous in $x$, we have for any $p\in[1,2]$
\begin{align*}
\E^{\P_0^W}\left[|y_t^{\P,t,x}-y_t^{\P,t,x^{\prime}}|^p\right]&\leq C\mathbb E^{\P_0^W}\left[|x-x'|^p+\int_t^T|B_s^{t,x}-B_s^{t,x'}|^pds\right]\leq C\No{x-x'}^p.
\end{align*}
By Kolmogorov--Chentsov's Theorem, this implies immediately that a $\P_0^W-$version of $u$ is $(1/2-\eta)-$H\"older continuous in $x$, for any $\eta\in(0,1/2)$.
 
\vspace{0.3em}
\noindent Next, for any $(t,t^{\prime},x)\in[0,T]^2\times \R^d$, we have the following classical dynamic programming result
$$Y_{t^\prime}^{t,x}=u(t^\prime,B_{t^\prime}^{t,x}),$$
which implies that for any $\P\in\Pc^t$ and any $\eps'\in(0,\eps)$, using the estimates of Theorem \ref{thestiamte}(i)
\begin{align*}
&\E^{\P_0^W}[|u(t^{\prime},x)-u(t,x)|^{2+\eps'}]\\
&= \E^{\P_0^W}\left[\abs{u(t^{\prime},x)-u(t^{\prime},B_{t^{\prime}}^{t,x})+Y_{t^{\prime}}^{t,x}-Y_t^{t,x}}^{2+\eps'}\right]\\
&\leq  C \E^{\P_0^W}[|u(t^{\prime},x)-u(t^{\prime},B_{t^{\prime}}^{t,x})|^{2+\eps'}]+C\abs{\E^{\P_0^W}\left[Y_{t^{\prime}}^{t,x}-Y_t^{t,x}\right]}^{2+\eps'}\\
&\leq   C\E^{\P_0^W}|x-B_{t^{\prime}}^{t,x}|^{2+\eps'}]+C\abs{\E^{\P_0^W}\left[\int_t^{t^\prime}{f}(r,B_r^{t,x},Y_r^{t,x},Z_r^{t,x},\widehat{a}_r)dr+\int_t^{t^\prime}dK_r^{t,x}\right]}^{2+\eps'}\\
&\leq C|t-t^\prime|^{1+\frac{\eps'}{2}}+C\underset{\P\in\Pc^{t}}{\Sup}\E^{\P_0^W}\left[\left(\int_t^{t^\prime}|{f}(r,B_r^{t,x},Y_r^{t,x},Z_r^{t,x},\widehat{a}_r)|dr\right)^{2+\eps'}+\left(\int_t^{t^\prime}dK_r^{t,x}\right)^{2+\eps'}\right]\\
&\leq C|t-t^\prime|^{1+\frac{\eps'}{2}}\left(1+\No{Y^{t,x}}^{1+\frac{\eps'}{2}}_{\mathbb D^{2+\eps'}}+\No{Z^{t,x}}^{1+\frac{\eps'}{2}}_{\mathbb H^{2+\eps'}}+\underset{\P\in\Pc^{t}}{\Sup}\E^{\P_0^W}\left[\left(K_T^{t,x}+\int_0^T|B_s^{t,x}|ds\right)^{2+\eps'}\right]\right)\\
&\leq C|t-t'|^{1+\frac{\eps'}{2}}.
\end{align*}
By Kolmogorov--Chentsov Theorem, this implies immediately that a $\P_0^W-$version of $u$ is $\eta-$H\"older continuous in $t$, for any $\eta\in(0,\eps/(2(2+\eps)))$.
\ep
\end{proof}
\subsubsection{Stochastic flow and definitions}
We follow Buckdahn and Ma \cite{buck:ma:10a}. The definition of our stochastic viscosity solution will depend on the following stochastic flow $\eta\in C(\F^W,[0,T]\times\R^d\times\R)$, defined as the unique solution of the (SDE)
\begin{align}\label{floweta}
\eta(t,x,y) = y + \Int_t^T g(s,x,\eta(s,x,y))\circ d\W_s, ~ 0\leq t\leq T.
\end{align}
Under Assumption \ref{assspde}, for fixed $x$ the random field $\eta(.,x,.)$ is continuously differentiable in the variable $y$, and the mapping $y\longmapsto\eta(t,x,y,\omega)$ defines a diffeomorphism for all $(t,x)$, $\P-a.s.$ 

\vspace{0.5em}
%
\vspace{0.5em}
\noindent We denote by $\mathcal{E}(t,x,y)$ the $y-$inverse of $\eta(t,x,y)$, so $\mathcal{E}(t,x,y)$ is the solution of the following first-order SPDE
\begin{align}\label{flowinverse}
\mathcal{E}(t,x,y)= y -\Int_t^T D_y\mathcal{E}(s,x,y)g(s,x,y)\circ d\W_s, ~ \forall (t,x,y),\ \P-a.s.
\end{align}
We note that $\mathcal{E}(t,x,\eta(t,x,y))= \mathcal{E}(T,x,\eta(T,x,y))=y , ~ \forall (t,x,y)$. We now define the notion of stochastic viscosity solution for SPDE (\ref{spde}).
\begin{Definition}\label{Definitionviscosity}
$(i)$ A random field $u\in C(\F^W,[0,T]\times\R^d)$ is called a stochastic viscosity subsolution $($resp. supersolution$)$ of SPDE \eqref{spde:viscosité}, if 
$$u(T,x)\leq (\text{resp.} \geq)\phi(x), ~ \forall x\in\R^d,$$ 
and if for any $\tau\in\Mc_{0,T}^W,$ $\zeta\in L^0(\Fc_{\tau,T}^W;\R^d)$, and any random field $\varphi\in C^{1,2}(\Fc_{\tau,T}^W,[0,T]\times\R^d)$ satisfying
$$u(t,x)-\eta(t,x,\varphi(t,x))\leq (\text{resp.} \geq)~ 0= u(\tau,\zeta)-\eta(\tau,\zeta,\varphi(\tau,\zeta),$$
for all $(t,x)$ in a neighborhood of $(\tau,\zeta),\ \P_W^0-a.e.$ on the set $\{0< \tau < T\}$,  it holds that 
$$ - \hat{h}(\tau,\zeta,\psi(\tau,\zeta),D\psi(\tau,\zeta),D^2\psi(\tau,\zeta))\leq (\text{resp.} \geq) D_y\eta(\tau,\zeta,\varphi(\tau,\zeta))D_t\varphi(\tau,\zeta),$$
$\P-a.e.$ on $\{0< \tau < T\}$, where $\psi(t,x):=\eta(t,x,\varphi(t,x)).$

\vspace{0.3em}
\noindent
$(ii)$ A random field $u\in C(\F^W,[0,T]\times\R^d)$ is called a stochastic viscosity solution of SPDE \eqref{spde:viscosité}, if it is both a stochastic viscosity subsolution and a supersolution.
\end{Definition}
\begin{Definition}
\label{wise-viscosity:definition}
A random field $u\in C(\F^W,[0,T]\times\R^d)$ is called a $\omega-$wise viscosity  $($sub--, super--$)$ solution, if for $\P-a.e.\ \omega\in\Omega,\ u(\omega,\cdot)$ is a $($deterministic$)$ viscosity $($sub--, super--$)$ solution of the SPDE \eqref{spde:viscosité}.
\end{Definition}
\begin{Remark}
If we assume that $\varphi\in C^{1,2}(\F^W,[0,T]\times\R^d)$, and that $g\in C^{0,0,3}([0,T]\times\R^d\times\R; \R^l)$, then a straightforward computation using the It\^o--Ventzell formula shows that the random field $\psi(t,x)=\eta(t,x,\varphi(t,x))$ satisfies
\begin{align}
d\psi(t,x)=D_y\eta(t,x,\varphi(t,x))D_t\varphi(t,x)dt +  g(t,x,\psi(t,x))\circ  d\W_t , \ t\in[0,T]. 
\end{align}
Since $g(\tau,\zeta,\psi(\tau,\zeta))=g(\tau,\zeta,u(\tau,\zeta))$ by defintion, it seems natural to compare $$\hat{h}(\tau,\zeta,\psi(\tau,\zeta),D\psi(\tau,\zeta),D^2\psi(\tau,\zeta)), \ \text{with}\ D_y\eta(\tau,\zeta,\varphi(\tau,\zeta))D_t\varphi(\tau,\zeta),$$
to characterize a viscosity solution of SPDE $(\hat{h},g)$.

\vspace{0.3em}
\noindent If the function $g\equiv 0$ in SPDE \eqref{spde}, the flow $\eta$ becomes $\eta(t,x,y)=y$, $\forall (t,x,y)$ and $\psi(t,x)=\varphi(t,x)$. Thus the definition of a stochastic viscosity solution becomes the same as that of a deterministic viscosity solution $($see, e.g. Crandall, Ishii and Lions {\rm\cite{CIL92}}$)$.
\end{Remark}
\noindent One of  the main results of our paper is the following probabilistic representation of stochastic viscosity solution for fully nonlinear SPDEs, which is, to the best of our knowledge, the first result of this kind for such a class of SPDEs. The proof will be obtained in the subsequent subsections.
\begin{Theorem}
\label{representationsolutionviscosité:theorem}
Let Assumptions \ref{assspde}, \ref{assspde1} and \ref{extraasumption} hold true and  $( Y_s^{t,x},  Z_s^{t,x}, K_s^{t,x}) $ be the unique solution of the $2$BDSDE \reff{eqmarkovian}. Then, 
$$u(t,x)=Y_t^{t,x} = \underset{\P\in\Pc^{t}}{\Sup} y_t^{\P,t,x}(T,\phi(B_T^{t,x})),  \ \P_W^0-a.s,\ \text{for all $(t,x)\in[0,T]\times \R^d$},$$ 
 is a stochastic viscosity solution of SPDE \eqref{spde:viscosité}. Moreover,  
$$
u(t,x)=\eta(t,x,v(t,x)),~ v(t,x)=\mathcal{E}(t,x, u(t,x)),  \ \P_W^0-a.s.,  \ \mbox{and}  ~v(t,x)=U_t^{t,x},
$$
where $(U^{t,x},V^{t,x},\tilde{K}^{t,x})$ is a solution of the following $2$BSDE, for all $t\leq s\leq T$,
\begin{align}
U_s^{t,x}= \phi(B_T^{t,x})-\Int_s^T\tilde{f}(r,B_r^{t,x},Y_r^{t,x},Z_r^{t,x},\widehat{a}_r)dr - \Int_s^T V_r^{t,x}\cdot dB_r + \tilde{K}_T^{t,x}-\tilde{K}_s^{t,x}, 
\label{2BSDE}
\end{align}
with $\tilde{f}:[0,T]\times\R^d\times\R\times\R^d\times D_f\longrightarrow\R$ defined by
\begin{align}
\label{Doss-Sussmann-coefficient}
\tilde{f}(t,x,y,z,a):=&\  \Frac{1}{D_y\eta(t,x,y)}\left(f(t,x,\eta(t,x,y),D_y\eta(t,x,y)z+D_x\eta(t,x,y),a)-\Frac{1}{2}{\rm Tr}[a D_{xx}\eta(t,x,y)]\right.\nonumber\\
&\left.- z^\top a D_{xy}\eta(t,x,y)-\Frac{1}{2}{\rm Tr}\left[D_{yy}\eta(t,x,y)a^{1/2}zz^\top a^{1/2}\right]\right).
\end{align}
\end{Theorem}

\subsubsection{Doss--Sussmann transformation}
\label{Doss-Sussmann-section}
In this subsection, we use the so--called Doss--Sussmann transformation  to convert the  fully nonlinear SPDEs \eqref{spde:viscosité} to PDEs with random coefficients. This transformation permits to remove the martingale term from the SPDEs. To begin with, let us note that, under Assumption \ref{assspde} (iii), the random field $\eta\in C^{0,2,2}(\F^W,[0,T]\times\R^d\times\R)$, thus so is $\Ec$. Now for any random field $\psi:[0,T]\times \R^d\times \Omega\longrightarrow \R$, consider the transformation introduced in Definition \ref{Definitionviscosity}
$$\varphi(t,x)=\Ec(t,x,\psi(t,x)), \ (t,x)\in [0,T]\times \R^d,$$
or equivalently, $\psi(t,x)=\eta(t,x,\varphi(t,x)).$ One can easily check that $\psi\in C^{0,p}(\F^W, [0,T]\times \R^d)$ if and only if $\varphi\in C^{0,p}(\F^W, [0,T]\times \R^d)$, for $p= 0,1,2.$ Moreover, if $\varphi\in C^{0,2}(\Fc^W, [0,T]\times \R^d)$, then $$D_x\psi= D_x\eta + D_y\eta D_x\varphi,$$

\vspace{-2.3em}

\begin{align}\label{deriveeseconde}
D_{xx}\psi= D_{xx}\eta + 2(D_{xy}\eta)(D_x\varphi)^{\top}+ (D_{yy}\eta)(D_x\varphi)(D_x\varphi)^{\top}+(D_{y}\eta)(D_{xx}\varphi).
\end{align}
Furthermore, since $\Ec(t,x,\eta(t,x,y))\equiv y, \ \forall (t,x,y),\ \P-a.s.$, differentiating the equation up to the second order we have (suppressing variables fro simplicity), for all $(t,x,y)$ and $\P-a.s.$,
\begin{align} 
\begin{split}
& D_x\Ec + D_y\Ec D_x\eta = 0,\ D_y\Ec D_y\eta = 1,\\
& D_{xx}\Ec + 2(D_{xy}\Ec)(D_x\eta)^{\top}+ (D_{yy}\Ec)(D_x\eta)(D_x\eta)^{\top}+(D_{y}\Ec)(D_{xx}\eta)=0,\\
&(D_{xy}\Ec)(D_y\eta)+ (D_{yy}\Ec)(D_x\eta)(D_y\eta)+(D_{y}\Ec)(D_{xy}\eta)=0,\\
&(D_{yy}\Ec)(D_y\eta)^2+(D_{y}\Ec)(D_{yy}\eta)=0.
\end{split}
\end{align}

\noindent The following additional assumption  is needed to study the growth of the random fields  $ \eta$  and $\Ec$ (see \cite{buck:ma:10a}, p.188--189).
\begin{Assumption}\label{extraasumption}
For any $\varepsilon >0$, there exists a function $G^{\varepsilon}\in C^{1,2,2,2}([0,T]\times \R^n\times\R^d\times \R)$, such that 
$$\Frac{\partial G^{\varepsilon}}{\partial t}(t,w,x,y)=\varepsilon,~ \Frac{\partial G^{\varepsilon}}{\partial w^i}= g^i(t,x,G^{\varepsilon}(t,w,x,y)),~ i=1,\cdots,n, \text{ and}~ G^{\varepsilon}(0,0,x,y)=y.$$
\end{Assumption}

\vspace{0.5em}
\begin{Proposition}\label{prop:prop}
Let $\eta$ be the unique solution to SDE \eqref{floweta} and $\Ec$ be the $y-$inverse of $\eta$ $($the solution to \eqref{flowinverse}$)$. Then, under Assumption \ref{extraasumption}, there exists a constant $C>0$, depending only on the bound of $g$ and its partial derivatives, such that for $\zeta=\eta, \Ec$, it holds for all $(t,x,y)$ and $\P_0^W-a.s.$ that
\begin{align*}
&|\zeta(t,x,y)\leq |y|+C |W_t|,\\
&|D_{x}\zeta|+|D_{y}\zeta|+|D_{xx}\zeta|+|D_{xy}\zeta|+|D_{yy}\zeta|\leq C \exp{\left(C|W_t|\right)},
\end{align*}
where all the derivatives are evaluated at $(t,x,y)$.
\end{Proposition}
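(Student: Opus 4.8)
The plan is to use Assumption \ref{extraasumption} in order to reduce the stochastic flows to deterministic characteristic flows evaluated along the Brownian path, and then to read off all the bounds from elementary estimates for ordinary differential equations. The auxiliary field $G^\varepsilon$ is introduced precisely so that, once the trajectory of $W$ is frozen, the equation \eqref{floweta} defining $\eta$ becomes a pathwise first--order problem driven by $g$, for which bounds that are linear, respectively exponential, in the size of the driving path are available; these are much finer than the $L^p$ bounds one would get from a direct moment estimate on the SDE.

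The first, and main, step is to identify $\eta$ with the characteristic flow. Applying the It\^o--Wentzell formula to the smooth field $(s,w)\longmapsto G^\varepsilon(s,w,x,y)$ composed with $W$, and using the two identities $\partial_t G^\varepsilon=\varepsilon$ and $\partial_{w^i}G^\varepsilon=g^i(t,x,G^\varepsilon)$ of Assumption \ref{extraasumption}, one checks that the process obtained by evaluating $G^\varepsilon$ along the path of $W$ solves a backward Stratonovich equation that differs from \eqref{floweta} only through a drift of order $\varepsilon$. Since $g$ is Lipschitz in its last argument by Assumption \ref{assspde}(ii), equation \eqref{floweta} is uniquely solvable, and letting $\varepsilon\downarrow 0$ I would obtain
$$\eta(t,x,y)=G^0(t,W_t,x,y),\ \P_0^W-a.s.,$$
where $G^0:=\lim_{\varepsilon\downarrow 0}G^\varepsilon$ solves $\partial_{w^i}G^0=g^i(t,x,G^0)$ with $G^0(0,0,x,y)=y$ and $\partial_t G^0=0$. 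I expect this identification to be the crux of the argument: one must justify the limit (together with the convergence of the relevant derivatives of $G^\varepsilon$, which is where the boundedness of $g$ and of its derivatives enters) and keep careful track of the backward direction of integration.

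Granting the representation, the bounds become routine estimates along the segment of path joining the origin to the current value of $W$. The function--value bound follows from $\partial_{w^i}G^0=g^i$ together with the uniform bound $|g|\leq C$: integrating the gradient along the path and using $G^0(t,0,x,y)=y$ gives $|\eta(t,x,y)|\leq |y|+C|W_t|$. For the derivative bounds I would differentiate the characteristic relation $\partial_{w^i}G^0=g^i(t,x,G^0)$ in $x$ and in $y$. This yields, for each of $D_yG^0$, $D_xG^0$, $D_{yy}G^0$, $D_{xy}G^0$ and $D_{xx}G^0$, a linear first--order system in the variable $w$ whose homogeneous coefficient is $D_yg$ and whose inhomogeneous terms involve only $g$ and the already--controlled lower--order derivatives; here the hypothesis $g\in C_b^{0,2,3}$ guarantees that all derivatives of $g$ up to second order in $x$ and third order in $y$ are bounded. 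Starting from the initial data $D_yG^0(0,0,\cdot)=1$ and $D_xG^0(0,0,\cdot)=0$ and applying Gronwall's lemma along the path then produces, for each of these derivatives, a bound of the form $C\exp(C|W_t|)$, with $C$ depending only on the bounds of $g$ and its derivatives.

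Finally, the estimates for $\Ec$ follow from the very same scheme applied to the inverse flow. One may argue directly on the first--order equation \eqref{flowinverse}, again through a characteristic representation of the same type, or instead invoke the inverse--function identities recorded just before the statement. In particular $D_y\Ec=1/D_y\eta$, and since the linear equation for $D_y\eta$ forces $D_y\eta\geq \exp(-C|W_t|)$, one gets $|D_y\Ec|\leq\exp(C|W_t|)$; the remaining derivatives of $\Ec$ are then polynomial expressions in the derivatives of $\eta$ divided by powers of $D_y\eta$, hence also bounded by $C\exp(C|W_t|)$, while the bound $|\Ec(t,x,y)|\leq|y|+C|W_t|$ is obtained exactly as for $\eta$.
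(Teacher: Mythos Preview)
The paper does not give its own proof of this proposition; it simply refers to \cite{buck:ma:10a}, pp.~189--191. Your sketch is precisely the argument carried out there: one uses Assumption \ref{extraasumption} to represent the Stratonovich flow $\eta$ pathwise as $G^\varepsilon$ evaluated along the Brownian trajectory (up to an $\varepsilon$--drift), passes to the limit $\varepsilon\downarrow 0$, and then reads off the linear bound on $\eta$ from the boundedness of $g$ and the exponential bounds on the derivatives from Gronwall applied to the linear variational equations, with the estimates for $\Ec$ following from the inverse--function identities. The only point to tighten is the exact form of the representation: since \eqref{floweta} is a \emph{backward} equation with $\eta(T,x,y)=y$, the correct identification involves the increment $W_T-W_t$ (and the time variable $T-t$) rather than $W_t$ itself, so that the initial condition $G^\varepsilon(0,0,x,y)=y$ matches $\eta(T,x,y)=y$; once this is adjusted, your argument goes through verbatim.
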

\noindent The proof of this proposition is done in \cite{buck:ma:10a}, p.189--191, so we omit it. Now, we will use the Doss transformation to transform SPDE (\ref{spde}) to PDE with random coefficients and we obtain the following proposition where the proof follows the lines of the proof of Proposition 3.1. in \cite{buck:ma:10a} (p. 187-188).
\begin{Proposition}
Let Assumptions \ref{assspde}, \ref{assspde1} and \ref{extraasumption} hold true. A random field $u$ is a stochastic viscosity sub- $($resp. super-$)$ solution to SPDE \eqref{spde:viscosité} if and only if $v(.,.)= \mathcal{E}(.,.,u(.,.))$ is a stochastic viscosity solution to the following PDE with random coefficients
\begin{align}\label{pde:viscosité}
\begin{cases}
dv(t,x) + \tilde{h}(t,x,v(t,x),Dv(t,x),D^2v(t,x))dt = 0,\\
v(T,x)= \phi(x),
\end{cases}
\end{align} 
with
\begin{align*} 
\tilde{h}(t,x,y,z,\gamma):=& \sup_{a\in\mathbb S_d^{>0}}\left\{\frac12{\rm Tr}\left[a\gamma\right]-\tilde f(t,x,y,z,a)\right\}+ \Frac{1}{2}{\rm Tr}[g(s,x,\eta(s,x,y))D_yg(s,x,\eta(s,x,y))^\top].
\end{align*}
Consequently, $u$  is a stochastic viscosity solution of SPDE \eqref{spde:viscosité} if and only if $v(.,.)= \mathcal{E}(.,.,u(.,.))$ is a stochastic viscosity solution to the PDE with random coefficients \eqref{pde:viscosité}.

\end{Proposition}
\begin{proof} Let $u\in C(\F^W,[0,T]\times\R^d)$ be a stochastic viscosity subsolution of SPDE \eqref{spde:viscosité} and let $v$ be defined by $v(t,x)= \mathcal{E}(t,x,u(t,x))$. In order to show that $v$ is  a stochastic viscosity subsolution to the PDE \eqref{pde:viscosité}, we let $\tau\in\Mc_{0,T}^W,$ $\zeta\in L^0(\Fc_{\tau,T}^W;\R^d)$ be arbitrary given, and let $\varphi\in C^{1,2}(\Fc_{\tau,T}^W,[0,T]\times\R^d)$ be such that
$$v(t,x)-\varphi(t,x)\leq  0= v(\tau,\zeta)-\varphi(\tau,\zeta),$$
for all $(t,x)$ in a neighborhood of $(\tau,\zeta),\ \P_W^0-a.e.$ on the set $\{0< \tau < T\}$.\\
Now, we define $\psi(t,x)=\eta(t,x,\varphi(t,x)),\, \forall (t,x)\,\, \P_W^0-a.e.$ Since $y\longmapsto \eta(t,x)$ is strictly increasing, we have
\begin{align}
u(t,x)-\psi(t,x)&=\eta(t,x,v(t,x))-\eta(t,x,\varphi(t,x))\nonumber\\
&\leq 0=\eta(\tau,\zeta,v(\tau,\zeta))-\eta(\tau,\zeta,\varphi(\tau,\zeta))= u(\tau,\zeta)-\psi(\tau,\zeta),
\end{align}
for all $(t,x)$ in a neighborhood of $(\tau,\zeta),\ \P_W^0-a.e.$ on the set $\{0< \tau < T\}$. Then, since $u$ is a stochastic viscosity subsolution of SPDE \eqref{spde:viscosité}, we have $\P_W^0-a.e.$ on $\{0< \tau < T\}$,
\begin{equation}\label{subsolution}
 - \hat{h}(\tau,\zeta,\psi(\tau,\zeta),D\psi(\tau,\zeta),D^2\psi(\tau,\zeta))\leq  D_y\eta(\tau,\zeta,\varphi(\tau,\zeta))D_t\varphi(\tau,\zeta).
\end{equation}
On the other hand, we recall the expression of $\hat{h}$,
\begin{align*}
\hat{h}(t,x,y,z,\gamma) &:=  \underset{a\in \S_d^{>0}}{\Sup} \left\{\Frac{1}{2} {\rm Tr}[a\gamma] - F(t,x,y,z,a)\right\}\\
&= \underset{a\in \S_d^{>0}}{\Sup} \left\{\Frac{1}{2} {\rm Tr}[a\gamma] - f(s,x,y,z,\widehat{a}_s)+\Frac{1}{2}{\rm Tr}[g(s,x,y)D_yg(s,x,y)^\top].\right\}
\end{align*}
Thus, using \eqref{deriveeseconde}, we have
\begin{align*}
{\rm Tr}[aD^2\psi(\tau,\zeta)]=&\ {\rm Tr}[aD_{xx}\eta(\tau,\zeta,\varphi(\tau,\zeta))]+{\rm Tr}[(D_x\varphi(\tau,\zeta))^{\top}a(D_{xy}\eta(\tau,\zeta,\varphi(\tau,\zeta))]\\
&+ {\rm Tr}[a(D_{yy}\eta(\tau,\zeta,\varphi(\tau,\zeta)))(D_x\varphi(\tau,\zeta))(D_x\varphi(\tau,\zeta))^{\top}]+ {\rm Tr}[a (D_{y}\eta(\tau,\zeta,\varphi(\tau,\zeta)))(D_{xx}\varphi(\tau,\zeta))],
\end{align*}
Finally, plugging the above calculations in \eqref{subsolution} and appealing to \eqref{Doss-Sussmann-coefficient}, we conclude that
\begin{equation}
 - \tilde{h}(\tau,\zeta,\varphi(\tau,\zeta),D\varphi(\tau,\zeta),D^2\varphi(\tau,\zeta))\leq  D_t\varphi(\tau,\zeta),
\end{equation} 
which is the desired result. The reciprocal part of the proposition can be proved in a similar way.\\
\ep 
\end{proof}
\noindent 
We apply now Doss--Sussmann transformation for the $2$BDSDE (\ref{eqmarkovian}) so that the Stratonovich backward integral vanishes. Thus, the $2$BDSDE will become a $2$BSDE (a standard one) with a new generator $\tilde{f}$ \eqref{Doss-Sussmann-coefficient}, which is quadratic in $z$. A similar class of 2BSDEs has been studied by Possama{\"{\i}} and Zhou \cite{PZ13} and Lin \cite{L14} in the case of a bounded final condition $\phi(B_T^{t,x})$ and a generator $F$ satisfying (see Assumption 2.1. (iv) p. 3776 in \cite{PZ13})
$$|F_t(x,y,z,a)|\leq \alpha+\beta|y|+\frac{\gamma}{2}|a^{1/2}z|^2,$$
for some positive constants $\alpha$, $\beta$ and $\gamma$. In our case, for fixed $\omega^W$, thanks to Proposition \ref{prop:prop}, we know that the generator $\tilde f$ satisfies a similar assumption, so that we can apply the results of \cite{PZ13} or \cite{L14}.

\vspace{0.5em}
\noindent Let us then define the following three processes
\begin{align}
U_s^{t,x} &:= \mathcal{E}(t,B_s^{t,x},Y_s^{t,x}),\nonumber \\
V_s^{t,x} &:= D_y\mathcal{E}(s,B_s^{t,x},Y_t^{t,x})Z_s^{t,x}+D_x\mathcal{E}(s,B_s^{t,x},Y_s^{t,x}),\nonumber \\
\tilde{K}_s^{t,x} &:= \Int_0^s D_y\mathcal{E}(r,B_r^{t,x},Y_r^{t,x})dK_r^{t,x}.
\label{dosstransf}
\end{align}
\begin{Theorem} Let Assumptions \ref{assspde},  \ref{assspde1} and \ref{extraasumption} hold true. Then
$(U^{t,x},V^{t,x},\tilde{K}^{t,x})$ is  the unique solution of the $2$BSDE \eqref{2BSDE}. 
\end{Theorem}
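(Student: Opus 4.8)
The plan is to derive the $2$BSDE \eqref{2BSDE} by applying the generalized It\^o--Ventzell formula of Kunita \cite{K90} to the composition $U_s^{t,x}=\mathcal{E}(s,B_s^{t,x},Y_s^{t,x})$, where $\mathcal{E}$ is the inverse stochastic flow solving the first order SPDE \eqref{flowinverse} and $(Y^{t,x},Z^{t,x},K^{t,x})$ is the solution of the $2$BDSDE \eqref{eqmarkovian} furnished by Theorem \ref{Thexistence}. First I would fix an arbitrary $\P\in\Pc^t$ and argue $\P-a.s.$, recording that under any such measure the two driving noises $B$ and $\W$ are independent, so that they have no mutual cross--variation, while the quadratic variation of $B^{t,x}$ has density $\widehat a$. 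It is convenient to read the $2$BDSDE in its It\^o form \eqref{eqmarkovianmod}, whose generator is $f$, so that the only backward martingale carried by $Y^{t,x}$ is the $\W$--integral with integrand $g(\cdot,B^{t,x},Y^{t,x})$.

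The key point, and the very reason $\mathcal{E}$ was constructed as the $y$--inverse of the flow $\eta$ of \eqref{floweta}, is that the $\W$--dynamics of the random field $\mathcal{E}$ match those of $Y^{t,x}$: the backward martingale part of $d_s\mathcal{E}(s,x,y)$ has integrand $D_y\mathcal{E}(s,x,y)g(s,x,y)$, which upon composition with $(B_s^{t,x},Y_s^{t,x})$ exactly offsets the term $D_y\mathcal{E}\,g$ coming from $D_y\mathcal{E}$ multiplied by the $\W$--part of $dY^{t,x}$. Hence the It\^o--Ventzell expansion of $U^{t,x}$ contains no backward integral against $\W$; this Doss--Sussmann cancellation is what turns the doubly stochastic equation into a standard one.

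It then remains to collect the surviving terms. The $dB$--coefficient is $D_y\mathcal{E}\,Z^{t,x}+D_x\mathcal{E}=V^{t,x}$, matching the definition in \eqref{dosstransf}; the nondecreasing process contributes $D_y\mathcal{E}\,dK^{t,x}=d\tilde K^{t,x}$; and the finite variation part is
$$D_y\mathcal{E}\,f+\Frac{1}{2}{\rm Tr}[\widehat a\,D_{xx}\mathcal{E}]+Z^{t,x,\top}\widehat a\,D_{xy}\mathcal{E}+\Frac{1}{2}D_{yy}\mathcal{E}\,\|\widehat a^{1/2}Z^{t,x}\|^2,$$
the last three summands being the It\^o corrections produced by the smooth dependence of $\mathcal{E}$ on $x$ composed with $B^{t,x}$ and on $y$ through the $Z^{t,x}\,dB$--part of $Y^{t,x}$. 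Differentiating the identity $\mathcal{E}(t,x,\eta(t,x,y))\equiv y$ yields the relations displayed right after \eqref{deriveeseconde}, which express $D_x\mathcal{E},D_{xx}\mathcal{E},D_{xy}\mathcal{E},D_{yy}\mathcal{E}$ through the derivatives of $\eta$ together with $D_y\mathcal{E}=1/D_y\eta$; substituting them, the displayed drift collapses precisely to the generator $\tilde f$ of \eqref{Doss-Sussmann-coefficient}, so that $(U^{t,x},V^{t,x},\tilde K^{t,x})$ solves \eqref{2BSDE}.

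Finally I would check admissibility and uniqueness. Since $\eta$ is a strictly increasing diffeomorphism one has $D_y\mathcal{E}>0$, so $\tilde K^{t,x}$ is nondecreasing, and by Proposition \ref{prop:prop} the factor $D_y\mathcal{E}$ is, for each frozen $\omega^W$, bounded above and away from $0$; the minimality condition \eqref{eqmin} for $\tilde K^{t,x}$ therefore transfers from that of $K^{t,x}$ by the same essential--infimum and linearisation arguments as in Lemma \ref{eq:mincond} and the proof of Theorem \ref{theorep}. As for uniqueness, although $\tilde f$ is quadratic in $z$, Proposition \ref{prop:prop} bounds every coefficient entering \eqref{Doss-Sussmann-coefficient} by quantities exponential in $|W_t|$, so for fixed $\omega^W$ the generator $\tilde f$ fits the structural hypotheses of the quadratic $2$BSDE theory of Possama\"i and Zhou \cite{PZ13} and of Lin \cite{L14}, whose wellposedness results give existence and uniqueness for \eqref{2BSDE}; combined with the construction above this identifies $(U^{t,x},V^{t,x},\tilde K^{t,x})$ as its unique solution. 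The main obstacle I anticipate is precisely the It\^o--Ventzell bookkeeping: simultaneously handling the backward--Stratonovich cancellation in $\W$ and the forward--It\^o corrections in $B$ under the volatility--uncertainty density $\widehat a$, and then matching the resulting drift to the quadratic generator $\tilde f$ via the derivative relations for $\mathcal{E}$; verifying that the minimality condition genuinely survives multiplication by the solution--dependent factor $D_y\mathcal{E}$ is the other delicate point.
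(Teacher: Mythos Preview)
Your proposal follows essentially the same route as the paper: apply the generalized It\^o--Ventzell formula (the paper uses its own Lemma~\ref{ItoVentzellLemma}) to $\mathcal{E}(s,B_s^{t,x},Y_s^{t,x})$, observe the Doss--Sussmann cancellation of the $\W$--integral, identify the resulting drift with $\tilde f$ via the derivative relations for $\mathcal E$, note that $\tilde K$ is nondecreasing because $D_y\mathcal E>0$, and transfer the minimality condition by the linearisation argument of Step~(ii) of Theorem~\ref{theorep}.

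The one point of divergence is uniqueness. You invoke the quadratic $2$BSDE wellposedness theory of \cite{PZ13,L14}; the paper instead observes that the Doss--Sussmann map $(Y,Z,K)\mapsto(U,V,\tilde K)$ is a bijection with explicit inverse \eqref{inversetransf}, so uniqueness of \eqref{2BSDE} is inherited directly from the already--established uniqueness of the $2$BDSDE \eqref{eqmarkovian}. The paper's route is cleaner here: it avoids having to check that $\tilde f$, whose coefficients are random and only controlled by $\exp(C|W_t|)$ via Proposition~\ref{prop:prop}, actually satisfies the structural hypotheses of \cite{PZ13} in a way compatible with the product--space formulation. Your appeal to \cite{PZ13,L14} is in the spirit of the paper's preliminary discussion, but the bijection argument is both simpler and more self--contained.
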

\begin{proof}
It is easily checked that the mapping $(B,Y,Z,K)\longmapsto (B,U,V,\tilde{K})$ is one--to--one, and admits as an inverse
\begin{align}
Y_t=\eta(t,B_t,U_t),\ Z_t=D_y\eta(t,B_t,U_t)V_t+D_x\eta(t,B_t,U_t),\ K_t = \Int_0^t D_y\eta(s,B_s,U_s)d\tilde{K}_s.
\label{inversetransf}
\end{align}
Consequently, the uniqueness of (\ref{2BSDE}) follows from that of $2$BDSDE (\ref{eqmarkovian}), thanks to (\ref{dosstransf}) and (\ref{inversetransf}). Thus we need only show that $(U,V,\tilde{K})$ is a solution of the $2$BSDE (\ref{2BSDE}). Applying the generalized It\^o--Ventzell formula (see Lemma \ref{ItoVentzellLemma} below) to $\mathcal{E}(t,B_t,Y_t)$, one derives that for any $(t,x)\in [0,T]\times\R^d$
\begin{align*}
U_t = \mathcal{E}(t,B_t,Y_t)=&\ \phi(B_T) - \Int_t^T D_y\mathcal{E}(s,B_s,Y_s)f(s,B_s,Y_s,Z_s,\widehat{a}_s)ds\\
&-\Int_t^T D_x\mathcal{E}(s,B_s,Y_s)\cdot dB_s-\Int_t^T D_y\mathcal{E}(s,B_s,Y_s)Z_s\cdot dB_s + \Int_t^T D_y\mathcal{E}(s,B_s,Y_s)dK_s \\
&-\Frac{1}{2}\Int_t^T {\rm Tr}[D_{xx}\mathcal{E}(s,B_s,Y_s)\widehat{a}_s]ds-\Frac{1}{2}\Int_t^T {\rm Tr}[D_{yy}\mathcal{E}(s,B_s,Y_s)\widehat{a}_s^{1/2}Z_sZ_s^\top\widehat a_s^{1/2}]ds\\
&-\Int_t^T {\rm Tr}[D_{xy}\mathcal{E}(s,B_s,Y_s)Z_s^\top\widehat{a}_s]ds\\
=&\ \phi(B_T)-\Int_t^T\mathcal{H}(s,B_s,Y_s,Z_s,\widehat{a}_s)ds- \Int_t^T V_s\cdot dB_s + \tilde{K}_T-\tilde{K}_t,
\end{align*}
where 
\begin{align*} 
\mathcal{H}(s,x,y,z,a):= &\ (D_y\mathcal{E})f(s,x,y,z,a)+\Frac{1}{2} {\rm Tr}[(D_{xx}\mathcal{E)}a]+\Frac{1}{2} {\rm Tr}[(D_{yy}\mathcal{E}){a}^{1/2}zz^\top a^{1/2}]+{\rm Tr}[(D_{xy}\mathcal{E})z^\top a].
\end{align*}
Next, we can show that
\begin{align} 
\mathcal{H}(s,B_s,Y_s,Z_s,\widehat{a}_s)=\tilde{f}(s,B_s,U_s,V_s,\widehat{a}_s),\  \forall s\in[0,T],~ \P-a.s.
\end{align}
similarly as done in Buckdahn and Ma $\cite{buck:ma:10a}$ (proof of Theorem 5.1. page 198--199).

\vspace{0.3em}
\noindent The process $\tilde{K}$ is a non-decreasing process thanks to the fact that $y\longmapsto \eta(t,x,y)$ is strictly increasing and the non-decreasing of $K$. Now, it remains to prove the minimum condition (\ref{eqmin}) for the process $\tilde{K}$. Notice that by Proposition \ref{prop:prop}, and since $\widehat a$ is bounded under any of the measure we consider, it is clear that $D_y\mathcal{E}(r,B_r^{t,x},Y_r^{t,x})$ has moments of any order under any $\P$. Therefore, we can argue exactly as in Step (ii) of the proof of Theorem \ref{theorep} to show that $\tilde K$ inherits the required minimality condition directly from $K$.
\ep
\end{proof}

\vspace{0.5em}
\noindent We are now ready for the proof of our main theorem

\vspace{0.5em}
\proof[Proof of Theorem \ref{representationsolutionviscosité:theorem}]
First, we introduce the random field $v(t,x)=U_t^{t,x}$, where $U$ is the solution of $2$BSDE (\ref{2BSDE}).
 Then by (\ref{dosstransf}) and (\ref{inversetransf}) we know that, for $(t,x)\in[0,T]\times\R^d$
\begin{align}
u(t,x)=\eta(t,x,v(t,x)),\, v(t,x)=\mathcal{E}(t,x,u(t,x)).
\label{transf}
\end{align}
 Thanks to Proposition 5.1, we know that we only need to prove that the random field $v$ defined in (\ref{transf}) is a viscosity solution of the PDE with random coefficients \eqref{pde:viscosité}. The idea is then to follow the proof in \cite[Theorem 7.3]{PZ13}, which itself follows \cite[Theorem 5.11]{STZ10}, to prove that the solution of $2$BSDE (\ref{2BSDE}) $v(t,x)=U_t^{t,x}$ is an $\omega-$wise viscosity solution of the PDE \eqref{pde:viscosité} (recall Definition \ref{wise-viscosity:definition}), which then ends the proof. It suffices to notice that the fact that $f$ satisfies Assumptions \ref{assspde} and Assumption \ref{assspde} implies that, for fixed $\omega^W$, $\tilde f$ satisfies Assumption 7.1 of \cite{PZ13}.
\ep


\begin{appendix}
\section{Technical results}\label{Appendix Chapter2}
\subsection{It\^o and It\^o--Ventzell formulae}
\label{proof of lemmaitô} 
 The following It\^o's formula is a mix between the classical forward and backward It\^o's formulas and is similar to Lemma 1.3 in \cite{pp1994}. We give it here for ease of reference and completeness. The proof being standard, we omit it.
 \begin{Lemma}\label{lemma:ito}
 Let $X^1$ and $X^2$ be defined, for $i=1,2$, by
 $$X^i_t=X_0^i+\int_0^t\alpha_s^ids+\int_0^t\beta_s^i\cdot dB_s+\int_0^t\gamma_s^i\cdot d\W_s+K^i_t,\ 0\leq t\leq T,\ \P-a.s.,$$
 for some c\`adl\`ag bounded variation and $\G-$progressively measurable processes $K^i$, such that one of them is continuous. We then have
\begin{align*}
X^1_t X^2_t=&\ X^1_0X^2_0 +\int_0^t\left(\widehat a_s^{1/2}\beta_s^1\cdot\widehat a_s^{1/2}\beta_s^2  -\gamma_s^1\cdot\gamma_s^2 +\alpha_s^1X_s^2+\alpha_s^2X_s^1\right)ds+\int_0^t\left(X_s^2\beta_s^1+X_s^1\beta_s^2\right)\cdot dB_s\\
 &+\int_0^t\left(X_s^1\gamma_s^2+X_s^2\gamma_s^1\right)\cdot d\W_s+\int_0^tX^1_{s^-}dK^2_s+\int_0^tX^2_{s^-}dK_s^1,\ t\in[0,T],\ \P-a.s.
 \end{align*}
 \end{Lemma}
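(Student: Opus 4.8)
The plan is to reduce the statement to the \emph{separate} forward and backward It\^o product rules, together with the vanishing of the covariation between $B$ and $W$. I decompose each process as $X^i=V^i+M^i+N^i$, where $V^i_t:=X_0^i+\int_0^t\alpha^i_s\,ds+K^i_t$ is c\`adl\`ag of finite variation, $M^i_t:=\int_0^t\beta^i_s\cdot dB_s$ is a forward $(\P,\G)$-martingale, and $N^i_t:=\int_0^t\gamma^i_s\cdot d\W_s$ is the backward It\^o integral against $W$; both $M^i$ and $N^i$ are continuous. Expanding $X^1_tX^2_t$ into the nine products arising from $(V^1+M^1+N^1)(V^2+M^2+N^2)$, I treat each pair by integration by parts and then recombine the coefficients using $X^i=V^i+M^i+N^i$.

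The heart of the matter is the three second-order (bracket) contributions. Under a fixed $\P\in\Pc$ the canonical process $B$ is a square-integrable martingale with $d\langle B\rangle_s=\widehat a_s\,ds$, by the first condition in Definition \ref{defP}; hence the pair $M^1M^2$ produces, through the forward product rule, the term $+\int_0^t\widehat a_s^{1/2}\beta^1_s\cdot\widehat a_s^{1/2}\beta^2_s\,ds$. Since the backward It\^o integral is defined through right-endpoint Riemann sums (Kunita \cite{K90}, pp.~111--112), its product rule carries the It\^o correction with the \emph{opposite} sign, so the pair $N^1N^2$ produces $-\int_0^t\gamma^1_s\cdot\gamma^2_s\,ds$. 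The four mixed pairs $M^iN^j$ contribute \emph{no} bracket at all, because $\langle B,W\rangle=0$, $\P$-a.s., for every $\P\in\Pc$: indeed, by the disintegration $d\P(\omega^B,\omega^W)=d\P^{\alpha,\omega^W}(\omega^B)\,d\P^0_W(\omega^W)$, the process $B$ is a martingale once $\omega^W$ is frozen, leaving no covariation between the forward $dB$-integrals and the backward $d\W$-integrals. Finally, the hypothesis that one of $K^1,K^2$ be continuous is used exactly here, to force $\sum_{s\le t}\Delta K^1_s\,\Delta K^2_s=0$ and discard any contribution of simultaneous jumps.

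All the remaining (first-order) pieces recombine cleanly. Collecting the $dB$-integrands across the nine blocks gives $(M^1+V^1+N^1)\beta^2+(M^2+V^2+N^2)\beta^1=X^1\beta^2+X^2\beta^1$; the $d\W$-integrands recombine to $X^1\gamma^2+X^2\gamma^1$; the absolutely continuous drifts recombine to $\alpha^1X^2+\alpha^2X^1$; and, using that $M^i,N^i$ are continuous so that $X^i_{s^-}=V^i_{s^-}+M^i_s+N^i_s$, the finite-variation terms recombine to $\int_0^tX^1_{s^-}dK^2_s+\int_0^tX^2_{s^-}dK^1_s$. Together with the two bracket terms above, this is exactly the announced identity.

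The only genuinely non-routine points are the two facts just isolated: the minus sign in $-\gamma^1_s\cdot\gamma^2_s$, stemming from the right-endpoint convention of the backward integral, and the identity $\langle B,W\rangle=0$, which makes all forward--backward cross brackets vanish. Everything else is bookkeeping, and the convergence of the approximating Riemann sums (left endpoints for $dB$, right endpoints for $d\W$) to the respective integrals is standard given the $\G$-progressive measurability and the integrability of the coefficients. Being entirely parallel to Lemma 1.3 in \cite{pp1994}, the detailed computation may indeed be omitted.
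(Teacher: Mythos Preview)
Your proposal is correct and follows exactly the standard route the paper has in mind: the paper omits the proof, citing Lemma~1.3 of \cite{pp1994}, and your argument---decompose into finite-variation part, forward $dB$-integral, backward $d\W$-integral, apply the forward and backward product rules separately, and use $[B,W]=0$ to kill the mixed brackets---is precisely that proof. The two delicate points (the minus sign from the right-endpoint convention, and the vanishing cross-variation, here justified via the product structure $d\P=d\P^{\alpha,\omega^W}\,d\P^0_W$) are correctly identified, and the use of continuity of one $K^i$ to eliminate $\sum\Delta K^1_s\Delta K^2_s$ is exactly the reason for that hypothesis.
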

\noindent We now give a generalized version of It\^o--Ventzell formula that combines the generalized It\^o formula of Pardoux and Peng \cite{pp1994} and the It\^o--Ventzell formula of Ocone and Pardoux \cite{OP89}.
\begin{Lemma}\label{ItoVentzellLemma}$($Generalized It\^o--Ventzell formula$)$

\vspace{0.5em}
\noindent Suppose that $F\in C^{0,2}(\F,[0,T]\times\R^k)$ is a semimartingale with spatial parameter $x\in\R^k$:
\begin{align*}
\begin{split}
F(t,x)= F(0,t)& + \Int_0^t G(s,x)ds +\Int_0^t H(s,x)\cdot  dB_s+ \Int_0^t K(s,x) \cdot d\W_s, \quad t\in[0,T], 
\end{split}
\end{align*}
where $G\in C^{0,2}(\F^B,[0,T]\times\R^k)$, $H\in C^{0,2}(\F^B,[0,T]\times\R^k;\R^d)$ and $K\in C^{0,2}(\F^W,[0,T]\times\R^k;\R^l)$. Let $\phi\in C(\F,[0,T];\R^k)$ be a process of the form 
$$\phi_t= \phi_0 + A_t + \Int_0^t\gamma_s \cdot dB_s +\Int_0^t \delta_s \cdot d\W_s, \quad t\in[0,T],$$
where $\gamma\in \H^2_{k\times d}$, $\delta\in \H^2_{k\times l}$ and $A$ is a continuous $\F$-adapted process with paths of locally bounded variation. Then, $\P$-almost surely, it holds for all $0\leq t\leq T$ that
\begin{align} \label{ItoVentzell}
\nonumber F(t,\phi_t)= &\ F(0,x) + \Int_0^t G(s,\phi_s)ds +\Int_0^t H(s,\phi_s) \cdot dB_s+ \Int_0^t K(s,\phi_s) \cdot d\W_s\\
\nonumber &+ \Int_0^t D_x F(s,\phi_s)dA_s + \Int_0^t D_x F(s,\phi_s)\gamma_s \cdot dB_s + \Int_0^t D_x F(s,\phi_s)\delta_s \cdot d\W_s\\
\nonumber & +\Frac{1}{2}  \Int_0^t {\rm Tr}(D_{xx} F(s,\phi_s)\gamma_s \gamma_s^\top) ds - \Frac{1}{2}  \Int_0^t {\rm Tr}[D_{xx} F(s,\phi_s)\delta_s \delta_s^\top] ds\\
& +\Int_0^t {\rm Tr}(D_{x} H(s,\phi_s)\gamma_s^\top) ds - \Int_0^t {\rm Tr}[D_{x} F(s,\phi_s) \delta_s^\top] ds.
\end{align} 
\end{Lemma}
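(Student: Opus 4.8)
The plan is to prove this by combining the two building blocks quoted in the statement: the generalized It\^o formula of Pardoux--Peng \cite{pp1994} (whose product-rule case is Lemma \ref{lemma:ito}), which governs the mixed forward/backward calculus in $B$ and $\W$, and the It\^o--Ventzell formula of Ocone--Pardoux \cite{OP89}, which governs the composition of a semimartingale random field with a semimartingale. Since no $\widehat a$ appears in the statement, I read $B$ as a standard Brownian motion under the working measure $\P$ (the general case following by inserting $\widehat a$ in the obvious places). The backbone of the argument is a time discretization. Fixing a partition $0=t_0<\cdots<t_n=t$, I would write the telescoping sum $F(t,\phi_t)-F(0,\phi_0)=\Sum_i[F(t_{i+1},\phi_{t_{i+1}})-F(t_i,\phi_{t_{i+1}})]+\Sum_i[F(t_i,\phi_{t_{i+1}})-F(t_i,\phi_{t_i})]$, separating a \emph{time increment} (spatial variable frozen at the right endpoint) from a \emph{space increment} (time frozen at the left endpoint).

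For the space increments I would freeze time at $t_i$ and apply the doubly stochastic It\^o formula of Lemma \ref{lemma:ito} (in its $C^2$ version from \cite{pp1994}) to the $C^2$ map $x\mapsto F(t_i,x)$ composed with $\phi$ on $[t_i,t_{i+1}]$. This produces the first--order terms $\Int_0^t D_xF(s,\phi_s)dA_s+\Int_0^t D_xF(s,\phi_s)\gamma_s\cdot dB_s+\Int_0^t D_xF(s,\phi_s)\delta_s\cdot d\W_s$ together with the second--order terms; because $\phi$ carries a forward $B$--integral and a backward $\W$--integral, the quadratic variation of $\gamma$ enters with a $+\tfrac12$ and that of $\delta$ with a $-\tfrac12$, yielding exactly $\tfrac12\Int_0^t{\rm Tr}(D_{xx}F\gamma_s\gamma_s^\top)ds-\tfrac12\Int_0^t{\rm Tr}[D_{xx}F\delta_s\delta_s^\top]ds$.

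For the time increments I would use the semimartingale representation of $F$ at the fixed point $x=\phi_{t_{i+1}}$, namely $F(t_{i+1},\phi_{t_{i+1}})-F(t_i,\phi_{t_{i+1}})=\Int_{t_i}^{t_{i+1}}G(s,\phi_{t_{i+1}})ds+\Int_{t_i}^{t_{i+1}}H(s,\phi_{t_{i+1}})\cdot dB_s+\Int_{t_i}^{t_{i+1}}K(s,\phi_{t_{i+1}})\cdot d\W_s$. Replacing $\phi_{t_{i+1}}$ by $\phi_s$ in the integrands recovers the first line $\Int_0^t G(s,\phi_s)ds+\Int_0^t H(s,\phi_s)\cdot dB_s+\Int_0^t K(s,\phi_s)\cdot d\W_s$; the discrepancy, expanded to first order as $D_xH(s,\phi_s)(\phi_{t_{i+1}}-\phi_s)$ and $D_xK(s,\phi_s)(\phi_{t_{i+1}}-\phi_s)$, generates the covariations between the martingale parts of $F$ and of $\phi$. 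The forward $B$--covariation contributes $\Int_0^t{\rm Tr}(D_xH(s,\phi_s)\gamma_s^\top)ds$, while the backward $\W$--covariation contributes the same expression in $K,\delta$ but with the opposite sign, accounting for the final line of \eqref{ItoVentzell}.

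The main obstacle is the rigorous passage to the limit, and in particular the clean identification of these two cross--variation terms with their correct forward/backward signs: the backward integral forces one to evaluate the $\W$--integrands at the \emph{right} endpoint, which is precisely what flips the sign relative to the forward case and must be tracked uniformly across the partition. I would control all error terms using the $C^{0,2}$ regularity of $F,G,H,K$ (uniform continuity of the spatial derivatives on compacts), the $\H^2$ integrability of $\gamma,\delta$, and the locally bounded variation of $A$. A preliminary localization, stopping when $\|\phi\|$ or the relevant spatial sup--norms exceed a level $N$, reduces matters to the bounded case; there the $L^2$--continuity of the forward and backward stochastic integrals, together with dominated convergence, lets the mesh tend to zero and yields \eqref{ItoVentzell}, after which one removes the localization by letting $N\to\infty$.
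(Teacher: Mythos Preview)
The paper does not actually prove this lemma: it is stated in the appendix without proof, preceded only by the remark that it ``combines the generalized It\^o formula of Pardoux and Peng \cite{pp1994} and the It\^o--Ventzell formula of Ocone and Pardoux \cite{OP89}.'' Your discretization strategy---telescoping $F(t,\phi_t)-F(0,\phi_0)$ into a sum of time increments at frozen spatial point and space increments at frozen time, then applying respectively the semimartingale representation of $F$ and the doubly stochastic It\^o formula of \cite{pp1994}---is exactly the standard route for It\^o--Ventzell formulas and is fully in line with that one-sentence description, so there is nothing to contrast.

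One substantive observation your analysis implicitly makes: the cross--variation between the $\W$--part of $F$ and the $\W$--part of $\phi$ should produce a term in $D_xK$ paired with $\delta$, with a minus sign coming from the backward integration. The printed formula \eqref{ItoVentzell} has $D_xF$ in that last integral, which by your argument (and by symmetry with the $D_xH\,\gamma^\top$ term) is almost certainly a typo for $D_xK$. Your sketch therefore not only supplies a proof but also pinpoints a misprint in the statement.
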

\subsection{Proof of Lemma \ref{BDSDEshift}}
\label{proof of lemmashift} 
We divide the proof in two steps.

\vspace{0.5em}
{\it Step 1:} We start by showing the result in the case where $F$ and $g$ do not depend on $(y,z)$. In this case, we can solve directly the BDSDEs to find that $\text{for }\P-a.e. \ (\omega^B,\omega^W)\in\Omega$
\begin{equation}\label{eq:shift1}
y_t^{\P}(\omega^B,\omega^W)=\E^{\P}\left[\left.\xi+\int_t^T\widehat F_sds+\int_t^Tg_s\cdot d\W_s\right|\mathcal G_t\right](\omega^B,\omega^W).
\end{equation}
Then, since $\xi$ is actually $\Fc_T^B-$measurable, we deduce immediately, using the definition of the r.c.p.d. that for $\P_0^W-$a.e. $\omega^W\in\Omega^W$
$$\E^{\P}\left[\left.\xi\right|\mathcal G_t\right](\omega^B,\omega^W)=\E^{\P_B(\omega^W)}\left[\left.\xi\right|\mathcal F_t^B\right](\omega^B)=\E^{\P_B^{t,\omega^B}(\omega^W)}\left[\xi^{t,\omega^B}\right].$$
Next, we know from the results of Stricker and Yor \cite{SY78} that we can define a measurable map from $(\Omega^W\times[0,T],\Fc_T\otimes\Bc([0,T]))$ to $(\R,\Bc(\R))$ which coincides $\P^0_W\otimes dt-$a.e. with the conditional expectation of $g_s$, under $\P^B(\cdot;\omega^W)$ (remember that this is a stochastic kernel, and thus measurable), with respect to the $\sigma-$algebra $\Fc_t^B$. For notational simplicity, we still denote this map as
$$(\omega^W,s)\longmapsto \E^{\P_B(\cdot;\omega^W)}\left[\left.g_s(\cdot,\omega^W)\right|\Fc_t^B\right].$$
In other words, the above map does indeed define a stochastic process. That being said, we claim that for $\P-$a.e. $\omega\in\Omega$
\begin{align}\label{eq:shift2}
\nonumber\E^{\P}\left[\left.\int_t^Tg_s\cdot d\W_s\right|\mathcal G_t\right](\omega^B,\omega^W) &=\left(\int_t^T\E^{\P_B(\cdot;\cdot)}\left[\left.g_s\right|\Fc_t^B\right](\omega^B,\cdot)\cdot d\W_s\right)(\omega^W)\\
&=\left(\int_t^T\E^{\P_B^{t,\omega^B}(\cdot)}\left[g_s^{t,\omega^B}\right](\cdot)\cdot d\W_s\right)(\omega^W).
\end{align}
To prove the claim, let us first show it in the case where $g$ is a simple process with the following decomposition
$$g_t(\omega^B,\omega^W)=\sum_{i=0}^{n-1}g_{t_i}(\omega^B,\omega^W){\bf 1}_{(t_i,t_{i+1}]}(t).$$
Then, we have by definition of backward stochastic integrals, for $\P$-a.e. $(\omega^B,\omega^W)\in\Omega$
\begin{align*}
\E^{\P}\left[\left.\int_t^Tg_s\cdot d\W_s\right|\mathcal G_t\right](\omega^B,\omega^W)&=\sum_{i=0}^{n-1}\E^{\P}\left[\left.g_{t_{i+1}}\cdot \left(W_{t_{i+1}\wedge t}-W_{t_i\wedge t}\right)\right|\mathcal G_t\right]\left(\omega^B,\omega^W\right)\\
&=\sum_{i=0}^{n-1}\E^{\P}\left[\left.g_{t_{i+1}}\right|\mathcal G_t\right]\left(\omega^B,\omega^W\right)\cdot \left(W_{t_{i+1}\wedge t}-W_{t_i\wedge t}\right)(\omega^W).
\end{align*}
Notice next that for $\P-a.e.$ $(\omega^B,\omega^W)\in\Omega$
$$\E^{\P}\left[\left.g_{t_{i+1}}\right|\mathcal G_t\right]\left(\omega^B,\omega^W\right)=\E^{\P_B(\cdot;\omega^W)}\left[\left.g_{t_{i+1}}\left(\cdot,\omega^W\right)\right|\Fc_t^B\right]\left(\omega^B\right).$$
Indeed, for any $X$ which is $\mathcal G_t-$measurable, we have
\begin{align*}
&\int_{\Omega}\E^{\P_B(\cdot;\omega^W)}\left[\left.g_{t_{i+1}}\left(\cdot,\omega^W\right)\right|\Fc_t^B\right]\left(\omega^B\right)X(\omega^B,\omega^W)d\P_B(\omega^B;\omega^W)d\P^0_W(\omega^W)\\
&=\int_{\Omega^W}\left(\int_{\Omega^B}\E^{\P_B(\cdot;\omega^W)}\left[\left.g_{t_{i+1}}\left(\cdot,\omega^W\right)\right|\Fc_t^B\right]\left(\omega^B\right)X(\omega^B,\omega^W)d\P_B(\omega^B;\omega^W)\right)d\P^0_W(\omega^W)\\
&=\int_{\Omega^W}\left(\int_{\Omega^B}g_{t_{i+1}}\left(\omega^B,\omega^W\right)X(\omega^B,\omega^W)d\P_B(\omega^B;\omega^W)\right)d\P^0_W(\omega^W)\\
&=\int_{\Omega}g_{t_{i+1}}\left(\omega^B,\omega^W\right)X(\omega^B,\omega^W)d\P(\omega^B,\omega^W),
\end{align*}
where we have used the fact that since for every $\omega^W\in\Omega^W$, $\omega^B\longmapsto X(\omega^B,\omega^W)$ is $\Fc_t^B-$measurable, we have by definition of the conditional expectation that
\begin{align*}
&\int_{\Omega^B}\E^{\P_B(\cdot;\omega^W)}\left[\left.g_{t_{i+1}}\left(\cdot,\omega^W\right)\right|\Fc_t^B\right]\left(\omega^B\right)X(\omega^B,\omega^W)d\P_B(\omega^B;\omega^W)\\
&=\int_{\Omega^B}g_{t_{i+1}}\left(\omega^B,\omega^W\right)X(\omega^B,\omega^W)d\P_B(\omega^B;\omega^W).
\end{align*}
Hence, we deduce finally that
\begin{align*}
\E^{\P}\left[\left.\int_t^Tg_s\cdot d\W_s\right|\mathcal G_t\right](\omega^B,\omega^W)&=\sum_{i=0}^{n-1}\E^{\P_B(\cdot;\omega^W)}\left[\left.g_{t_{i+1}}\left(\cdot,\omega^W\right)\right|\Fc_t^B\right]\left(\omega^B\right)\cdot \left(W_{t_{i+1}\wedge t}-W_{t_i\wedge t}\right)(\omega^W)\\
&=\left(\int_t^T\E^{\P_B(\cdot;\cdot)}\left[\left.g_s\right|\Fc_t^B\right](\omega^B,\cdot)\cdot d\W_s\right)(\omega^W).
\end{align*}
By a simple density argument, we deduce that the same holds for general processes $g$. Next, notice that by definition of r.p.c.d., we have for $\P_0^W-$a.e. $\omega^W\in\Omega^W$
$$\E^{\P_B(\cdot;\omega^W)}\left[\left.g_s\right|\Fc_t^B\right](\omega^B,\omega^W)=\E^{\P_B^{t,\omega^B}(\cdot;\omega^W)}\left[g_s^{t,\omega^B}\right](\omega^W),\ \text{for }\P_B(\cdot;\omega^W)-a.e.\ \omega^B\in\Omega^B.$$
By definition of $\P$, we are exactly saying that the above holds for $\P-$a.e. $\omega\in\Omega$. This finally proves \reff{eq:shift2}. 

\vspace{0.5em}
\noindent Using similar argument, we show that we also have for $\P-$a.e. $\omega\in\Omega$
$$\E^{\P}\left[\left.\int_t^T\widehat F_sds\right|\mathcal G_t\right](\omega^B,\omega^W)=\int_t^T\E^{\P_B^{t,\omega^B}(\cdot;\omega^W)}\left[\widehat F_s^{t,\omega^B}\right](\omega^W)ds.$$
To sum up, we have obtained that for $\P-$a.e. $\omega\in\Omega$
\begin{align*}
y_t^{\P}(\omega^B,\omega^W)=&\ \E^{\P_B^{t,\omega^B}(\cdot;\omega^W)}\left[\xi^{t,\omega^B}\right]+\left(\int_t^T\E^{\P_B(\cdot;\cdot)}\left[\left.g_s\right|\Fc_t^B\right](\omega^B,\cdot)\cdot d\W_s\right)(\omega^W)\\
&+\int_t^T\E^{\P_B^{t,\omega^B}(\cdot;\omega^W)}\left[\widehat F_s^{t,\omega^B}\right](\omega^W)ds.
\end{align*}
But, we also have (remember that by the Blumenthal $0-1$ law $y_t^{\P^{t,\omega^B}_B(\cdot)\otimes\P^0_W,t,\omega^B}$ only depends on $\omega^W$) for any $\omega^B\in\Omega^B$ and for $\P_0^W-$a.e. $\omega^W\in\Omega^W$
\begin{align*}
y_t^{\P^{t,\omega^B}_B(\cdot)\otimes\P^0_W,t,\omega^B}(\omega^W)&=\E^{\P_B^{t,\omega^B}(\cdot)\otimes\P^0_W}\left[\left.\xi^{t,\omega^B}+\int_t^T\widehat F^{t,\omega^B}_sds+\int_t^Tg_s^{t,\omega^B}\cdot d\W_s\right|\mathcal G^t_t\right](\omega^W)\\
&=\E^{\P_B^{t,\omega^B}(\cdot)\otimes\P^0_W}\left[\left.\xi^{t,\omega^B}+\int_t^T\widehat F^{t,\omega^B}_sds+\int_t^Tg_s^{t,\omega^B}\cdot d\W_s\right|\mathcal F^W_{t,T}\right](\omega^W).
\end{align*}
Using the same arguments as above, we obtain
\begin{align}\label{eq:tructruc}
\nonumber y_t^{\P^{t,\omega^B}_B(\cdot)\otimes\P^0_W,t,\omega^B}(\omega^W)=&\ \E^{\P_B^{t,\omega^B}(\cdot,\omega^W)}\left[\xi^{t,\omega^B}\right]+\int_t^T\E^{\P_B^{t,\omega^B}(\cdot,\omega^W)}\left[\widehat F^{t,\omega^B}_s\right](\omega^W)ds\\
&+\left(\int_t^T\E^{\P_B^{t,\omega^B}(\cdot,\cdot)}\left[g_s^{t,\omega^B}\right]\cdot d\W_s\right)(\omega^W),
\end{align}
which proves the desired result.

\vspace{0.3em}
{\it Step 2: } Since we are in a Lipschitz setting, solutions to BDSDEs can be constructed via Picard iterations. Hence, using Step 1, the results holds at each step of the iteration and therefore also when passing to the limit. We emphasize that this step crucially relies on \reff{eq:aa}.
\ep
\subsection{Proof of Lemma \ref{Lemmaregularity}}
For each $\P\in\Pc$, let $(\overline{\Yc}^\P(T,\xi), \overline{\Zc}^\P(T,\xi))$ be the solution of the BDSDE with generators $\widehat{F}$ and $g$, and terminal condition $\xi$ at time $T$. We define $\widetilde{V}^\P :=  V- \overline{\Yc}^\P(T,\xi)$. Then, $\widetilde{V}^\P\geq 0, \ \P - a.s.$
For any $0\leq t_1\leq t_2\leq T$, let $(y^{\P,t_2},z^{\P,t_2}):= (\Yc^\P(t_2,V_{t_2}), \Zc^\P(t_2,V_{t_2}))$. Note that $$\Yc_{t_1}^\P(t_2,V_{t_2})(\omega)= \Yc_{t_1}^{\P,t_1,\omega}(t_2,V_{t_2}^{t_1,\omega}), \ \P - a.s.$$ 
Then by the dynamic programming principle (Theorem \ref{dynam prog}) we get $$V_{t_1}\geq y^{\P,t_2}_{t_1}, \ \P - a.s.$$
Denote $ \widetilde{y}^{\P,t_2}_t:= y^{\P,t_2}_{t}- \overline{\Yc}^\P_t  ,~ \widetilde{z}^{\P,t_2}_t:= \widehat{a}_t^{-1/2}(z^{\P,t_2}_{t}- \overline{\Zc}^\P_t).$
Then $(\widetilde{y}^{\P,t_2},\widetilde{z}^{\P,t_2})$ is solution of the following BDSDE on $[0,t_2]$
$$\widetilde{y}^{\P,t_2}_t= \widetilde{V}^\P_t+\Int_t^{t_2} f_s^\P(\widetilde{y}^{\P,t_2}_s,\widetilde{z}^{\P,t_2}_s)ds + \Int_t^{t_2} \widehat{g}_s^\P(\widetilde{y}^{\P,t_2}_s,\widetilde{z}^{\P,t_2}_s)\cdot d\W_s -  \Int_t^{t_2} \widetilde{z}^{\P,t_2}_s\cdot \widehat a^{1/2}_s dB_s,$$
where 
\begin{align*}
f_t^\P(\omega, y,z) &:= \widehat{F}_t(\omega,y+ \overline{\Yc}^\P_t(\omega),\widehat{a}^{1/2}(\omega)z+ \overline{\Zc}^\P_t(\omega))- \widehat{F}_t(\omega,\overline{\Yc}^\P_t(\omega),\overline{\Zc}^\P_t(\omega))\\
\widehat{g}_t^\P(\omega, y,z) &:= g_t(\omega,y+ \overline{\Yc}^\P_t(\omega),\widehat{a}^{1/2}(\omega)z+ \overline{\Zc}^\P_t(\omega))- g_t(\omega,\overline{\Yc}^\P_t(\omega),\overline{\Zc}^\P_t(\omega)).
\end{align*}
Then $\widetilde{V}^\P_{t_1}\geq \widetilde{y}^{\P,t_2}_{t_1}$. Therefore, $\widetilde{V}^\P$ is a positive weak doubly $f^\P-$super--martingale under $\P$ by Definition \ref{doublymartingale} (given below in the Appendix).\\
Now, we assume that the coefficient ${g}$ does not depend in $ (y,z)$,   then obviously we have  that   $\bar{V}^\P_{t_1} \geq \bar{y}^{\P,t_2}_{t_1}$ where
$$\bar{y}_t :=  \widetilde{y}_t +   \Int_0^{t} \widehat{g}_s \cdot d\W_s \text{ and }\bar{V}_t :=  \widetilde{V}_t +   \Int_0^{t} \widehat{g}_s \cdot d\W_s.$$  
Thanks to this change of variable, we have that $(\bar{y}^{\P,t_2},\bar{z}^{\P,t_2})$ solves the following standard BSDE on $[0,t_2]$
$$\bar{y}^{\P,t_2}_t= \bar{V}^\P_t+\Int_t^{t_2} \bar{f}_s^\P(\widetilde{y}^{\P,t_2}_s,\widetilde{z}^{\P,t_2}_s)ds -  \Int_t^{t_2} \bar{z}^{\P,t_2}_s\cdot \widehat a^{1/2}_s dB_s,$$
where 
$$\bar{f}_t^\P(\omega, y,z) := f_t(\omega,y+  \Int_0^{t} \widehat{g}_s \cdot d\W_s ,\widehat{a}^{1/2}(\omega)z ).$$
Now applying the down--crossing inequality for $f$-martingale Theorem 6 in \cite{CP2000} combined  with the result concerning the classical down--crossing inequality for non necessarily positive  super--martingales in \cite{Doob84} (chapter III,  p. 446), we deduce that for $\P- a.e.$ $\omega$, the limit $\underset{r\in\Q\cap(t,T],r\downarrow t}{\Lim} \bar{V}^\P _r$, and consequently the limit $\underset{r\in\Q\cap(t,T],r\downarrow t}{\Lim} \widetilde{V}^\P _r$ exists for all $t\in [0,T]$. Note that $y^\P$ is continuous, $\P-a.s.$,  and obviously $\bar{y}^\P$ is continuous, $\P-a.s$. Therefore, we get that the $\overline{\Lim}$ in the definition of $V^+$ is in fact a true limit, which implies that
$$V_t^{+}= \underset{r\in\Q\cap(t,T],r\downarrow t}{{\Lim}}V_r, \ \Pc -q.s.,$$ and thus
 $V^{+}$ is c\`adl\`ag
$\Pc -q.s.$
Finally,  we can prove the general case when $g$ depend in $ (y,z)$ using classically the Banach  fixed point theorem.
\ep
\section{Doubly $f-$supersolution and martingales}
In this section, we extend some of the results of Peng \cite{peng:99} concerning $f-$super--solutions of BSDEs to the case of BDSDEs. In the following, we fix a probability measure $\P\in\Pc$ and work implicitly with $\overline{\F^B}^\P$ and $\F^W$.
We introduce the following spaces for a fixed probability $\P$.
\begin{itemize}
\item[-] $L^{2}(\P)$ denotes the space of all $\Fc_T-$measurable scalar r.v. $\xi$ with
$\|\xi\|^2_{L^{2}}:= \E^{\P}[|\xi|^2] < +\infty .$
\item[-] $\D^{2}(\P)$ denotes the space of $\R-$valued processes $Y$, s.t. $Y_t$ is $\Fc_{t}$ measurable for every $t\in[0,T]$, with
$\text{ c\`adl\`ag paths, and } \ \|Y\|^2_{\D^{2}(\P)}:= \E^{\P}\left[\underset{0\leq t\leq T}{\Sup}\,|Y_t|^2 \right]< +\infty .$
\item[-] $\H^{2}(\P)$ denotes the space of all $\R^d-$valued processes $Z$  s.t. $Z_t$ is $\Fc_{t}$ measurable for a.e. $t\in[0,T]$, with
$$\|Z\|^2_{\H^{2}(\P)}:=  \E^{\P}\left[\left(\Int_0^T \|\widehat{a}^{1/2}_tZ_t\|^2 dt\right)\right]< +\infty .$$
\end{itemize}

Let us be given the following objects
\begin{description}
 \item[(i)] a terminal condition $\xi$ which is $\Fc_T-$measurable and in $L^2(\P)$.
 \item[(ii)] two maps $f:\Omega\times\R\times\R^d\rightarrow\R ,~ g:\Omega\times\R\times\R^d\rightarrow\R^l$ verifying
 
 \vspace{0.5em}
$\bullet$ $\E\left[\Int_0^T |f(t,0,0)|^2dt\right] < +\infty$, and $\E\left[\Int_0^T\|g(t,0,0)\|^2dt\right] < +\infty.$

\vspace{0.5em}
$\bullet$ There exist $(\mu,\alpha)\in\R_+^*\times (0,1)$ s.t. for any $(\omega,t,y_1,y_2,z_1,z_2)\in\Omega\times[0,T]\times\R^2\times(\R^d)^2$
\begin{align*}
|f(t,\omega, y_1,z_1)-f(t,\omega,y_2,z_2)| &\leq \mu\big(|y_1-y_2|+\|z_1-z_2\|\big),\\
\|g(t,\omega,y_1,z_1)-g(t,\omega,y_2,z_2)\|^2 &\leq c|y_1-y_2|^2+\alpha\|z_1-z_2\|^2.
\end{align*}
\item[(iii)] a real--valued c\`adl\`ag, progressively measurable process $\{V_t, 0\leq t\leq T\}$ with
$$\E\left[\underset{0\leq t\leq T}{\Sup}|V_t|^2\right] < +\infty.$$
\end{description}
We want to study the following problem: to find a pair of processes $(y,z)\in\D^2(\P) \times\H^2(\P)$  satisfying 
\begin{align}
y_t =\xi_T +\Int_t^T f_s(y_s,z_s)ds  +\Int_t^T g_s(y_s,z_s)\cdot d\W_s +V_T-V_t- \Int_t^T z_s\cdot dB_s ~,~\P-a.s.\label{BDSDE f-super}
\end{align}
We have the following existence and uniqueness theorem
\begin{Proposition}
Under the above hypothesis there exists a unique pair of processes $(y,z)\in\D^2(\P) \times\H^2(\P)$ solution of BDSDE \eqref{BDSDE f-super}. 
\end{Proposition}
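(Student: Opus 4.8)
The plan is to remove the additive term $V$ by absorption, reducing \eqref{BDSDE f-super} to a standard Lipschitz BDSDE, and then to solve the latter by a contraction argument whose base case is the explicitly solvable linear equation. Setting $\bar y_t := y_t + V_t$, equation \eqref{BDSDE f-super} is equivalent to
\begin{align*}
\bar y_t = (\xi + V_T) + \int_t^T \bar f_s(\bar y_s, z_s)\,ds + \int_t^T \bar g_s(\bar y_s, z_s)\cdot d\W_s - \int_t^T z_s\cdot dB_s,\ \P\text{-a.s.},
\end{align*}
where $\bar f_s(y,z):=f_s(y-V_s,z)$ and $\bar g_s(y,z):=g_s(y-V_s,z)$. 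Because $V$ is c\`adl\`ag with $\E^\P[\sup_{0\le t\le T}|V_t|^2]<\infty$, the terminal datum $\xi+V_T$ still belongs to $L^2(\P)$, the shifted generators $\bar f,\bar g$ keep the Lipschitz, resp. contraction, constants $(\mu,\alpha)$ of $f,g$ as well as the integrability of $\bar f_\cdot(0,0),\bar g_\cdot(0,0)$, and $\bar y$ is a \emph{continuous} semimartingale, so that Lemma \ref{lemma:ito} applies with no bounded-variation term. It therefore suffices to solve the displayed BDSDE.

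When $\bar f,\bar g$ do not depend on $(y,z)$, that is when they are given processes $\phi,\psi$, the equation is solved explicitly: the square-integrable $(\P,\G)$-martingale
\begin{align*}
N_t := \E^\P\Big[\,(\xi + V_T) + \int_0^T \phi_s\,ds + \int_0^T \psi_s\cdot d\W_s \,\Big|\, \Gc_t\Big]
\end{align*}
is represented as $N_0+\int_0^t z_s\cdot dB_s$ through the martingale representation property valid for every $\P\in\overline{\Pc}_S$, and $\bar y$ is read off from $N$; this is precisely Step 1 of the proof of Lemma \ref{BDSDEshift}, see \eqref{eq:shift1}. The one point that does not transfer verbatim from \cite{pp1994} is that $\bar y_t$, a priori merely $\Gc_t$-measurable, is in fact $\Fc_t$-measurable; this is settled by Lemma \ref{BDSDEshift} and the shifted representation \eqref{eq:tructruc}. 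The $\D^2(\P)\times\H^2(\P)$ estimate then follows from Lemma \ref{lemma:ito} applied to $|\bar y|^2$ together with the Burkholder--Davis--Gundy inequalities.

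For the general case I would set up the map $\Phi:(U,V')\mapsto(\bar y,z)$, where $(\bar y,z)$ solves the linear equation above with frozen inputs $\bar f_s(U_s,V'_s),\bar g_s(U_s,V'_s)$. For two inputs the difference $\delta\bar y$ solves a linear BDSDE with zero terminal value and no $V$-term, and Lemma \ref{lemma:ito} applied to $e^{\beta s}|\delta\bar y_s|^2$ yields, after taking expectations (the stochastic integrals being true martingales),
\begin{align*}
\E^\P\int_0^T e^{\beta s}\big(\beta|\delta\bar y_s|^2 + \|\widehat a_s^{1/2}\delta z_s\|^2\big)\,ds \le \E^\P\int_0^T e^{\beta s}\big(\|\delta\bar g_s\|^2 + 2\,\delta\bar f_s\,\delta\bar y_s\big)\,ds.
\end{align*}
The decisive feature of the doubly stochastic It\^o formula is the sign carried by the backward $\W$-integral: the term $\|\delta\bar g_s\|^2$ appears on the right with the favourable constant through $\|\delta\bar g_s\|^2\le c|\delta U_s|^2+\alpha\|\delta V'_s\|^2$, and Assumption \ref{ass}(iv) turns this into $\alpha\|\delta V'_s\|^2\le(1-\lambda)\|\widehat a_s^{1/2}\delta V'_s\|^2$. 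Bounding the cross term by Young's inequality and using that $\widehat a$ is bounded below under $\P\in\Pc$, one checks that in the norm $\|(y,z)\|_*^2:=\E^\P\int_0^T e^{\beta s}\big(\tfrac\beta2|y_s|^2+\|\widehat a_s^{1/2}z_s\|^2\big)\,ds$ the map $\Phi$ is a contraction with constant in $(1-\lambda,1)$ once $\beta$ is chosen large enough: the large weight on the $y$-component absorbs the Lipschitz constants, while the strict contraction of the $z$-component is exactly $1-\lambda<1$. Banach's fixed point theorem then gives existence, and uniqueness, as well as the membership $\bar y\in\D^2(\P)$ (via BDG), follow from the same estimate applied to the difference of two genuine solutions.

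The main obstacle is not the contraction, which is routine once the weighted norm and Assumption \ref{ass}(iv) are in hand, but the measurability of the constructed solution: the process obtained through a $\Gc_t$-conditional expectation must be shown to be $\Fc_t=\Fc_t^B\vee\Fc_{t,T}^W$-measurable rather than only $\Gc_t$-measurable. This is precisely the single step of \cite{pp1994} that breaks down when $B$ and $W$ are not independent, and it is recovered only through Lemma \ref{BDSDEshift} and the explicit shifted identity \eqref{eq:tructruc}. The absorption of $V$ and the careful bookkeeping of the signs in Lemma \ref{lemma:ito} for the backward integral are the secondary technical points.
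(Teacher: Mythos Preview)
Your proposal is correct and follows the same route as the paper: the key step is the substitution $\bar y_t:=y_t+V_t$, which absorbs the bounded-variation term and reduces \eqref{BDSDE f-super} to a standard Lipschitz BDSDE with shifted generators $\bar f_s(y,z)=f_s(y-V_s,z)$, $\bar g_s(y,z)=g_s(y-V_s,z)$ and terminal datum $\xi+V_T$. The paper's proof stops there and simply invokes \cite{pp1994} for the reduced equation, whereas you go on to spell out the contraction argument and the $\Fc_t$-measurability issue; both of these points are handled by the paper elsewhere (the measurability fix via Lemma \ref{BDSDEshift} and \eqref{eq:tructruc} is exactly what the paper notes just after \eqref{BDSDE}), so your elaboration is consistent with, rather than different from, the paper's approach.
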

\begin{proof}
In the case where $V\equiv0$, the proof can be found in \cite{pp1994}. Otherwise, we can make the change of variable $\overline{y}_t:= y_t+V_t$ and treat the equivalent BDSDE
\begin{align}
\overline{y}_t =\xi_T +V_T+\Int_t^T f_s(\overline{y}_s-V_s,z_s)ds  +\Int_t^T g_s(\overline{y}_s-V_s,z_s)\cdot d\W_s - \Int_t^T z_s\cdot dB.
\end{align}
\ep
\end{proof}
\noindent We also have a comparison theorem in this context
\begin{Proposition}
Let $\xi_1$ and $\xi_2\in L^2(\P), V^i, i=1,2$ be two adapted c\`adl\`ag processes and $f_s^i(y,z), g_s^i(y,z)$ four functions verifying the above assumption. Let $(y^i,z^i)\in\D^2(\P) \times\H^2(\P)$, $i=1,2$ be the solution of the following BDSDEs
\begin{align*}
y_t^i =\xi_T^i +\Int_t^T f_s^i(y_s^i,z_s^i)ds  +\Int_t^T g_s(y_s^i,z_s^i)\cdot d\W_s +V_T^i-V_t^i- \Int_t^T z_s^i\cdot dB_s ,~\P-a.s,
\end{align*}
respectively. If we have $\P -a.s.$ that $\xi_1\geq\xi_2 , V^1-V^2$ is non decreasing, and $f_s^1(y_s^1,z_s^1)\geq f_s^2(y_s^1,z_s^1)$ then it holds that for all $t\in [0,T]$ $$y_t^1\geq y_t^2, \ \P -a.s.$$
\end{Proposition}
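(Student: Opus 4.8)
The plan is to establish the inequality by linearising the difference of the two equations and then running an Itô computation on the square of the negative part of $\delta y:=y^1-y^2$, which yields the pointwise a.s. comparison directly and sidesteps any awkward conditioning against the mixed filtration. When $V^1\equiv V^2\equiv 0$ this is exactly the comparison theorem for BDSDEs of \cite{SGL05} (and is implicit in the estimates of \cite{pp1994}), so the only structurally new ingredient is the pair of bounded-variation terms $V^i$, which I will show enter with the favourable sign. First I set $\delta z:=z^1-z^2$, $\delta\xi:=\xi_1-\xi_2\ge 0$ and $\delta V:=V^1-V^2$, non-decreasing by hypothesis. Writing $f_s^1(y_s^1,z_s^1)-f_s^2(y_s^2,z_s^2)=\varphi_s+a_s\delta y_s+b_s\cdot\delta z_s$, where $\varphi_s:=f_s^1(y_s^1,z_s^1)-f_s^2(y_s^1,z_s^1)\ge 0$ and $a,b$ are bounded processes coming from linearising the Lipschitz map $f^2$ along $(\delta y,\delta z)$, and likewise $g_s(y_s^1,z_s^1)-g_s(y_s^2,z_s^2)=\gamma_s\delta y_s+\beta_s\delta z_s$ with $\gamma,\beta$ bounded, the difference solves
\begin{equation*}
\delta y_t=\delta\xi+\Int_t^T\big(a_s\delta y_s+b_s\cdot\delta z_s+\varphi_s\big)ds+\Int_t^T\big(\gamma_s\delta y_s+\beta_s\delta z_s\big)\cdot d\W_s+\delta V_T-\delta V_t-\Int_t^T\delta z_s\cdot dB_s .
\end{equation*}

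Next I would apply the generalised It\^o formula of \cite{pp1994} (the function analogue of Lemma \ref{lemma:ito}) to $(\delta y_t^-)^2$, after a routine localisation. The step I expect to be the main obstacle is the bookkeeping of the quadratic-variation corrections, precisely because of the doubly stochastic structure: the forward integral against $B$ produces $-{\bf 1}_{\{\delta y<0\}}\|\widehat a_s^{1/2}\delta z_s\|^2$, whereas the \emph{backward} integral against $\W$ contributes with the opposite sign (exactly as the $-\gamma^1\cdot\gamma^2$ term in Lemma \ref{lemma:ito} already reveals), namely $+{\bf 1}_{\{\delta y<0\}}\|\gamma_s\delta y_s+\beta_s\delta z_s\|^2$. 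Using the contraction bound $\|\gamma_s\delta y_s+\beta_s\delta z_s\|^2\le c(\delta y_s^-)^2+\alpha\|\delta z_s\|^2$ together with the ellipticity of Assumption \ref{ass}(iv), which gives $\|\widehat a_s^{1/2}\delta z_s\|^2\ge\frac{\alpha}{1-\lambda}\|\delta z_s\|^2\ge\alpha\|\delta z_s\|^2$, these two $\|\delta z\|^2$ contributions combine into a non-positive term; here the strict contraction $\alpha<1$ is precisely what makes the reconciliation possible. The remaining $f$-linear terms $a_s\delta y_s+b_s\cdot\delta z_s$ are controlled in the standard way, either by Young's inequality against the slack $\frac{\alpha\lambda}{1-\lambda}\|\delta z_s\|^2$ or, more robustly, by absorbing them beforehand through the equivalent change of measure encoded in the process $M$ of \eqref{M}. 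Simultaneously the running term $-2\delta y_s^-\varphi_s$ is non-positive since $\varphi\ge 0$, and $-2\Int_t^T\delta y_s^-\,d\delta V_s\le 0$ because $\delta V$ is non-decreasing, so both inhomogeneities point in the right direction.

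Finally I would take $\P$-expectation. Both the forward integral against $B$ and the backward integral against $\W$ have vanishing expectation (the backward one by the very construction of the doubly stochastic integral, after the localisation is removed by dominated convergence using $\delta y\in\D^2(\P)$ and $\delta z\in\H^2(\P)$), which leaves an inequality of the form $\E^\P[(\delta y_t^-)^2]\le C\Int_t^T\E^\P[(\delta y_s^-)^2]\,ds$. Gronwall's lemma then forces $\E^\P[(\delta y_t^-)^2]=0$ for every $t$, that is $y_t^1\ge y_t^2$, $\P$-a.s., for each fixed $t$; the $\P$-a.s. continuity of the paths of $y^1$ and $y^2$ upgrades this to the simultaneous statement $y_t^1\ge y_t^2$ for all $t\in[0,T]$, $\P$-a.s., which is the claim.
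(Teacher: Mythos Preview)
The paper states this proposition without proof, treating it as a routine extension of the BDSDE comparison theorem of \cite{SGL05} (which the paper itself invokes in Step~(i) of the proof of Theorem~\ref{theorep}) to accommodate the extra finite-variation terms $V^i$. Your approach---linearise the difference, apply the generalised It\^o formula of \cite{pp1994} to $(\delta y^-)^2$, exploit the opposite signs of the forward and backward quadratic-variation corrections together with the contraction $\alpha<1$ on $g$ and the ellipticity of Assumption~\ref{ass}(iv) to absorb the $\|\delta z\|^2$ terms, observe that $\varphi\ge0$ and $d\delta V\ge0$ enter with the favourable sign, and close with Gronwall---is precisely the standard argument and is correct.

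One small slip: $y^1$ and $y^2$ are only c\`adl\`ag, not continuous, since the $V^i$ are merely c\`adl\`ag (indeed $y^i_t+V^i_t$ is continuous, so $\Delta y^i_t=-\Delta V^i_t$). The upgrade from ``for each fixed $t$'' to ``for all $t\in[0,T]$ simultaneously'' therefore goes through a countable dense set and right-continuity, not continuity. Relatedly, the It\^o formula you invoke must accommodate the jumps of $\delta V$: the Meyer--It\^o formula for the convex function $x\mapsto(x^-)^2$ does this, and the jump sum it produces is non-negative by convexity, hence can be discarded with the correct sign when you rewrite the identity as an upper bound for $(\delta y_t^-)^2$.
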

\noindent For a given $\G-$stopping time, we now consider the following BDSDE
\begin{align}
y_t =\xi_T +\Int_{t\wedge\tau}^{\tau} f_s(y_s,z_s)ds  +\Int_{t\wedge\tau}^{\tau} g_s(y_s,z_s)\cdot d\W_s +V_{\tau}-V_{t\wedge\tau}- \Int_{t\wedge\tau}^{\tau} z_s\cdot dB_s ~,~\P-a.s.\label{tau BDSDE f-super}
\end{align}
where $\xi\in L^2(\P)$ and $V\in\I^2(\P)$.
\begin{Definition}
If $y$ is a solution of BDSDE of form \reff{tau BDSDE f-super}, the we call $y$ a doubly $f-$super--solution on $[0,\tau]$. If $V\equiv 0$ in $[0,\tau]$, then we call $y$ a doubly $f-$solution.
\end{Definition}
\noindent We now introduce the notion of doubly $f-$(super)martingales.
\begin{Definition}\label{doublymartingale}
\item[$(i)$] A doubly $f-$martingale on $[0,T]$ is a doubly $f-$solution on $[0,T]$.
\item[$(ii)$] A process $(Y_t)$ is a doubly $f-$super--martingale in the strong $($resp. weak$)$ sense if for all stopping time $\tau\leq t$ $($resp. all $t\leq T)$, we have $\E^\P[|Y_{\tau}|^2]< +\infty$ $($resp. $\E^\P[|Y_{t}|^2]< +\infty)$ and if the doubly $f$-solution $(y_s)$ on $[0,\tau]$ $($resp. $[0,t])$ with terminal condition $Y_{\tau}$ $($resp. $Y_{t})$ verifies $y_{\sigma}\leq Y_{\sigma}$ for every stopping time $\sigma\leq \tau$ $($resp. $y_s\leq Y_s$ for every $s\leq t)$.
\end{Definition}
\section{Reflected backward doubly stochastic differential equations}
\label{RBDSDE:section}
In this section, we want to study the problem of a reflected backward doubly stochastic differential equation (RBDSDE for short) with one c\`adl\`ag barrier. This is an extension of the work of Hamad\`ene and Ouknine \cite{HO11} for the standard reflected BSDEs to our case. So in addition to the terminal condition and generators that we used in the previous section, we need
\begin{description}
\item[(iv)] a barrier $\{S_t, 0\leq t\leq T\}$, which is a real-valued c\`adl\`ag $\Fc_t-$measurable process satisfying $S_T\leq\xi$ and
$$\E\left[\underset{0\leq t\leq T}{\Sup}(S_t^+)^2\right] < +\infty.$$
\end{description}
Now we present the definition of the solution of RBDSDEs with one lower barrier.
\begin{Definition}\label{def:rbsde}
We call $(Y,Z,K)$ a solution of the backward doubly stochastic differential equation with one reflecting lower barrier $S(.)$,
terminal condition $\xi$ and coefficients $f$ and $g$, if the following holds:
\begin{itemize}
\item[\rm{(i)}]  $Y\in\D^2(\P),~ Z\in\H^2(\P)$.
 \item[\rm{(ii)}] $Y_t  = \xi +\Int_t^T f(s,Y_s,Z_s)ds +\Int_t^T g(s,Y_s,Z_s)\cdot d\W_s -\Int_t^T Z_s\cdot dB_s +K_T-K_t ,~ 0\leq t\leq T$.
 \item[\rm{(iii)}] $Y_t\geq S_t ~ ,~ 0\leq t\leq T,~ a.s.$
 \item[\rm{(iv)}] If $K^c$ $($resp. $K^d)$ is the continuous $($resp. purely discontinuous$)$ part of $K$, then
 $$\Int_0^T (Y_{s}-S_{s})dK^c_s = 0 ,~ a.s.\ \text{and }\forall t\leq T,\ \Delta K_t^d = (S_{t^-}-Y_t)^+\mathbf{1}_{[Y_{t^-}=S_{t^-}]}.$$
\end{itemize}
\label{RBDSDE}
\end{Definition}
\begin{Remark}
The condition \rm{(iv)} implies in particular that $\Int_0^T (Y_{s^-}-S_{s^-})dK_s = 0.$ Actually
\begin{align*}
\Int_0^T (Y_{s^-}-S_{s^-})dK_s &= \Int_0^T (Y_{s^-}-S_{s^-})dK_s^c+\Int_0^T (Y_{s^-}-S_{s^-})dK_s^d\\
&= \Int_0^T (Y_{s^-}-S_{s})dK_s^c + \sum_{s\leq T}(Y_{s^-}-S_{s^-})\Delta K_s^d=0.
\end{align*}
The last term of the second equality is null since $K^d$ jumps only when $Y_{s^-}=S_{s^-}$. \ep
\end{Remark}
\noindent The main objective of this section is to prove the following theorem.
\begin{Theorem}
 Under the above hypotheses, the RBDSDE in Definition \ref{def:rbsde} has a unique solution $(Y,Z,K)$.
\label{solRBDSDE}
\end{Theorem}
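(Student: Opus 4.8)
The plan is to prove uniqueness by an energy estimate exploiting the Skorokhod flat-off condition, and existence by a Picard fixed-point scheme whose single step is a reflected BDSDE with coefficients frozen at the previous iterate; that frozen step is in turn solved through a Snell envelope after the backward integral has been removed by a change of variable.

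For uniqueness, let $(Y^1,Z^1,K^1)$ and $(Y^2,Z^2,K^2)$ be two solutions and write $\delta Y:=Y^1-Y^2$, $\delta Z:=Z^1-Z^2$, $\delta K:=K^1-K^2$, $\delta f_s:=f(s,Y^1_s,Z^1_s)-f(s,Y^2_s,Z^2_s)$ and similarly $\delta g_s$. Applying the generalized It\^o formula of Lemma \ref{lemma:ito} to $|\delta Y|^2$ and taking $\E$, the forward $B$-integral produces $+\E\int_t^T\|\widehat a_s^{1/2}\delta Z_s\|^2ds$ on the left while the backward $\W$-integral produces $+\E\int_t^T\|\delta g_s\|^2ds$ on the right, so that
$$\E[|\delta Y_t|^2]+\E\Int_t^T\|\widehat a_s^{1/2}\delta Z_s\|^2ds=\E\Int_t^T\|\delta g_s\|^2ds+2\E\Int_t^T\delta Y_s\delta f_sds+2\E\Int_t^T\delta Y_{s^-}d(\delta K_s).$$
The last term is nonpositive: by condition (iv) one has $\int(Y^i_{s^-}-S_{s^-})dK^i_s=0$, and since $Y^j\ge S$ this gives $\int\delta Y_{s^-}d(\delta K_s)\le 0$. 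Using the Lipschitz bound on $f$, the contraction bound $\|\delta g_s\|^2\le c|\delta Y_s|^2+\alpha\|\delta Z_s\|^2$, and Assumption \ref{ass}(iv) in the form $\alpha\|\delta Z_s\|^2\le(1-\lambda)\|\widehat a_s^{1/2}\delta Z_s\|^2$ (recall $\widehat a$ is bounded away from $0$ under $\P$), the $Z$-term on the right is absorbed into the left because $\lambda>0$, and a Gronwall argument forces $\delta Y\equiv 0$, whence $\delta Z\equiv 0$ and $\delta K\equiv 0$.

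For existence I build a contraction on $\D^2(\P)\times\H^2(\P)$. Given $(U,V)$ there, freeze $\tilde f_s:=f(s,U_s,V_s)$ and $\tilde g_s:=g(s,U_s,V_s)$ and consider the reflected BDSDE with these given processes and barrier $S$. Setting $\bar Y_t:=Y_t+\int_0^t\tilde g_s\cdot d\W_s$, $\bar S_t:=S_t+\int_0^t\tilde g_s\cdot d\W_s$ and $\bar\xi:=\xi+\int_0^T\tilde g_s\cdot d\W_s$ — the change of variable already used in the proof of Lemma \ref{Lemmaregularity} — eliminates the backward integral and reduces the problem to the driver-free reflected BSDE
$$\bar Y_t=\bar\xi+\Int_t^T\tilde f_sds-\Int_t^T Z_s\cdot dB_s+K_T-K_t,\quad \bar Y_t\ge\bar S_t,\ \ t\in[0,T],$$
with the same Skorokhod conditions, driven by $B$ alone under the fixed $\P$; note $\bar S$ is again c\`adl\`ag since the backward integral is continuous in $t$. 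This equation is solved by the Snell envelope $\bar Y_t=\esssup_{t\le\tau\le T}\E[\int_t^\tau\tilde f_sds+\bar S_\tau\mathbf{1}_{\tau<T}+\bar\xi\,\mathbf{1}_{\tau=T}\mid\Fc_t]$, whose Mertens decomposition yields the nondecreasing c\`adl\`ag $K$ with the continuous/purely-discontinuous split and the jump identity of condition (iv); transforming back gives $(Y,Z,K)$. Defining $\Phi(U,V):=(Y,Z)$ and repeating the energy computation above (with the Skorokhod cross-term again $\le 0$), one bounds $\E|\delta Y_t|^2+\E\int\|\widehat a^{1/2}\delta Z\|^2$ by $\E\int(|\delta U|^2+\|\delta V\|^2)$; the Lipschitz constant of $f$, the contraction constant $\alpha<1$ of $g$ and Assumption \ref{ass}(iv) make $\Phi$ a contraction on a short horizon (patched across $[0,T]$) or in an exponentially weighted norm, and its fixed point is the sought solution.

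The main obstacle will be the single driver-free step with a genuinely c\`adl\`ag barrier: constructing the Snell envelope of the RCLL process $\bar S$, proving its regularity, and above all reading off from its Mertens decomposition the exact predictable jump structure $\Delta K^d_t=(S_{t^-}-Y_t)^+\mathbf{1}_{[Y_{t^-}=S_{t^-}]}$ of condition (iv), following Hamad\`ene and Ouknine \cite{HO11}. One must also check that the change of variable and all a priori estimates remain compatible with the doubly stochastic filtration, and that the c\`adl\`ag regularity of $\bar S$ is genuinely preserved; the surrounding steps (comparison, a priori estimates, Gronwall) are routine adaptations of arguments already used in the paper.
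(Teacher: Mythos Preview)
Your proposal is correct and follows essentially the same route as the paper: reduce to the case where the coefficients are frozen, remove the backward $\W$-integral by the change of variable $\bar Y_t=Y_t+\int_0^t g_s\cdot d\W_s$, solve the resulting reflected BSDE with c\`adl\`ag barrier via the Snell envelope, and then run a Banach fixed-point to treat the full $(y,z)$-dependence; uniqueness is the standard energy estimate with the Skorokhod cross-term nonpositive.

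The only notable difference is in the frozen step: the paper does not invoke the Snell envelope and Mertens decomposition directly, but instead establishes them through a penalization scheme $\bar Y^n$ with driver $f(s)+n(\bar Y^n_s-\bar S_s)^-$, proving that $\bar Y^n$ increases to the Snell envelope of $\bar S_\tau\mathbf 1_{\{\tau<T\}}+\bar\xi\mathbf 1_{\{\tau=T\}}+\int_0^\tau f$, and only then reads off $(Z,K)$ from the super--martingale decomposition. This buys an explicit approximation and makes the $\Fc_t$-measurability of $Y$, $Z$, $K$ transparent (as limits of the penalized objects), at the cost of an extra step. Your direct appeal to the Snell envelope/Mertens decomposition \`a la Hamad\`ene--Ouknine is more economical and equally valid, provided you check --- as you note --- that the jump identity in condition (iv) and the $\Fc_t$-measurability survive the change of variable.
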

\noindent Before we start proving this theorem, let us establish the same result in the case where $f$ and $g$ do not depend on $y$ and $z$. More precisely, given $f$ and
$g$ such that $$\E\left[\Int_0^T |f(s)|^2ds\right]+\E\left[\Int_0^T \|g(s)\|^2ds\right] < +\infty$$
 and $\xi$ as above, consider the reflected BDSDE
\begin{align}
Y_t  = \xi +\Int_t^T f(s)ds +\Int_t^T g(s)\cdot d\W_s -\Int_t^T Z_s\cdot dB_s +K_T-K_t.
\label{RBDSDE1}
\end{align}
\begin{Proposition}
 There exists a unique triplet $(Y,Z,K)$ verifies conditions of Definition \ref{RBDSDE} and satisfies \reff{RBDSDE1}.
\end{Proposition}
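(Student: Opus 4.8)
The plan is to reduce \eqref{RBDSDE1} to a Snell envelope computation by absorbing the two generator integrals into the terminal value and into the obstacle, and then to recover $(Y,Z,K)$ through a Doob--Meyer decomposition combined with the martingale representation property enjoyed by the measures in $\overline{\Pc}_S$ (established at the beginning of Section~2). Throughout I would work under the fixed measure $\P$ and with the augmented, right--continuous version of the filtration $\G$.

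\textbf{Step 1 (Reduction to a zero--generator problem).} Writing $I_t:=\Int_0^tf(s)ds+\Int_0^tg(s)\cdot d\W_s$, set
$$\hat\xi:=\xi+I_T,\quad \hat S_t:=S_t+I_t,\quad \hat Y_t:=Y_t+I_t.$$
Since a backward It\^o integral against $\W$ may, after time reversal, be viewed as a continuous forward It\^o integral (as already used in Step~1 of the proof of Theorem~\ref{Thexistence}), the process $I$ is continuous and $\G$--adapted, so $\hat S$ and $\hat Y$ are c\`adl\`ag and $\G$--adapted, and the integrability of $\xi,f,g,S$ gives $\E^\P[|\hat\xi|^2]<+\infty$ and $\E^\P[\Sup_{0\le t\le T}(\hat S_t^+)^2]<+\infty$. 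A direct substitution shows that \eqref{RBDSDE1} together with conditions (iii)--(iv) of Definition~\ref{def:rbsde} is equivalent to finding $(\hat Y,Z,K)$ with
$$\hat Y_t=\hat\xi-\Int_t^TZ_s\cdot dB_s+K_T-K_t,\quad \hat Y_t\ge \hat S_t,\quad \hat Y_T=\hat\xi,$$
carrying the same Skorokhod conditions: because the shift $I$ is continuous, one has $\hat Y-\hat S=Y-S$ and $\hat S_{t^-}-\hat Y_t=S_{t^-}-Y_t$, while $K,K^c,K^d$ are literally unchanged.

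\textbf{Step 2 (Snell envelope and its decomposition).} Following the approach of Hamad\`ene and Ouknine \cite{HO11} for c\`adl\`ag obstacles, I would define $\hat Y$ as the Snell envelope
$$\hat Y_t:=\underset{\tau\in\Tc_{t,T}}{\esssup}\,\E^\P\Big[\hat S_\tau\indi_{\{\tau<T\}}+\hat\xi\,\indi_{\{\tau=T\}}\,\Big|\,\G_t\Big],$$
where $\Tc_{t,T}$ denotes the $\G$--stopping times valued in $[t,T]$. Classical Snell envelope theory yields that $\hat Y$ is the smallest square--integrable c\`adl\`ag $\G$--supermartingale of class $(\mathrm D)$ dominating the obstacle, with $\hat Y_T=\hat\xi$. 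Its Doob--Meyer decomposition reads $\hat Y_t=\hat Y_0+N_t-K_t$, with $N$ a square--integrable $\G$--martingale ($N_0=0$) and $K$ predictable, nondecreasing and null at $0$. The martingale representation property valid for every $\P\in\overline{\Pc}_S$ then furnishes a unique $\G$--predictable $Z$ with $N_t=\Int_0^tZ_s\cdot dB_s$; rearranging and using $\hat Y_T=\hat\xi$ recovers the backward equation of Step~1, and undoing the shift produces $(Y,Z,K)$.

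\textbf{Step 3 (Reflection, integrability, uniqueness).} The minimality built into the Snell envelope gives exactly that $K^c$ increases only on $\{\hat Y_{s^-}=\hat S_{s^-}\}$ and that $\Delta K^d_t=(\hat S_{t^-}-\hat Y_t)^+\indi_{\{\hat Y_{t^-}=\hat S_{t^-}\}}$; by the identities of Step~1 these are precisely conditions (iii)--(iv) of Definition~\ref{def:rbsde}. Applying It\^o's formula to $|\hat Y|^2$ and using the integrability secured in Step~1 yields the a priori bounds placing $Y\in\D^2(\P)$, $Z\in\H^2(\P)$ and $K\in\I^2(\P)$. Uniqueness is then immediate: any solution produces, after the transformation of Step~1, a c\`adl\`ag supermartingale dominating the same obstacle and reflected minimally on it, hence equal to the Snell envelope $\hat Y$; $Z$ is then unique by the martingale representation and $K$ by difference. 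The genuinely delicate point, and the part that extends \cite{HO11} to the present framework, is the rigorous transcription of Snell envelope theory into this doubly stochastic setting: one must ensure that the Doob--Meyer martingale part is carried entirely by $\Int Z\cdot dB$ (which is exactly why the $g\cdot d\W$ term was pushed into the obstacle and why the $\overline{\Pc}_S$--martingale representation is essential), and one must control the jump structure of $K^d$ for a merely c\`adl\`ag barrier.
\ep
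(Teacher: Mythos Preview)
Your approach and the paper's are built on the same core idea: absorb the backward integral $\int g\cdot d\W$ (and in your case also $\int f\,ds$) into the obstacle and terminal value, recast the problem as a Snell envelope in the filtration $\G$, and then recover $(Z,K)$ via Doob--Meyer plus the martingale representation enjoyed by every $\P\in\overline{\Pc}_S$. The paper reaches the Snell envelope through penalization---it solves the BDSDEs \eqref{RBDSDE2}, identifies each shifted $\bar Y^n$ as the Snell envelope of $(\bar S\wedge\bar Y^n)\indi_{[\cdot<T]}+\bar\xi\indi_{[\cdot=T]}$, and passes to the increasing limit---whereas you write the Snell envelope down directly. For the analytic content (existence, Skorokhod condition, uniqueness) the two routes are equivalent, and yours is slightly shorter.

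There is, however, one genuine gap in your route that the penalization is there to fill. In this appendix $\D^2(\P)$ requires $Y_t$ to be $\Fc_t=\Fc_t^B\vee\Fc_{t,T}^W$--measurable, not merely $\Gc_t$--measurable. Your Snell envelope $\hat Y_t$ is defined through $\G$--stopping times and $\Gc_t$--conditional expectations, so a priori it is only $\Gc_t$--measurable; after undoing the shift by $I_t$ (which itself depends on $W$ on $[0,t]$), nothing in your argument rules out residual dependence of $Y_t$ on $\Fc_{0,t}^W$. The paper obtains this refined measurability essentially for free: each $Y_t^n$ solves a genuine BDSDE and is therefore $\Fc_t$--measurable by \cite{pp1994}, so $Y_t=\lim_nY_t^n$ inherits it, and likewise $K_t=\lim_nK_t^n$; the measurability of $Z$ is then read off from the equation. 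If you want to keep the direct Snell envelope construction, you need to supply a separate argument for this $\Fc_t$--measurability, and that is precisely the doubly stochastic feature that distinguishes the present result from \cite{HO11}.
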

\begin{proof}
\textbf{ a) Existence:} The method combines penalization and the Snell envelope method. For each $n\in\N^*$, we set $$f_n(s,y) = f(s)+ n(y_s-S_s)^-,$$
and consider the BDSDE
\begin{align}
Y_t^n  = \xi^n +\Int_t^T f_n(s,Y_s^n)ds +\Int_t^T g(s)\cdot d\W_s -\Int_t^T Z_s^n\cdot dB_s.
\label{RBDSDE2}
\end{align}
It is well known (see Pardoux and Peng \cite{pp1994}) that BDSDE (\ref{RBDSDE2}) has a unique solution $(Y^n,Z^n)\in\D^2(\P)\times\H^2(\P)$
such that for each $n\in\N,$
$$\E\left[\underset{0\leq t\leq T}{\Sup}|Y_t^n|^2 + \Int_0^T \|Z_s^n\|^2ds\right] <+\infty.$$
From now on the proof will be divided into three steps.

\vspace{0.3em}
\noindent \textit{{\bf{Step 1}}}: For all $n\geq 0$ and $(s,y)\in[0,T]\times\R$,
$$f_n(s,y,z)\leq f_{n+1}(s,y,z),$$ which provide by the comparison theorem, $Y_t^n\leq Y_t^{n+1},~ t\in[0,T]~ a.s.$ For each $n\in\N,$  denoting
$$\bar{Y}_t^n := Y_t^n + \Int_0^t g(s) d\W_s,\ \bar{\xi}:= \xi + \Int_0^T g(s)\cdot d\W_s , \ \bar{S}_t := S_t + \Int_0^t g(s)\cdot d\W_s,$$
we have
\begin{align}
\bar{Y}_t^n= \bar{\xi}+\Int_t^T f(s)ds +n\Int_t^T (\bar{Y}_s^n - \bar{S}_s)^- ds -\Int_t^T Z_s^n\cdot dB_s.
\end{align}
The process $\bar{Y}_t^n$ satisfies
\begin{align}
\forall t\leq T,\ \bar{Y}_t^n= \underset{\tau \geq t}{\rm ess \, sup}\ \E\left[ \left.\Int_t^\tau f(s)ds + (\bar{Y}_\tau^n\wedge \bar{S}_\tau){\bf{1}}_{\{\tau < T\}}+ \bar{\xi}{\bf{1}}_{\{\tau =T\}}\right|\Gc_t\right].
\end{align}
In fact, for any $n\in\N$ and $t\leq T$ we have
\begin{align}
\bar{Y}_t^n= \bar{\xi}+\Int_t^T f(s)ds + n \Int_t^T (\bar{Y}_s^n-\bar{S}_s )^-ds -\Int_t^T \bar{Z}_s^n\cdot dB_s.
\label{penBDSDE}
\end{align}
Therefore for any $\G-$stopping time $\tau\geq t$ we have
\begin{align} \label{penaSnellenv}
\bar{Y}_t^n &=\E\left[\left.\bar{Y}_{\tau}^n+\Int_t^{\tau} f(s)ds + n \Int_t^{\tau} (\bar{Y}_s^n-\bar{S}_s )^-ds\right|\Gc_t\right]\nonumber\\
&\geq  \E\left[\left.(\bar{S}_{\tau}\wedge \bar{Y}_{\tau}^n){\bf{1}}_{[\tau < T]}+ \bar{\xi}{\bf{1}}_{\{\tau = T\}}+\Int_t^{\tau} f(s)ds\right|\Gc_t\right],
\end{align}
since $\bar{Y}_{\tau}^n\geq (\bar{S}_{\tau}\wedge \bar{Y}_{\tau}^n){\bf{1}}_{[\tau < T]}+ \bar{\xi}{\bf{1}}_{\{\tau = T\}}$. On the other hand, let $\tau_t^{*}$ be the stopping time defined as follows:
$$ \tau_t^{*} = \Inf \{s\geq t, \bar{K}_s^n-\bar{K}_t^n > 0\}\wedge T,$$
where $\bar{K}_t^n=n \Int_0^{t} (\bar{Y}_s^n-\bar{S}_s )^-ds$. Let us show that ${\bf{1}}_{[\tau_t^{*} < T]}\bar{Y}_{\tau_t^{*}}^n)= (\bar{S}_{\tau_t^{*}}\wedge \bar{Y}_{\tau_t^{*}}^n){\bf{1}}_{[\tau_t^{*} < T]}$.

\vspace{0.5em}
\noindent Let $\omega$ be fixed such that $\tau_t^{*}(\omega) < T$. Then there exists a sequence $(t_k)_{k\geq 0}$ of real numbers which decreases to $\tau_t^{*}(\omega)$ such that $\bar{Y}_{t_k}^n(\omega)\leq \bar{S}_{t_k}(\omega)$. As $\bar{Y}^n$ and $\bar{S}$ are RCLL processes then taking the limit as $k\rightarrow \infty$ we obtain 
$\bar{Y}_{\tau_t^{*}}^n\leq \bar{S}_{\tau_t^{*}}$ which implies ${\bf{1}}_{[\tau_t^{*} < T]}\bar{Y}_{\tau_t^{*}}^n)= (\bar{S}_{\tau_t^{*}}\wedge \bar{Y}_{\tau_t^{*}}^n){\bf{1}}_{[\tau_t^{*} < T]}$. Now from (\ref{penBDSDE}), we deduce that:
\begin{align*}
\bar{Y}_t^n &= \bar{Y}_{\tau_t^{*}}^n +\Int_t^{\tau_t^{*}} f(s)ds  -\Int_t^{\tau_t^{*}} \bar{Z}_s^n \cdot dB_s\\
&= (\bar{S}_{\tau_t^{*}}\wedge \bar{Y}_{\tau_t^{*}}^n){\bf{1}}_{[\tau_t^{*} < T]}+\bar{\xi}{\bf{1}}_{\{\tau_t^{*} = T\}}+ \Int_t^{\tau_t^{*}} f(s)ds  -\Int_t^{\tau_t^{*}} \bar{Z}_s^n\cdot dB_s.
\end{align*}
Taking the conditional expectation and using inequality (\ref{penaSnellenv}) we obtain: $\forall n\geq 0$, and $t\geq T$
\begin{align}
 \bar{Y}_t^n= \underset{\tau \geq t}{\rm ess \, sup}\ \E\left[\left. \Int_t^\tau f(s)ds + (\bar{Y}_\tau^n\wedge \bar{S}_\tau){\bf{1}}_{\{\tau < T\}}+ \bar{\xi}{\bf{1}}_{\{\tau =T\}}\right|\Gc_t\right].
\end{align}

\vspace{0.3cm}
\noindent \textit{{\bf{Step 2}}}: There exists a RCLL $(Y_t)_{t\leq T}$ of $\D^2(\P)$ such that $\P-a.s.$
\begin{itemize}
\item[(i)] $Y=\underset{n \rightarrow \infty}{\Lim} Y^n$ in $\H^2(\P)$, $S\leq Y$.

\vspace{0.5em}
\item[(ii)] for any $t\leq T,$
\begin{align}\label{Snellenv}
Y_t= \underset{\tau \geq t}{\rm ess \, sup}\ \E\left[\left. \Int_t^\tau f(s)ds + \bar{S}_\tau{\bf{1}}_{\{\tau < T\}}+ \bar{\xi}{\bf{1}}_{\{\tau =T\}}\right|\Gc_t\right]- \Int_0^t g(s)\cdot d\W_s.
\end{align}
\end{itemize}
Actually for $t\leq T$ let us set 
$$\tilde{Y}_t:= \underset{\tau \geq t}{\rm ess \, sup}\ \E\left[\left. \Int_t^\tau f(s)ds + \bar{S}_\tau{\bf{1}}_{\{\tau < T\}}+ \bar{\xi}{\bf{1}}_{\{\tau =T\}}\right|\Gc_t\right].$$
since $\bar{S}\in \D^2(\P)$, $f\in \H^2(\P)$ and $\bar{\xi}$ is square integrable, the process $\tilde{Y}$ belongs to $\D^2(\P)$. On the other hand for any $n\geq 0$ and $t\leq T$ we have $\bar{Y}_t^n\leq \tilde{Y}_t$. Thus there exist a $\G-$progressively measurable process $\bar{Y}$ such that $\P-a.s.$, for any $t\leq T, \ \bar{Y}_t^n\ \nearrow \bar{Y}_t\leq \tilde{Y}_t $ and we have $Y_t^n\nearrow Y_t= \bar{Y}_t-\Int_0^t g(s)\cdot d\W_s$, then $Y=\underset{n \rightarrow \infty}{\Lim} Y^n$ in $\H^2(\P).$ 

\vspace{0.5em}
\noindent Besides, the process $\bar{Y}_\cdot^n+\Int_0^\cdot f(s)ds$ is a c\`adl\`ag super--martingale as the Snell envelope of 
$$\left(\int_0^{\cdot} f(s)ds+\bar{S}_{\cdot}\wedge \bar{Y}_{\cdot}^n\right){\bf{1}}_{[\cdot< T]}+ \bar{\xi}{\bf{1}}_{\{\cdot = T\}},$$ and it converges increasingly to $\bar{Y}_\cdot+\int_0^\cdot f(s)ds$. It follows that the latter process is a c\`adl\`ag super--martingale. Hence, the process $Y$ is also $\G-$progressively measurable, c\`adl\`ag, and belongs to $\D^2(\P)$. Even more than that, $Y_t$ is $\Fc_t$-measurable for every $t\in[0,T]$ as the limit of  $Y_t^n$, which has this property.

\vspace{0.3em}
\noindent Next let us prove that $Y\geq S$. We have
$$\E[Y_0^n]=\E\left[\xi+\Int_0^Tf(s)ds\right]+\E\left[\Int_0^T n(Y_s^n-S_s)^-ds\right].$$
Dividing the two sides by $n$ and taking the limit as $n\rightarrow\infty$, we obtain 
$$\E\left[\Int_0^T (Y_s-S_s)^-ds\right]=0.$$ Since the processes $Y$ and $S$ are c\`adl\`ag, then, $\P-a.s.$, $Y_t\geq S_t, $ for $t< T$. But $Y_T=\xi\geq S_T$, therefore $Y\geq S$.

\vspace{0.3em}
\noindent Finally let us show that $Y$ satisfies (\ref{Snellenv}). But this is a direct consequence of the continuity of the Snell envelope through sequences of increasing c\`adl\`ag processes. In fact on the one hand, the sequence of increasing c\`adl\`ag processes $((\bar{S}_{t}\wedge \bar{Y}_{t}^n){\bf{1}}_{[t < T]}+ \bar{\xi}{\bf{1}}_{\{t = T\}})_{t\leq T})_{t\leq T}$ converges increasingly to the c\`adl\`ag process $(\bar{S}_{t}{\bf{1}}_{[t < T]}+ \bar{\xi}{\bf{1}}_{\{t = T\}})_{t\leq T}){[t\leq T}$ since $\bar{Y}_{t}\geq \bar{S}_{t}$. Therefore, 
$$\Int_0^tf(s)ds+\bar{Y}_{t}^n\longrightarrow \underset{\tau \geq t}{\rm ess \, sup}\ \E\left[\left. \Int_0^\tau f(s)ds + \bar{S}_\tau{\bf{1}}_{\{\tau < T\}}+ \bar{\xi}{\bf{1}}_{\{\tau =T\}}\right|\Gc_t\right]=\Int_0^tf(s)ds+\bar{Y}_{t},$$
which implies that 
\begin{align*}
Y_t &=\bar{Y}_t-\Int_0^t g(s)\cdot d\W_s= \underset{\tau \geq t}{\rm ess \, sup}\ \E\left[\left. \Int_t^\tau f(s)ds + \bar{S}_\tau{\bf{1}}_{\{\tau < T\}}+ \bar{\xi}{\bf{1}}_{\{\tau =T\}}\right|\Gc_t\right]- \Int_0^t g(s)\cdot d\W_s.
\end{align*}

\vspace{0.3em}
\noindent 
\textit{{\bf{Step 3}}}: We know from (\ref{Snellenv}) that the process $\int_0^\cdot f(s)ds+\bar{Y}_{\cdot}^n$ is a Snell envelope. Then, there exist a process $K\in\I^2(\P)$ and a $\G$-martingale such that
$$ \Int_0^tf(s)ds+Y_t+ \Int_0^t g(s)\cdot d\W_s = M_t-K_t ,\ 0\leq t\leq T.$$
Additionally $K=K^c+K^d$ where $K^c$ is continuous, non-decreasing and $K^d$ non-decreasing purely discontinuous predictable such that for any $t\leq T, \Delta_t K^d= (S_{t^-}-Y_t){\bf{1}}_{\{Y_{t^-}=S_{t^-}\}}$.
Now the martingale $M$ belongs to $\D^2(\P)$, so that the It\^o's martingale representation theorem implies the existence of a $\G$-predictable process $Z\in\H^2(\P)$ such that
$$M_t=M_0+\Int_0^t Z_s\cdot dB_s, \quad 0\leq t\leq T,\ \P-a.s.$$
Hence $$Y_t=Y_0- \Int_0^tf(s)ds-\Int_0^t g(s)\cdot d\W_s+\Int_0^t Z_s\cdot dB_s-K_t ,\ 0\leq t\leq T.$$
The proof of  $\Int_0^T (Y_{s}-S_{s})dK^c_s = 0$ is the same as in \cite{HO11}, so we omit it.

\vspace{0.3em}
\noindent It remains to show that ${Z_t}$ and ${K_t}$ are in fact $\Fc_t-$measurable. For $K_t$, it is obvious since it is the limit of $K_t^n= \Int_0^t n(Y_s^n-S_s)^- ds$ which is $\Fc_t-$measurable for each $t\leq T$. Now 
$$ \Int_t^T Z_s\cdot dB_s = \xi +\Int_t^T f(s,Y_s,Z_s)ds +\Int_t^T g(s,Y_s,Z_s)\cdot d\W_s - Y_t+K_T-K_t,$$
and the right side is $\Fc_T^B\vee\Fc_{t,T}^{W}$-measurable. Hence from the It\^o's martingale representation theorem $(Z_s)_{t\leq s\leq T}$ is $\Fc_s^B\vee\Fc_{t,T}^{W}$ adapted. Consequently $Z_s$ is $\Fc_s^B\vee\Fc_{t,T}^{W}$-measurable for any $t<s$, so it is $\Fc_s^B\vee\Fc_{s,T}^{W}$-measurable.

\vspace{0.3em}
\vspace{0.3em}
\noindent \textbf{b) Uniqueness:} Under Lipschitz continuous conditions, the proof of uniqueness is standard in BSDE theory (see e.g. proof of Proposition 2.1. in \cite{AM10}).
\ep
\end{proof}

\vspace{0.5em}
\noindent The existence of solution of RBDSDE in Theorem \ref{solRBDSDE} is obtained via a standard fixed Banach point theorem for reflected BSDEs (see for instance El Karoui, Hamad\`ene and Matoussi \cite{ElkMH08}).

\end{appendix}
\bibliographystyle{acm}
\bibliography{Biblio-2BDSDE}

\end{document}